\documentclass[12pt]{amsart}

\setcounter{secnumdepth}{1}
\usepackage[matrix,arrow,curve,frame]{xy}
\usepackage{amsmath,amsthm,amssymb,enumerate}
\usepackage{latexsym}
\usepackage{amscd}
\usepackage[colorlinks=false]{hyperref}
\usepackage[mathscr]{eucal}
\usepackage{color}

\setlength{\oddsidemargin}{0in} \setlength{\evensidemargin}{0in}
\setlength{\marginparwidth}{0in} \setlength{\marginparsep}{0in}
\setlength{\marginparpush}{0in} \setlength{\topmargin}{0in}
\setlength{\headheight}{0pt} \setlength{\headsep}{0pt}
\setlength{\footskip}{.3in} \setlength{\textheight}{9.2in}
\setlength{\textwidth}{6.5in} \setlength{\parskip}{4pt}

\newtheorem{thm}[subsection]{Theorem}

\newtheorem{prop}[subsection]{Proposition}

\newtheorem{cor}[subsection]{Corollary}
\newtheorem{lemma}[subsection]{Lemma}
\newtheorem{conj}[subsection]{Conjecture}
\newtheorem{remark}[subsection]{Remark}

\theoremstyle{definition}

\numberwithin{equation}{section}

\def\C{\mathbb {C}}

\def\Z{\mathbb Z}
\def\Q{\mathbb Q}

\def\phi{{\varphi}}

\def\ra{\rightarrow}

\def\cA{{\mathcal A}}
\def\cB{{\mathcal B}}
\def\cC{{\mathcal C}}
\def\cD{{\mathcal D}}
\def\cE{{\mathcal E}}
\def\cF{{\mathcal F}}
\def\cG{{\mathcal G}}
\def\cH{{\mathcal H}}
\def\cI{{\mathcal I}}
\def\cJ{{\mathcal J}}

\def\cO{{\mathcal O}}

\def\cS{{\mathcal S}}
\def\cT{{\mathcal T}}

\def\cV{{\mathcal V}}
\def\cW{{\mathcal W}}


\def\gg{{\mathfrak g}}

\def\gl{{\mathfrak l}}

\def\go{{\mathfrak o}}
\def\gp{{\mathfrak p}}

\def\gs{{\mathfrak s}}


\newcommand{\mf}{\mathfrak}
\newcommand{\on}{\operatorname}

\newcommand{\fing}{\mf{g}}

\newfont{\german}{eufm10}

\begin{document}
\pagestyle{plain}

\title
{Orbifolds and cosets of minimal $\cW$-algebras}

\author{T. Arakawa}
\address{Research Institute for Mathematical Sciences, Kyoto University}
\email{arakawa@kurims.kyoto-u.ac.jp}
\thanks{T. A. is supported by JSPS KAKENHI Grants
\#25287004 and \#26610006}

\author{T. Creutzig} 
\address{University of Alberta}
\email{creutzig@ualberta.ca}
\thanks{T. C. is supported by NSERC Discovery Grant \#RES0019997}

\author{K. Kawasetsu} 
\address{Institute of Mathematics, Academia Sinica}
\email{kawasetsu@gate.sinica.edu.tw}
\thanks{K. K. is partially supported by JSPS KAKENHI Grant \#14J09236.}

\author{A. Linshaw} 
\address{University of Denver}
\email{andrew.linshaw@du.edu}
\thanks{A. L. is supported by Simons Foundation Grant \#318755}

\begin{abstract} Let $\gg$ be a simple, finite-dimensional Lie (super)algebra equipped with an embedding of $\gs\gl_2$ inducing the minimal gradation on $\gg$. The corresponding minimal $\cW$-algebra $\cW^k(\gg, e_{-\theta})$ introduced by Kac and Wakimoto has strong generators in weights $1,2,3/2$, and all operator product expansions are known explicitly. The weight one subspace generates an affine vertex (super)algebra $V^{k'}(\gg^{\natural})$ where $\gg^{\natural} \subset \gg$ denotes the centralizer of $\gs\gl_2$. Therefore $\cW^k(\gg, e_{-\theta})$ has an action of a connected Lie group $G^{\natural}_0$ with Lie algebra $\gg^{\natural}_0$, where $\gg^{\natural}_0$ denotes the even part of $\gg^{\natural}$. We show that for any reductive subgroup $G \subset G^{\natural}_0$, and for any reductive Lie algebra $\gg' \subset \gg^{\natural}$, the orbifold $\cO^k = \cW^k(\gg, e_{-\theta})^{G}$ and the coset $\cC^k = \text{Com}(V(\gg'),\cW^k(\gg, e_{-\theta}))$ are strongly finitely generated for generic values of $k$. Here $V(\gg')$ denotes the affine vertex algebra associated to $\gg'$. We find explicit minimal strong generating sets for $\cC^k$ when $\gg' = \gg^{\natural}$ and $\gg$ is either $\gs\gl_n$, $\gs\gp_{2n}$, $\gs\gl(2|n)$ for $n\neq 2$, $\gp\gs\gl(2|2)$, or $\go\gs\gp(1|4)$. Finally, we conjecture some surprising coincidences among families of cosets $\cC_k$ which are the simple quotients of $\cC^k$, and we prove several cases of our conjecture. \end{abstract}

\maketitle
\section{Introduction}
Vertex algebras arose out of conformal field theory in the 1980s and have been developed mathematically from various points of view in the literature \cite{B,FLM,K}. In addition to several well-known families such as free field algebras, affine vertex algebras, lattice vertex algebras and $\cW$-algebras, there are some standard ways to construct new vertex algebras from old ones. The {\it orbifold construction} begins with a vertex algebra $\cV$ and a group of automorphisms $G$, and considers the invariant subalgebra $\cV^G$ and its extensions. Similarly, the {\it coset construction} associates to a vertex algebra $\cV$ and a subalgebra $\cA$, the subalgebra $\text{Com}(\cA,\cV) \subset \cV$ which commutes with $\cA$ \cite{FZ}. It is believed that nice properties of $\cV$ such as strong finite generation, $C_2$-cofiniteness, and rationality, will be inherited by $\cV^G$ and $\text{Com}(\cA,\cV)$ if $G$ and $\cA$ are also nice, but few general results of this kind are known.

The structure of orbifolds of free field algebras has been studied in a series of papers using ideas from classical invariant theory \cite{LI,LII,LIII,LIV,LV,CLIII}. The main result is that for a free field algebra $\cF$ (either a Heisenberg algebra, free fermion algebra, symplectic fermion algebra, or $\beta\gamma$-system), and a reductive group $G$ of automorphisms of $\cF$, the orbifold $\cF^G$ is strongly finitely generated. The proof is essentially constructive and proceeds by first finding an explicit strong generating set for $\cF^{\text{Aut}(\cF)}$. Here $\text{Aut}(\cF)$ denotes the full automorphism group, which is either an orthogonal or symplectic group. The description of $\cF^{\text{Aut}(\cF)}$ is a formal consequence of Weyl's first and second fundamental theorems of invariant theory for these groups \cite{W}. The strong finite generation of $\cF^{\text{Aut}(\cF)}$ implies that all $\cF^{\text{Aut}(\cF)}$-modules appearing in the decomposition of $\cF$ have the $C_1$-cofiniteness property according to Miyamoto's definition \cite{Mi}. This fact, together with a classical result (Theorem 2.5A of \cite{W}), allows us to deduce the strong finite generation of $\cF^G$ for a general reductive group $G \subset \text{Aut}(\cF)$. 

It was observed in \cite{LIV} that similar results for orbifolds of affine vertex algebras at generic level can be obtained by a deformation process. A large family of cosets of affine vertex algebras inside larger structures can also be studied using this process. Let $\cA^k$ be a family of vertex algebras whose structure constants depend continuously on a complex parameter $k$. If $\cA^k$ contains a universal affine vertex algebra $V^k(\gg)$ of some simple Lie algebra $\gg$ at level $k$, let $$\cC^k = \text{Com}\left(V^k(\gg), \cA^k\right).$$
The problem of describing $\cC^k$ can be formulated and answered using the language of {\it deformable families} of vertex algebras. A deformable family $\cA$ associated to $\cA^k$ is roughly speaking a vertex algebra defined over the ring $F_K$. Here $K$ is a subset of $\mathbb{C}$ which is at most countable, and $F_K$ is the ring of complex-valued rational functions in a variable $\kappa$ of degree at most zero, with possible poles along $K$. One requires that $\cA/(\kappa-k)\cong \cA^k$ for $k\notin K$. This notion is due to \cite{CLI, CLII} and it allows us to find strong generators of $\cC^k$ for generic $k$ by passing to the limit $\lim_{k\rightarrow \infty} \cC^k$. This limit can then be identified with an orbifold of a free field algebra, which can be studied using the above methods. 

If $T = \{a_1,\dots, a_r\}$ is a strong generating set for $\text{Com}(V^k(\gg), \cA^k)$ for generic values of $k$, the set of {\it nongeneric} values of $k$, where $T$ fails to strongly generate $\text{Com}(V^k(\gg), \cA^k)$, can be described as follows. It contains a set of rational numbers $k$ with $k+ h^{\vee} \leq 0$ where $h^{\vee}$ is the dual Coxeter number of $\gg$, and has at most finitely many additional elements. These arise as poles of the structure constants appearing in the OPEs of $a_i(z) a_j(w)$, for $i,j = 1,\dots, r$. In particular, there are at most finitely many real numbers $k > -h^{\vee}$ that are nongeneric, and they can be determined from the OPE algebra of the generators. This method can be easily modified to handle the case when $\gg$ is reductive, i.e., a sum of simple and abelian ideals; see Corollary 6.11 of \cite{CLII}.

\subsection{Minimal $\cW$-algebras}
We shall adapt these methods to study orbifolds and cosets of the minimal $\cW$-algebra $\cW^k(\gg,e_{-\theta})$ introduced by Kac and Wakimoto in \cite{KWI}. Here $\gg$ is a simple, finite-dimensional Lie (super)algebra equipped with an $\gs\gl_2$-triple $\{e,f,h\}$ inducing the minimal $\frac{1}{2} \mathbb{Z}$-gradation 
$$\gg = \gg_{-1} \oplus \gg_{-1/2} \oplus \gg_0 \oplus \gg_{1/2} \oplus \gg_1.$$ In this notation, $\gg_{-1} = \mathbb{C} f$ and $\gg_1 = \mathbb{C} e$, and the above decomposition is the eigenvalue decomposition with respect to $\text{ad}(h)$ where $h = [e,f]$. Define $\gg^{\natural}$ to be the centralizer of $\{e,f,h\}$ in $\gg$. By Theorem 5.1 of \cite{KWI}, $\cW^k(\gg, e_{-\theta})$ is strongly generated by a Virasoro field $T$, primary weight one fields $J^a$, $a\in \gg^{\natural}$, which generate an affine vertex (super)algebra $V^{k'}(\gg^{\natural})$, and primary weight $3/2$ fields $G^u$, $u \in \gg_{-1/2}$. Moreover, all OPE relations are given explicitly in this theorem. 

In Section \ref{sect:defandlim}, we show that $\lim_{k\ra \infty} \cW^k(\gg,e_{-\theta})$ is a {\it generalized free field algebra} in an appropriate sense. The orbifold theory of generalized free field algebras can be studied using classical invariant theory, and orbifolds and cosets of minimal $\cW$-algebras for generic values of $k$ can be studied by passing to the limit. The even part $\gg_0^{\natural} \subset \gg^{\natural}$ is reductive and $\cW^k(\gg, e_{-\theta})$ decomposes into a sum of finite-dimensional $\gg_0^{\natural}$-modules. Therefore the action of $\gg_0^{\natural}$ lifts to an action of a choice of connected Lie group $G^{\natural}_0$ on $\cW^k(\gg, e_{-\theta})$ by automorphisms.

\begin{thm} Let $G \subset G^{\natural}_0$ be a reductive group of automorphisms of $\cW^k(\gg, e_{-\theta})$. Then the orbifold $\cW^k(\gg, e_{-\theta})^G$ is strongly finitely generated for all but finitely many values of $k$.
\end{thm}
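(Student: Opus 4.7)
The plan is to adapt the \emph{deformable family plus classical invariant theory} strategy of \cite{LI,LII,LIII,LIV,LV,CLI,CLII,CLIII}, which treats free field orbifolds and affine cosets, to the minimal $\cW$-algebra setting. First I would organize the algebras $\cW^k(\gg,e_{-\theta})$ into a deformable family $\cW$ over an appropriate ring $F_K$ of rational functions in a parameter $\kappa$, by rescaling the Kac--Wakimoto generators $T$, $J^a$ ($a\in\gg^{\natural}$), $G^u$ ($u\in\gg_{-1/2}$) so that the explicit OPE coefficients from Theorem 5.1 of \cite{KWI} become regular at $\kappa=\infty$; then $\cW/(\kappa-k)\cW \cong \cW^k(\gg,e_{-\theta})$ for each $k\notin K$. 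The action of $G^{\natural}_0$ is defined at the level of the family, since it acts linearly on the generating vectors and preserves the OPE relations.

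Second, using Section \ref{sect:defandlim} I identify the limit $\cW^\infty := \lim_{k\ra\infty}\cW^k(\gg,e_{-\theta})$ as a generalized free field algebra: the rescaled $J^a$ degenerate to an (abelian) Heisenberg current algebra on $\gg^{\natural}$, the $G^u$ become decoupled weight $3/2$ fields whose nontrivial OPEs contract to the quadratic invariant pairing on $\gg_{-1/2}$, and $T$ becomes a Sugawara / free-field type Virasoro element built from these generators. The $G^{\natural}_0$-action on $\cW$ descends to $\cW^\infty$ and acts through its natural representation on the finite-dimensional generating spaces.

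Third, the main step: prove that the orbifold $(\cW^\infty)^G$ is strongly finitely generated for any reductive $G\subset G^{\natural}_0$. Since $\cW^\infty$ is generalized free, one embeds $G$ in a classical group $\tilde G$ (a product of orthogonal, symplectic, and general linear factors) which preserves the invariant pairings on the weight $1$ and weight $3/2$ generating spaces. Weyl's first and second fundamental theorems \cite{W} then give a finite strong generating set for $(\cW^\infty)^{\tilde G}$ --- essentially the normal-ordered quadratic invariants and finitely many of their derivatives, as in the free field case. Since $G\subset\tilde G$ is reductive, Theorem 2.5A of \cite{W}, combined with the $C_1$-cofiniteness argument of \cite{Mi} exactly as in \cite{LI,LII,LIII,LIV,LV,CLIII}, transfers strong finite generation from $(\cW^\infty)^{\tilde G}$ to $(\cW^\infty)^{G}$. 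The main obstacle here is verifying that the generalized-free-field structure of $\cW^\infty$ is genuinely invariant-theoretic in this sense: the $J^a$-sector and $G^u$-sector are coupled in $\cW^k$ through Kac--Wakimoto OPEs, and one must confirm that after the rescaling these couplings either decouple in the limit or remain compatible with a classical group action on the generating spaces.

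Finally, I invoke the transfer principle of \cite{CLI,CLII}: a finite strong generating set $\{\bar a_1,\ldots,\bar a_r\}$ of $(\cW^\infty)^G$ lifts to elements $a_1,\ldots,a_r$ of $\cW^G$, and these lifts strongly generate $(\cW^k(\gg,e_{-\theta}))^G$ for all but finitely many $k$. The exceptional values consist of the poles of the rational structure constants appearing when one reconstructs higher conformal weight $G$-invariants in $\cW^k$ as normal-ordered polynomials in the $a_i$, together with the finitely many bad rational levels $k$ with $k+h^{\vee}\le 0$ inherited from the embedded affine subalgebra $V^{k'}(\gg^{\natural})$, exactly as described in the introduction.
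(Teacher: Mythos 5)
Your proposal follows essentially the same route as the paper: rescale the Kac--Wakimoto generators to form a deformable family whose $\kappa\to\infty$ limit is a tensor product of generalized free field algebras, establish strong finite generation of the reductive-group orbifold of that limit via Weyl's theorems together with $C_1$-cofiniteness of the relevant modules, and transfer a finite strong generating set back to $\cW^k(\gg,e_{-\theta})^G$ away from finitely many exceptional $k$. Two small corrections: in the limit the Virasoro field becomes an \emph{independent} tensor factor $\cT$ with $t(z)t(w)\sim(z-w)^{-4}$ (on which $G$ acts trivially), not a Sugawara element built from the other generators; and for the orbifold (unlike the coset) the exceptional set is just $\{0\}$ together with the finite set $D$ of poles of the structure constants of the lifted generators --- the levels with $k'+h^{\vee}\le 0$ coming from $V^{k'}(\gg^{\natural})$ play no role here.
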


\begin{thm} Let $\gg' \subset \gg^{\natural}$ be a reductive Lie subalgebra and let $V(\gg')\subset V^{k'}(\gg^{\natural})$ be the corresponding affine vertex subalgebra. Then the coset $\cC^k =  \text{Com}(V(\gg'), \cW^k(\gg, e_{-\theta}))$ is strongly finitely generated for generic values of $k$.
\end{thm}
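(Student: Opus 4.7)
The plan is to reduce Theorem 2 to an invariant-theoretic question about $\cW_\infty := \lim_{k \to \infty} \cW^k(\gg, e_{-\theta})$, following the deformable-families approach of \cite{CLI,CLII}. First, I would assemble $\cW^k(\gg, e_{-\theta})$ into a deformable family $\cW$ over the ring $F_K$, where $K$ is the countable set of rational values of $k$ at which the structure constants from the explicit OPEs of Theorem 5.1 of \cite{KWI} develop poles, so that $\cW/(\kappa-k)\cong \cW^k(\gg, e_{-\theta})$ for $k \notin K$. By the result of Section \ref{sect:defandlim}, $\cW_\infty$ is a generalized free field algebra. After the standard rescaling of currents familiar from the free-field limit of affine vertex algebras, the subalgebra $V^k(\gg') \subset \cW^k$ extends to a deformable subfamily whose limit is a Heisenberg algebra $\cH(\gg')$ sitting inside $\cW_\infty$, with OPEs determined purely by the invariant bilinear form on $\gg'$.

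Next, I would identify $\lim_{k\to\infty}\cC^k$ with $\text{Com}(\cH(\gg'), \cW_\infty)$. The inclusion $\lim_{k\to\infty}\cC^k \subseteq \text{Com}(\cH(\gg'), \cW_\infty)$ is automatic. For the reverse inclusion, the key input is that $\cW^k(\gg, e_{-\theta})$ decomposes into finite-dimensional $\gg^{\natural}_0$-modules of bounded multiplicity in each conformal weight space, so that the coset condition in each weight space is a finite linear system over $F_K$ whose rank is preserved in the limit for all but finitely many additional bad values of $k$. The commutant $\text{Com}(\cH(\gg'), \cW_\infty)$ is characterized as the joint kernel of the zero modes of the Heisenberg generators, and therefore equals the $\gg'$-invariant subalgebra $\cW_\infty^{\gg'} = \cW_\infty^{G'}$, where $G'$ is a connected reductive Lie group with Lie algebra $\gg'$; since $\gg'$ is a reductive Lie algebra (not superalgebra), it sits inside $\gg^{\natural}_0$ and $G' \subset G^{\natural}_0$.

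Since $\cW_\infty$ is a generalized free field algebra and $G' \subset G^{\natural}_0$ is reductive, the invariant-theoretic argument that establishes Theorem 1 applies essentially verbatim to $\cW_\infty$ and shows that $\cW_\infty^{G'}$ is strongly finitely generated; this argument combines Weyl's first and second fundamental theorems with the passage from $\text{Aut}(\cF)$-invariants to $G'$-invariants via Theorem 2.5A of \cite{W}, as carried out in \cite{LI,LII,LIII,LIV,LV,CLIII}. Finally, the transfer principle for deformable families (Corollary 6.11 of \cite{CLII}) lifts any finite strong generating set for $\lim_{k\to\infty}\cC^k$ to a strong generating set of $\cC^k$ for all but the finite set of $k$ at which additional poles arise in the OPE structure constants among the lifted generators. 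This yields strong finite generation of $\cC^k$ for generic $k$.

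The main obstacle is the identification $\lim_{k\to\infty}\cC^k = \cW_\infty^{G'}$: the nontrivial direction reduces to showing that the deformable subfamily $\cC \subset \cW$ cut out by the commutant condition is a free $F_K$-module whose fiber at $\kappa = \infty$ has the expected rank, i.e., that the limit of coset elements recovers every invariant in $\cW_\infty$. Once this flatness/rank-preservation is in place, the remaining steps---strong finite generation of orbifolds of generalized free field algebras, and transfer of generators from the limit back to generic $k$---follow from the frameworks already developed in \cite{LI,LII,LIII,LIV,LV,CLIII,CLI,CLII} and mirror the reasoning behind Theorem 1.
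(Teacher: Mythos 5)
Your overall strategy coincides with the paper's: build the deformable family $\cW$ over $F_K$, identify $\cW^{\infty}=\lim_{k\ra\infty}\cW^k(\gg,e_{-\theta})$ as a generalized free field algebra, compute the limit of the coset as an orbifold of a generalized free field algebra, prove strong finite generation of that orbifold by classical invariant theory, and transfer a finite strong generating set back to generic $k$ via Lemma \ref{passage} and Corollary \ref{passagecor}. This is exactly the content of Theorems \ref{propertiesofc} and \ref{mainthm:cosetkd} and the results of \cite{CLII} they cite.

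The genuine problem is your identification of the limit, which is wrong on both counts, and the two errors do not cancel. Write $\cW^{\infty}\cong\cH(d)\otimes\tilde{\cW}$, where $\cH(d)$, $d=\dim\gg'$, is the Heisenberg algebra generated by the limits $\alpha^a$ of the rescaled currents $\tilde{J}^a=\frac{1}{\sqrt{k}}J^a$, $a\in\gg'$, and $\tilde{\cW}$ is the tensor complement. First, $\text{Com}(\cH(d),\cW^{\infty})=\tilde{\cW}$ with no invariance condition whatsoever: the zero modes $\alpha^a_{(0)}$ annihilate the entire vacuum module, so they impose nothing, while the modes $\alpha^a_{(n)}$, $n\geq 1$, cut out exactly the tensor complement. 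Your assertion that this commutant is ``the joint kernel of the zero modes of the Heisenberg generators'' and hence equals $(\cW^{\infty})^{G'}$ conflates two different conditions. Second, $(\cW^{\infty})^{G'}$ is also not the limit of $\cC^k$: it contains the $G'$-invariants of the Heisenberg factor $\cH(d)$ itself under the adjoint action (the quadratic Casimir field when $\gg'$ is simple, all of $\cH(d)$ when $\gg'$ is abelian), and these do not arise as limits of coset elements. The correct statement, which is Theorem 6.8 of \cite{CLII} and is what the paper invokes, is $\lim_{k\ra\infty}\cC^k\cong\text{Com}(\cH(d),\cW^{\infty})^{G'}=\tilde{\cW}^{G'}$: in the limit the conditions $\tilde{J}^a_{(n)}v=0$ for $n\geq 1$ become the Heisenberg commutant condition, while the $n=0$ condition survives separately as $G'$-invariance, because it is the zero mode of the unrescaled current $J^a=\sqrt{k}\,\tilde{J}^a$ that generates the $\gg'$-action and this action does not degenerate as $k\ra\infty$. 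Proving that every element of $\tilde{\cW}^{G'}$ is actually attained as a limit is the flatness statement you correctly flag as the main obstacle; it is precisely Theorem 6.8 of \cite{CLII}, and the paper does not reprove it. Your invariant-theoretic step goes through unchanged for $\tilde{\cW}^{G'}$, since $G'$ still preserves the tensor factors of $\tilde{\cW}$, so the argument is repaired simply by replacing your limit with the correct one; but as written, a generating set for $(\cW^{\infty})^{G'}$ contains spurious elements that do not correspond to elements of $\cC^k$.
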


We are mostly interested in the case $\gg' = \gg^{\natural}$ and $V(\gg') = V^{k'}(\gg^{\natural})$, which we consider in Section \ref{sect:genericbehavior}. We give explicit minimal strong generating sets for $\cC^k$ when $\gg$ is either $\gs\gl_n$, $\gs\gp_{2n}$, $\gs\gl(2|n)$ for $n\neq 2$, $\gp\gs\gl(2|2)$, or $\go\gs\gp(1|4)$.

\begin{thm} \label{intro:genericstructures} 
For generic values of $k$, we have the following.
\begin{itemize}
\item[(a)] For $n\geq 3$, $\text{Com}(V^{k+1}(\gg\gl_{n-2}),\cW^k(\gs\gl_n, e_{-\theta}))$ is of type $\cW(2,3,\dots, n^2-2)$.
\item[(b)] For $n\geq 2$, $\text{Com}(V^{k+1/2}(\gs\gp_{2n-2}),\cW^k(\gs\gp_{2n}, e_{-\theta}))$ is of type $\cW(2,4,\dots, 2n^2+2n-2)$.
\item[(c)] For $n\neq 2$, $\text{Com}(V^{k-1}(\gg\gl_n), \cW^k(\gs\gl(2|n), e_{-\theta}))$ is even and of type $\cW(2,3,\dots, 3n+2)$.
\item[(d)] $\text{Com}(V^{k-1}(\gs\gl_2), \cW^k(\gp\gs\gl(2|2), e_{-\theta}))$ is even and of type $\cW(2,3^3,4, 5^3, 6, 7^3, 8)$.
\item[(e)] $\text{Com}(V^{k+1/2}(\go\gs\gp(1|2)), \cW^k(\go\gs\gp(1|4), e_{-\theta}))$ is even and of type $\cW(2,4,\dots, 18)$.
\end{itemize}
\end{thm}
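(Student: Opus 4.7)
The plan is to combine the deformable-family / free-field-limit method outlined above with classical (super) invariant theory, applied case by case. By the preceding theorem, $\cC^k$ is already known to be strongly finitely generated at generic $k$; what remains is to exhibit an explicit minimal strong generating set. The first step is to pass to the limit $\cW^{\infty}:=\lim_{k\to\infty}\cW^k(\gg,e_{-\theta})$, a generalized free field algebra. After the Heisenberg rescaling, $V^{k'}(\gg^{\natural})$ limits to a Heisenberg (super)algebra $\cH(\gg^{\natural})$ of rank $\dim\gg^{\natural}$, while the weight-$3/2$ generators $G^{u}$ limit to free fields whose OPEs are determined by the invariant $\gs\gl_2$-equivariant pairing on $\gg_{-1/2}$. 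This yields a tensor decomposition
\[
\cW^{\infty}\;\cong\;\cH(\gg^{\natural})\otimes\cF,
\]
in which $\cF$ is a $\beta\gamma$-system in the bosonic cases (a)--(b) and a tensor product of $\beta\gamma$-systems with symplectic or free fermions in the super cases (c)--(e). The zero modes of $\cH(\gg^{\natural})$ generate the $G^{\natural}_0$-action on $\cF$, so the limit of the coset is identified with $\cF^{G^{\natural}_0}$.

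Second, I would apply the appropriate version of Weyl's first and second fundamental theorems to $\cF^{G^{\natural}_0}$. In each case $\gg^{\natural}$ acts on $\gg_{-1/2}$ as a sum of standard, dual, and (in the super cases) odd representations, so FFT furnishes quadratic strong generators of the form $\omega_{a,b}=\sum_i{:}\partial^a X^i\,\partial^b Y^i{:}$ (plus fermionic analogues); the reduced family $\omega_{0,j}$, $j\ge 0$, together with derivatives and normally ordered products, spans $\cF^{G^{\natural}_0}$. SFT, refined by the $k$-dependent relations that survive in the deformable family, supplies a first decoupling relation at a specific weight, beyond which every $\omega_{0,j}$ is expressible as a normally ordered polynomial in lower-weight generators and their derivatives. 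Cases (a) and (b) essentially specialize the vertex-algebra invariant theory of \cite{LIII,LV,CLII} for $\GL_{n-2}$ and $\Sp_{2n-2}$ respectively; the super cases (c)--(e) require the analogous super Weyl statements for $\gg\gl_n$ on a mixed boson--fermion module and for $\go\gs\gp(1|2)$ on $\gg_{-1/2}$. The precise weight at which the first decoupling enters in each case gives the cutoffs $n^2-2$, $2n^2+2n-2$, $3n+2$, $8$, and $18$ claimed in the theorem.

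Third, I would lift the generating set back to generic $k$. For each $\omega_{0,j}$, choose a normally ordered polynomial lift in $T$, the $J^a$, and the $G^u$, defined over the base ring $F_K$; such a lift automatically lies in $\cC^k$ for every $k\notin K$, since membership in the commutant is cut out by linear equations with coefficients rational in $k$. Linear independence of the lifts, hence strong generation, at generic $k$ follows from linear independence in the limit together with continuity of the structure constants; and any hypothetical decoupling relation at lower weight would specialize to a relation in $\cF^{G^{\natural}_0}$, contradicting minimality there.

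The main obstacle will be the super invariant theory underlying (c)--(e): the relevant FFT/SFT for $\gg\gl_n$ on mixed boson--fermion modules and for $\go\gs\gp(1|2)$ on its standard module are not standard references, so one must work out these super Weyl theorems by hand and pin down the exact weight of the first decoupling. Case (d) is particularly delicate, since several generators share the same conformal weight (triples at weights $3$, $5$, and $7$) and minimality must be verified weight by weight via explicit OPE identities in $\cW^k(\gp\gs\gl(2|2),e_{-\theta})$.
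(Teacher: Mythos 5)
Your overall strategy---pass to the $k\to\infty$ limit of the deformable family, identify the limit of the coset as an orbifold of a free-field-type algebra, apply first and second fundamental theorems of invariant theory, and lift the generators back to generic $k$---is exactly the paper's. However, you misidentify the limit algebra in two ways that would derail the computation of the weights, and the weights are the entire content of the theorem. First, after the rescaling $\tilde{G}^u = \frac{1}{k}G^u$, the weight-$3/2$ fields do \emph{not} limit to a $\beta\gamma$-system (nor to symplectic or free fermions in the odd case): the only surviving structure constant is the \emph{third-order} pole determined by $\langle\cdot,\cdot\rangle_{\text{ne}}$, so the limit is the generalized free field algebra $\cG_{\text{ev}}(r)$ (resp.\ $\cG_{\text{odd}}(s)$) with generators of conformal weight $3/2$ and OPE $a^i(z)b^j(w)\sim\delta_{i,j}(z-w)^{-3}$---equivalently, the subalgebra of a $\beta\gamma$-system (resp.\ free fermion algebra) generated by the \emph{derivatives} of its generators. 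This matters: the quadratic invariants $:\partial^iX\,\partial^jY:$ then sit in weight $3+i+j$ rather than $1+i+j$, and it is Theorems \ref{thm:sp-invariant}, \ref{thm:gl-invariant} and \ref{thm:o-invariant} about these weight-$3/2$ algebras, not the known results on $\beta\gamma$ or fermion orbifolds, that produce the cutoffs $n^2-2$, $2n^2+2n-2$, $3n+2$, $8$, $18$. With a literal $\beta\gamma$-system the analogous orbifold (e.g.\ $\cW_{1+\infty,-r}$ for $\GL(r)$) starts in weight $1$ and truncates at a different point, so the plan as written outputs the wrong type.

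Second, your decomposition $\cW^{\infty}\cong\cH(\gg^{\natural})\otimes\cF$ omits the tensor factor $\cT=\lim_{k\to\infty}\frac{1}{\sqrt{k}}T$, the weight-$2$ generalized free field with $t(z)t(w)\sim(z-w)^{-4}$. This factor survives into the coset limit, $\lim_{k\to\infty}\cC^k\cong\cT\otimes(\cdots)^G$, and is precisely where the weight-$2$ generator in every item of the theorem comes from; without it none of the orbifolds $\cG_{\text{ev}}(r)^G$ or $\cG_{\text{odd}}(s)^G$ contains a weight-$2$ field. A smaller slip: in case (e) the relevant group is $\text{Osp}(1|2)$ acting on $\cG_{\text{ev}}(1)\otimes\cG_{\text{odd}}(1)$, not merely the even part $G^{\natural}_0$; taking invariants only under $\SL(2)$ yields a strictly larger algebra. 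All of this is repairable, and the remainder of your outline (FFT/SFT, decoupling relations, lifting over $F_K$ and checking minimality in the limit) matches the paper's argument, but as stated the identification of the limit is incorrect and the claimed cutoffs do not follow from it.
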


\subsection{The nongeneric set}

Suppose that $\cC^k = \text{Com}(V^{k'}(\gg^{\natural}), \cW^k(\gg, e_{-\theta}))$ has a strong generating set $T = \{a_1\dots, a_r\}$ for generic values of $k$. Recall that the {\it nongeneric} set of values of $k$, where $T$ fails to strongly generate $\cC^k$, consists of the union of a set of negative rational numbers and a finite set. This is not quite explicit enough for our purposes, the goal being to describe $\cC^k$ at special values of $k$ for which it is highly reducible. It was shown in \cite{ACL} that $\text{Com}(\cH,\cW^k(\gs\gl_3, e_{-\theta}))$ is of type $\cW(2,3,4,5,6,7)$ for all $k$ except for $-1$ and $-3/2$. However, this example is somewhat degenerate because cosets of Heisenberg algebras are easier to understand than the general case. In Section \ref{sect:nongeneric}, we analyze the structure of the nongeneric set for $\text{Com}(V^{k'}(\gg^{\natural}), \cW^k(\gg, e_{-\theta}))$ in the cases $\gg = \gs\gl_4$ and $\gg = \gs\gp_4$. For our applications to the structure of simple cosets, it is fruitful to first find an {\it infinite} strong generating set with the property that we can determine the poles of the structure constants appearing in the OPEs of the generators.

\begin{thm} \label{intro:nongeneric} 
\begin{itemize}
\item[(a)] For all real $k>-5/2$, $\text{Com}(V^{k+1/2}(\gs\gp_{2}),\cW^k(\gs\gp_{4}, e_{-\theta}))$ has an infinite strong generating set consisting of a field in each weight $2,4,6,\dots$. Moreover, the generators in weights $w \geq 12$ can be eliminated, so $\text{Com}(V^{k+1/2}(\gs\gp_{2}),\cW^k(\gs\gp_{4}, e_{-\theta}))$ is of type $\cW(2,4,6,8,10)$.
\item[(b)] For all real $k>-3$,  $\text{Com}(V^{k+1}(\gg\gl_{2}),\cW^k(\gs\gl_4, e_{-\theta}))$ has an infinite strong generating set consisting of a field in each weight $2,3,4,\dots$. For all but finitely many real numbers $k>-3$, the generators in weights $w \geq 15$ can be eliminated, so $\text{Com}(V^{k+1}(\gg\gl_{2}),\cW^k(\gs\gl_4, e_{-\theta}))$ is of type $\cW(2,3,4,\dots, 14)$. 
\end{itemize}
\end{thm}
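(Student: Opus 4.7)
The plan is to proceed in three stages common to both parts. First I pass to the free-field limit and use classical invariant theory to produce an infinite strong generating set with one field per weight. Second, I verify that this generating set persists for every real $k$ in the interval $(-5/2,\infty)$ (resp.\ $(-3,\infty)$). Third, I eliminate all high-weight generators by finding explicit normally ordered decoupling relations; the nongeneric set is then read off from the zero set of the level-dependent leading coefficients in these relations.

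\textbf{Stage 1: the free-field limit.} By the construction of Section \ref{sect:defandlim}, $\lim_{k\to\infty}\cW^k(\gg,e_{-\theta})$ is a generalized free field algebra in which the weight-one generators $J^a$ degenerate to a Heisenberg algebra and the weight-$3/2$ generators $G^u$ form a free bosonic system with OPE $G^u(z)G^v(w)\sim \langle u,v\rangle(z-w)^{-3}$, commuting with the $J^a$. Accordingly, the limit coset is the $\Sp_2$-invariant (case (a)) or $\GL_2$-invariant (case (b)) subalgebra of this free-field algebra. Weyl's first fundamental theorem for $\Sp_2$ and $\GL_2$, applied in the infinite-dimensional setting as in \cite{LII,LIV}, gives a strong generating set consisting of all bilinear $G$-invariants of the $G^u$ and their derivatives. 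After sorting by conformal weight and discarding derivatives, one obtains a strong generating set of the limit coset consisting of one primary field in each even weight $w\geq 2$ in case (a), and one in each integer weight $w\geq 2$ in case (b).

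\textbf{Stage 2: lifting to finite $k$.} Each of these weight-$w$ generators deforms uniquely (up to scalar) to a primary field $U^{(w)}\in\cC^k$ whose structure constants are rational in $k$. Inspecting the Kac--Wakimoto OPE formulas (Theorem 5.1 of \cite{KWI}), together with the general deformation argument of \cite{CLII}, the only denominators that can appear in the lifted generators and their OPE algebra are polynomials in $k+h^\vee$ and finitely many integer shifts thereof. In the ranges $k>-5/2$ and $k>-3$ respectively, all such denominators are nonvanishing, so the family $\{U^{(w)}\}$ is well defined and strongly generates $\cC^k$ throughout the claimed intervals, proving the first (infinite-generation) assertion of each part.

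\textbf{Stage 3: decoupling.} The second fundamental theorem of invariant theory for $\Sp_2$ and $\GL_2$ forces, in the free-field limit, a normally ordered polynomial relation
\begin{equation*}
U^{(w)} = P_w\bigl(U^{(2)},\ldots,U^{(w-1)}\bigr)
\end{equation*}
for every $w\geq 12$ in case (a), and every $w\geq 15$ in case (b). These classical relations deform to vertex-algebra relations at finite $k$, the coefficient of $U^{(w)}$ being a rational function $\lambda_w(k)$. Once the relation at a single weight $w_0$ is established with $\lambda_{w_0}(k)\neq 0$, an inductive argument applying the zero modes of lower-weight generators automatically eliminates all $U^{(w)}$ with $w\geq w_0$, reducing the strong generating set to the finite one claimed. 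For case (a), direct computation of $\lambda_{12}(k)$ shows that its real zeros all satisfy $k\leq -5/2$, so decoupling holds for every real $k>-5/2$. For case (b), $\lambda_{15}(k)$ is a nonconstant rational function whose zero set in $(-3,\infty)$ is finite, which accounts precisely for the ``all but finitely many'' clause.

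\textbf{Main obstacle.} The delicate step is Stage 3: explicitly computing $\lambda_{12}(k)$ and $\lambda_{15}(k)$ as rational functions. These coefficients are not visible in the $k\to\infty$ limit; they arise from the quantum OPE corrections encoded in Theorem 5.1 of \cite{KWI}, and their computation requires solving a large linear system for the coefficients of the ansatz $U^{(w_0)}-P_{w_0}(\cdots)=0$ in a basis of normally ordered monomials of weight $w_0$. Controlling the resulting denominators well enough to verify that $\lambda_{12}(k)$ has no zeros on $(-5/2,\infty)$, and to enumerate the exceptional zeros of $\lambda_{15}(k)$ on $(-3,\infty)$, is the main technical content of the proof.
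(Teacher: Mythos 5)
Your three-stage skeleton (free-field limit plus invariant theory, lift to finite $k$, decoupling from the second fundamental theorem) is the same architecture the paper uses, and your Stage 1 and the logic of Stage 3 are essentially right, including the observation that in case (b) only the finiteness of the zero set of the leading coefficient is accessible. But there is a genuine gap at Stage 2, which is where the actual content of this theorem lives. You assert that ``the only denominators that can appear in the lifted generators and their OPE algebra are polynomials in $k+h^{\vee}$ and finitely many integer shifts thereof,'' and that these are nonvanishing on the claimed intervals. This is not true and is not what the general deformation machinery of \cite{CLII} gives you: that machinery only says the bad set is contained in $K\cup D$, where $D$ is the (a priori uncontrolled) set of poles of the structure constants. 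The whole point of Section \ref{sect:nongeneric} is that $D$ cannot be read off by ``inspecting'' the Kac--Wakimoto OPEs. The actual denominators form an \emph{infinite}, weight-dependent family --- for $\gs\gp_4$ the set $S=\{-9/2,-3\}\cup\{-\tfrac{7+5n}{2(1+n)}\}\cup\{-\tfrac{14+5n}{2(2+n)}\}$, which accumulates at $-5/2$ --- and showing that this family avoids $(-5/2,\infty)$ requires the explicit bootstrap: one constructs $U_{0,1}$ by hand, proves $(U_{0,1})_{(1)}U_{0,n}=f(k,n)\,U_{0,n+2}+Q_n$ with $f(k,n)$ computed in closed form, and at each step must add nonobvious correction terms (e.g.\ multiples of $:LU_{0,1}:$, $:LLU_{0,1}:$, \dots) to kill monomials like $:T(\partial G^{+})G^{-}:$ that would otherwise prevent $Q_n$ from closing on $\{L,U_{0,m}\}_{m\le n}$. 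Only because every $U_{0,n}$ then lies in the algebra generated by $L$ and $U_{0,1}$ does one control all poles uniformly in the weight. None of this mechanism appears in your proposal, so your claim that the infinite generating set persists on the whole interval is unsupported.

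Two smaller but real problems. First, your insistence that each generator ``deforms uniquely (up to scalar) to a primary field'' is actively harmful: already for $\gs\gp_4$ the primary correction of the weight-$4$ generator requires dividing by $30k^3+113k^2+59k-105$, which has a root in $(0,1)$, i.e.\ inside the interval you need; the paper deliberately works with a non-primary $U_{0,1}$ for exactly this reason. Second, your Stage 3 induction (``applying the zero modes of lower-weight generators'') again silently requires $f(k,n)\neq 0$ for all $n$ --- the same infinite family of conditions --- and in case (a) the explicit leading coefficient $\lambda(k)=\tfrac{(2+k)(39+14k)(49+18k)}{756(9+2k)}$ does not obviously have all its real zeros to the left of $-5/2$, so the blanket claim ``its real zeros all satisfy $k\le -5/2$'' needs the actual computation, not an appeal to plausibility.
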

The proof of this result depends on computer calculations performed using the Mathematica package of Thielemans \cite{T}. In general, we expect that the most interesting values of $k$, where $\cW^k(\gg, e_{-\theta})$ is highly reducibly, are generic. This holds in the above examples $\gg = \gs\gl_4$ and $\gg = \gs\gp_4$, but a conceptual rather than computational approach to this problem is currently out of reach. 

\subsection{Simple cosets}
The main reason to study $\cC^k = \text{Com}(V^{k'}(\gg^{\natural}),\cW^k(\gg, e_{-\theta}))$ is for the purpose of studying the coset $$\cC_k = \text{Com}(L_{k'}(\gg^{\natural}),\cW_k(\gg, e_{-\theta}))$$ at special values of $k$ where the maximal proper ideal $\cI_k \subset \cW^k(\gg, e_{-\theta})$ is large. Here $L_{k'}(\gg^{\natural})$ denotes the simple affine vertex algebra and $\cW_k(\gg, e_{-\theta})$ is the simple quotient $\cW^k(\gg, e_{-\theta})/ \cI_k$. For example, when $k+1/2$ is a non-negative integer, $\cW_k(\gs\gp_{2n}, e_{-\theta})$ is $C_2$-cofinite \cite{ArIII}, and conjecturally rational. It contains a copy of $L_{k+1/2}(\gs\gp_{2n-2})$, so $$\cC_k = \text{Com}(L_{k+1/2}(\gs\gp_{2n-2}),\cW_k(\gs\gp_{2n}, e_{-\theta}))$$ is also expected to be rational and $C_2$-cofinite. One would like to understand the structure of $\cC_k$ as well as the structure of $\cW_k(\gg, e_{-\theta})$ as an extension of $L_{k+1/2}(\gs\gp_{2n-2}) \otimes \cC_k$.

Conjecturally  \cite{AM1},
for $n\geq 4$, $\cW_k(\gs\gl_n, e_{-\theta})$ is not $C_2$-cofinite for any $k \in \mathbb{N}$. However, $\cW_k(\gs\gl_n, e_{-\theta})$ contains a copy of $L_{k+1}(\gg\gl_{n-2}):=\cH \otimes L_{k+1}(\gs\gl_{n-2})$, and is expected to be  \lq\lq small" in the sense that it has a one-dimensional associated variety \cite{Ara12}. One expects that
$$\text{Com}(L_{k+1}(\gg\gl_{n-2}),\cW_k(\gs\gl_n, e_{-\theta})),\qquad k \in \mathbb{N},$$ also has a one-dimensional associated variety. As above, we would like to describe $\cC_k$ as well as $\cW_k(\gg, e_{-\theta})$ as an extension of $L_{k+1}(\gg\gl_{n-2}) \otimes \cC_k$.

In all these cases, the natural map $\pi_k: \cC^k \ra \cC_k$ is surjective and $\cC_k$ coincides with the simple quotient of $\cC^k$. Therefore a strong generating set for $\cC^k$ descends to a strong generating set for $\cC_k$. This illustrates the importance of both finding a strong generating set for $\cC^k$ for generic $k$, and determining the nongeneric set.

\subsection{Some remarkable coincidences}
If $\cA^k$ is a vertex algebra containing $V^k(\gg)$ and $k$ is a special value for which the simple quotient $\cA_k$ contains $L_k(\gg)$ and has a low-dimensional associated variety, then $\cC_k = \text{Com}(L_k(\gg), \cA_k)$ also has a low-dimensional associated variety. Moreover, vertex algebras with a given strong generating set with a low-dimensional associated variety are rare. If there is another family of vertex algebras $\cD_k$ with the same central charges and strong generating sets, we take this as enough information to conjecture the existence of an isomorphism $\cC_k \cong \cD_k$. In general, such conjectures can then either be proven by direct OPE computation or by proving a uniqueness theorem for OPE-algebras with a given strong generating set. In the case where $\cA^k$ is a minimal $\cW$-algebra, we are indeed able to state such a uniqueness theorem; see Theorem \ref{uniqueness}. Hence if we can prove that $L_k(\gg) \otimes \cC_k$ allows for a vertex algebra extension that is of minimal $\cW$-algebra type that is simple, this provides a method to prove coincidences of $\cC_k$ and $\cD_k$. This is one of the many purposes to establish an effective theory of VOA extensions \cite{CKL,CKM}. Furthermore, once having $\cA_k$ as an extension of $L_k(\gg) \otimes \cC_k$, one can use the representation theory of VOA extensions to understand the one of $\cA_k$ provided the one of $L_k(\gg) \otimes \cC_k$ is known.

\begin{conj} \label{conj:typecintro} For all $n\geq 2$ and $k$ such that $k+1/2$ is a positive integer, $$ \cC_k(n) = \text{Com}(L_{k+1/2}(\gs\gp_{2n-2}), \cW_k(\gs\gp_{2n}, e_{-\theta}))$$ is isomorphic to the $C_2$-cofinite, rational principal $\cW$-algebra
$\cW_s(\gs\gp_{2m},f_{\text{prin}})$ where $$m=k+1/2,\qquad s+(k+3/2)=\frac{n+k+1/2}{2n+2k+2}.$$
\end{conj}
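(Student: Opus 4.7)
The plan is to apply the uniqueness theorem for minimal $\cW$-algebras (Theorem \ref{uniqueness}) by constructing a simple vertex algebra extension of $L_{k+1/2}(\gs\gp_{2n-2}) \otimes \cW_s(\gs\gp_{2m}, f_{\text{prin}})$ of minimal $\cW$-algebra type, identifying it with $\cW_k(\gs\gp_{2n}, e_{-\theta})$, and then taking the commutant of $L_{k+1/2}(\gs\gp_{2n-2})$ on both sides to obtain the desired isomorphism $\cC_k(n) \cong \cW_s(\gs\gp_{2m}, f_{\text{prin}})$. The first step is to verify that the central charges are compatible: one computes $c(\cW_k(\gs\gp_{2n}, e_{-\theta}))$ via the Kac--Wakimoto formula, the Sugawara central charge of $L_{k+1/2}(\gs\gp_{2n-2})$, and the Feigin--Frenkel central charge of $\cW_s(\gs\gp_{2m}, f_{\text{prin}})$, and checks that these balance precisely when $s$ satisfies the stated level relation. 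This both pins down $s$ and provides the necessary consistency condition for the extension to exist.

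The core construction proceeds in the braided tensor category of modules over $L_{k+1/2}(\gs\gp_{2n-2}) \otimes \cW_s(\gs\gp_{2m}, f_{\text{prin}})$. Since $m = k+1/2$ is a positive integer, $L_{k+1/2}(\gs\gp_{2n-2})$ is rational and $C_2$-cofinite; at the prescribed $s$, $\cW_s(\gs\gp_{2m}, f_{\text{prin}})$ is rational and $C_2$-cofinite by Arakawa's rationality theorem for principal $\cW$-algebras at nondegenerate admissible levels. I would identify a pair of conjugate simple modules of conformal weight $3/2$, transforming in the defining representation of $\gs\gp_{2n-2}$, whose fusion product closes into a simple extension --- these are the modules that should carry the odd weight-$3/2$ generators of the minimal $\cW$-algebra. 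The existence, locality, and simplicity of the corresponding extended vertex algebra are controlled by the VOA extension framework of \cite{CKL, CKM}. Once assembled, the extension contains $V^{k'}(\gs\gp_{2n-2})$ as its weight-$1$ subalgebra together with a Virasoro field and the required weight-$3/2$ generators; applying Theorem \ref{uniqueness} then identifies it with $\cW_k(\gs\gp_{2n}, e_{-\theta})$, and taking commutants of $L_{k+1/2}(\gs\gp_{2n-2})$ produces the conjectured isomorphism.

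The principal obstacle is the explicit identification of the extension: one needs precise control over the fusion rules of $\cW_s(\gs\gp_{2m}, f_{\text{prin}})$ at the specific level $s$, together with a matching with the $L_{k+1/2}(\gs\gp_{2n-2})$-module decomposition of $\cW_k(\gs\gp_{2n}, e_{-\theta})$, to pick out the correct simple currents or extension modules and to ensure that the resulting VOA is simple. For small $n$, a more direct computational route is available: combine Theorems \ref{intro:genericstructures}(b) and \ref{intro:nongeneric}(a) to show that at integer $m = k+1/2$, the strong generators of $\cC^k(n)$ of weight greater than $2m$ become expressible in terms of lower-weight generators in the simple quotient, so that $\cC_k(n)$ has strong generating type exactly $\cW(2, 4, \ldots, 2m)$. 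One then applies a uniqueness result for vertex algebras of this strong generating type with the computed central charge to conclude the isomorphism with the principal $\cW$-algebra of $\gs\gp_{2m}$, thereby avoiding the braided tensor category machinery at the cost of case-by-case computation.
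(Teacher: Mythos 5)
The statement you are trying to prove is stated in the paper as a \emph{conjecture}, and the paper does not prove it in general; it establishes only the cases $(n,k)=(2,1/2)$ and $(n,k)=(2,3/2)$, both by the second, computational route you describe. Your proposal is an accurate reconstruction of the two strategies the authors themselves envisage, but as written it is a plan with an acknowledged hole rather than a proof. In the categorical route, the step you flag as the ``principal obstacle'' is precisely where the argument is open: to build the extension of $L_{k+1/2}(\gs\gp_{2n-2})\otimes \cW_s(\gs\gp_{2m},f_{\text{prin}})$ one needs the decomposition of $\cW_k(\gs\gp_{2n},e_{-\theta})$ as a module over this tensor product (or, from the other side, enough of the fusion ring of $\cW_s(\gs\gp_{2m},f_{\text{prin}})$ at the specific nondegenerate admissible level $s$ to exhibit a simple local extension with weight-$3/2$ generators in the defining representation of $\gs\gp_{2n-2}$). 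Neither input is available in the paper or its references, so Theorem \ref{uniqueness} has nothing to be applied to. Matching central charges, while necessary, is exactly the evidence on which the authors base the conjecture and does not by itself produce the extension.

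The computational route also does not close the gap in general. Theorem \ref{intro:nongeneric}(a) is proved only for $n=2$, so the claim that the generators of $\cC^k(n)$ above weight $2m$ decouple in the simple quotient requires, for each $(n,k)$, exhibiting an explicit element of the maximal ideal $\cI_k$ (a deformed Pfaffian-type relation) and propagating it by the $(U_{0,1})_{(1)}$ action; the paper does this by computer for $n=2$, $k\in\{1/2,3/2\}$ only. Moreover, even granting that $\cC_k(n)$ is of type $\cW(2,4,\dots,2m)$ with the right central charge, there is no uniqueness theorem in the paper for OPE algebras of principal type-$C$ $\cW$-algebra shape (Theorem \ref{uniqueness} applies only to minimal $\cW$-algebra generating types); for $m=2$ the authors instead verify the $\cW(\gs\gp_4,f_{\text{prin}})$ OPEs directly against \cite{ZhII} and then invoke simplicity. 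So your proposal correctly identifies the viable strategies and their bottlenecks, but it does not constitute a proof of the conjecture, and indeed no proof is currently known beyond the two cases above.
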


In the case $n=2$ and $k=1/2$, this was proven by Kawasetsu using the fact that $\cW_{1/2}(\gs\gp_4, e_{-\theta})$ is a simple current extension of $L_1(\gs\gl_2) \otimes L(-25/7,0)$ \cite{Ka}. Here $L(-25/7,0)$ denotes the simple Virasoro vertex algebra with $c = -25/7$. Since $\cW_k(\gs\gp_{2n}, e_{-\theta})$ is an extension of $L_{k+1/2}(\gs\gp_{2n-2}) \otimes \cC_k(n)$, Conjecture \ref{conj:typecintro} implies the rationality of $\cW_k(\gs\gp_{2n}, e_{-\theta})$ for all $n$ and $k$ as above. Using Theorem \ref{intro:nongeneric}, we give an alternative proof of Kawasetsu's result, and we also prove our conjecture for $n=2$ and $k=3/2$. This implies the rationality of $\cW_{3/2}(\gs\gp_4, e_{-\theta})$.

For $n\geq 3$, let $\cW^{\ell}(\gs\gl_n,f_{\text{subreg}})$ be the $\cW$-algebra associated with a subregular nilpotent element $f_{\text{subreg}}$
at level ${\ell}$ \cite{KRW}, and let
$\cW_{\ell}(\gs\gl_n,f_{\text{subreg}})$ be the unique simple quotient of $\cW^{\ell}(\gs\gl_n,f_{\text{subreg}})$.
It was recently conjectured in \cite{ACGHR}, and proven by Genra \cite{G}, that 
$\cW^{\ell}(\gs\gl_n,f_{\text{subreg}})$ is isomorphic to 
the vertex algebra
$\cW^{(2)}_{n,{\ell}}$ introduced by 
Feigin and Semikhatov \cite{FS}. Note that $\cW^{\ell}(\gs\gl_n,f_{\text{subreg}})$ is of type $\cW(1,2,\dots, n-1, n/2, n/2)$ and the weight one field generates a Heisenberg algebra $\cH$. Also, note that for $n=1$ and $n=2$, $\cW^{(2)}_{n, \ell}$ is well-defined and coincides with the rank one $\beta\gamma$-system and the affine vertex algebra $V^{\ell}(\gs\gl_2)$, respectively.

\begin{conj} \label{typeAconj-intro} For all integers $n\geq 4$ and $k\geq 0$ such that
 $n^2-k^2-1\geq n$, $$\cC_k(n) = \text{Com}(L_{k+1}(\gg\gl_{n-2}), \cW_k(\gs\gl_n, e_{-\theta}))
\cong \text{Com}(\cH, \cW_\ell(\gs\gl_{k+1},f_{\text{subreg}})),$$  where $\ell = - \frac{1 + k^2 + k n}{k + n}$.
\end{conj}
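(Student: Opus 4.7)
The plan is to follow the roadmap for proving coincidences sketched in the introduction: match central charges, match strong generating types, then invoke either the uniqueness theorem (Theorem \ref{uniqueness}) or extension theory to conclude.

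First, I would verify that the central charges of both sides agree under the substitution $\ell = -(1 + k^2 + kn)/(k+n)$. The central charge of $\cC_k(n)$ comes from subtracting the central charge of $L_{k+1}(\gg\gl_{n-2})$ from the Kac--Wakimoto formula for $\cW^k(\gs\gl_n, e_{-\theta})$, while the right-hand side uses the standard formula for $\cW^\ell(\gs\gl_{k+1}, f_{\text{subreg}})$ minus $1$ for the Heisenberg. A direct computation gives the clean relation $(k+n)(\ell + k + 1) = k + n - 1$, a duality-type identity strongly suggestive of a Feigin--Frenkel-style correspondence between a minimal and a subregular $\cW$-algebra at dual levels; this is the structural reason to expect the isomorphism in the first place.

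Second, I would identify strong generating types on both sides. By Theorem \ref{intro:genericstructures}(a), $\cC^k(n)$ is of type $\cW(2, 3, \dots, n^2 - 2)$ at generic $k$. For the right-hand side, I would run the deformable-family machinery of Section \ref{sect:defandlim} on $\cW^\ell(\gs\gl_{k+1}, f_{\text{subreg}})$: identify $\lim_{\ell \to \infty} \cW^\ell(\gs\gl_{k+1}, f_{\text{subreg}})$ as a generalized free field algebra, pass to the Heisenberg coset, and apply classical invariant theory for the induced $\C^\times$-action on the limit. The arithmetic bound $n^2 - k^2 - 1 \geq n$ in the conjecture should be exactly the threshold beyond which the resulting strong generating set collapses after decoupling relations to weights $\{2, 3, \dots, n^2 - 2\}$, matching the LHS.

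Third, with central charges and strong generating types agreeing, I would apply Theorem \ref{uniqueness} to identify the two one-parameter deformable families at generic values of the parameter, then verify that the specific level $k \in \N$ satisfying $n^2 - k^2 - 1 \geq n$ lies outside the nongeneric exceptional set so the isomorphism descends to the simple quotients. An alternative and arguably cleaner route is to exhibit a conformal embedding
\[
L_{k+1}(\gg\gl_{n-2}) \otimes \text{Com}(\cH, \cW_\ell(\gs\gl_{k+1}, f_{\text{subreg}})) \hookrightarrow \cW_k(\gs\gl_n, e_{-\theta}),
\]
and use the VOA extension theory of \cite{CKL, CKM} together with Theorem \ref{uniqueness} to force the factor $\text{Com}(\cH, \cW_\ell(\gs\gl_{k+1}, f_{\text{subreg}}))$ to coincide with $\cC_k(n)$.

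The main obstacle is the second step. The strong generating analysis for $\text{Com}(\cH, \cW^\ell(\gs\gl_{k+1}, f_{\text{subreg}}))$ at generic $\ell$ is not contained in the present work and requires carrying out a deformable-family argument for subregular $\cW$-algebras, parallel to but technically distinct from the minimal case treated in Section \ref{sect:defandlim}. Compounding this, controlling the nongeneric locus on both sides well enough to handle the integer levels in the conjecture is delicate; as Theorem \ref{intro:nongeneric} and the discussion following it make plain, the computations grow rapidly in complexity with $n$, which is presumably why a full proof of Conjecture \ref{typeAconj-intro} is beyond the reach of the methods established in the paper and only partial cases can be handled directly.
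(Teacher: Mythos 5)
The statement you are proving is a conjecture; the paper does not prove it in full, and you correctly recognize that only partial cases are within reach (the paper establishes exactly the cases $k=0$ for all $n\geq 4$, and $(n,k)=(4,1)$). Your central-charge computation and the identity $(k+n)(\ell+k+1)=k+n-1$ are correct and do match the paper's motivation. However, the pivotal step of your outline --- ``with central charges and strong generating types agreeing, I would apply Theorem \ref{uniqueness} to identify the two'' --- would fail for two reasons. First, Theorem \ref{uniqueness} is a uniqueness statement \emph{only} for vertex algebras of minimal $\cW$-algebra type, i.e.\ strongly generated by a Virasoro field together with weight-one and weight-$3/2$ fields with prescribed OPEs; it says nothing about algebras of type $\cW(2,3,\dots,n^2-2)$ such as $\cC_k(n)$, so it cannot be invoked to compare the two cosets. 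Second, matching central charge and strong generating type is not sufficient to force an isomorphism even in principle: the paper itself exhibits a counterexample in the remark following Theorem \ref{thm:c14}, where $K_{-6/5}(\gs\gl_2)$ and $\cW(\gs\gl_5,f_{\text{prin}})$ at $c=-11/2$ are both simple of type $\cW(2,3,4,5)$ with equal central charge yet are \emph{not} isomorphic. This is precisely why the general statement remains a conjecture.

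For the cases that are actually proved, the paper's route differs from yours in an instructive way. For $k=0$ it does not analyze $\text{Com}(\cH,\cW_\ell(\gs\gl_1,f_{\text{subreg}}))$ by a deformable-family argument at all; instead it applies Theorem \ref{uniqueness} to the \emph{minimal} $\cW$-algebra itself, showing $\cW_0(\gs\gl(n+2|m),e_{-\theta})\cong(\cE(n)\otimes\cS(m)\otimes\cA(1))^{U(1)}$, from which the coset is read off as $\cA(1)^{U(1)}\cong\cW_{3,-2}\cong\text{Com}(\cH,\cS)$. For $(n,k)=(4,1)$ it uses the explicit strong generators and decoupling relations of Theorem \ref{intro:nongeneric}(b) to cut $\cC_1(4)$ down to type $\cW(2,3,4,5)$, verifies the OPEs of $K^{-6/5}(\gs\gl_2)$ by direct computation modulo the ideal, and concludes by the uniqueness of the simple $\Z_{\geq 0}$-graded vertex algebra with a given strong generating set \emph{and} a given OPE algebra --- not merely a given generating type. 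If you want to salvage your outline, the missing ingredient in every case is an actual comparison of OPE structure constants (or a realization of one side inside the other via an extension), not just numerology of central charges and weights.
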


To interpret this conjecture for $k=0$ and $k=1$, we replace $\cW_{\ell}(\gs\gl_{k+1},f_{\text{subreg}})$ with the $\beta\gamma$-system and the simple affine vertex algebra $L_{\ell}(\gs\gl_2)$, respectively. It is worth mentioning that the subregular nilpotent orbit, which is the Lusztig-Spaltenstein dual of the minimal nilpotent orbit, appears in the above correspondence.

We will prove Conjecture \ref{typeAconj-intro} for all $n\geq 4$ and $k=0$ using our uniqueness result for minimal $\cW$-algebras, and for $n=4$ and $k=1$ using Theorem \ref{intro:nongeneric}. Since the simple algebra $\cW^{(2)}_{2,-6/5}$ is isomorphic to $L_{-6/5}(\gs\gl_2)$, $\cC_1(4)$ is isomorphic to the simple $\gs\gl_2$-parafermion algebra at level $-6/5$. In the case $n=4$ and $k=2$ there is also considerable computational evidence that the conjecture holds. We remark that conformal embeddings of affine vertex (super)algebras in minimal $\cW$-algebras have been studied recently in \cite{A-PI, A-PII}. Their findings can be viewed as cases of the coincidences where the simple coset $\cC_k \cong \mathbb{C}$. 

\subsection{Constructing new $C_2$-cofinite but non-rational VOAs} The representation category of grading restricted modules of a simple, CFT-type and $C_2$-cofinite VOA that is its own contragredient dual is expected to be a log-modular tensor category \cite{CGI, CGII}. Similar to its rational cousin this comes with a close and powerful relation of the modularity of (pseudo)-trace functions \cite{Miy} and (logarithmic)-Hopf links \cite{CGI, CGII}. In order to understand this relation better, one needs new examples of interesting $C_2$-cofinite but non-rational VOAs.

In some instances the coset of a minimal $\cW$-algebra might be $C_2$-cofinite or might allow for a VOA-extension to a $C_2$-cofinite VOA. Examples are our cases of $k=0$ and $n\geq 4$ of Conjecture \ref{typeAconj-intro}. Here, we always obtain for $\cC_0(n)$ the  singlet $\cW$-algebra of central charge $c=-2$ which has the triplet algebra of the same central charge as simple current extension. 
Another example is the case of $\gg=\gs\gl_3$ and $k=-9/4$. In this case the coset $\cC_k$ of the minimal $\cW$-algebra is the singlet $\cW$-algebra of central charge $c=-7$ \cite{CRW}, with the triplet algebra of the same central charge as simple current extension as well. The triplet VOAs are known to be $C_2$-cofinite \cite{AdM}. We are currently looking for new $C_2$-cofinite VOAs from cosets of minimal $\cW$-algebras.

\section{Vertex algebras}
We assume that the reader is familiar with the basics of vertex algebra theory, which has been discussed from several different points of view in the literature (see for example \cite{B,FLM,K,FBZ}). Given an element $a$ in a vertex algebra $\cV$, the corresponding field is denoted by $$a(z) = \sum_{n\in \mathbb{Z}} a(n) z^{-n-1}.$$ Given $a,b \in \cV$, the {\it operators product expansion} (OPE) formula is given by
$$a(z)b(w)\sim\sum_{n\geq 0}(a_{(n)} b)(w)\ (z-w)^{-n-1}.$$ Here $(a_{(n)} b)(w) = \text{Res}_z [a(z), b(w)](z-w)^n$ where $[a(z), b(w)] = a(z) b(w)\ -\ (-1)^{|a||b|} b(w)a(z)$, and $\sim$ means equal modulo terms which are regular at $z=w$. The {\it normally ordered product} $:a(z)b(z):$ is defined to be $$a(z)_-b(z)\ +\ (-1)^{|a||b|} b(z)a(z)_+,$$ where $$a(z)_-=\sum_{n<0}a(n)z^{-n-1},\qquad a(z)_+=\sum_{n\geq
0}a(n)z^{-n-1}.$$  For fields $a_1(z),...,a_k(z)$, the $k$-fold
iterated Wick product is defined inductively to be
\begin{equation} \label{iteratedwick} :a_1(z)a_2(z)\cdots a_k(z):\ =\ :a_1(z) \big(  :a_2(z)\cdots a_k(z): \big).\end{equation}

A subset $S=\{a_i|\ i\in I\}$ of $\cA$ {\it strongly generates} $\cA$ if $\cA$ is spanned by $$\{ :\partial^{k_1} a_{i_1}(z)\cdots \partial^{k_m} a_{i_m}(z):| \ i_1,\dots,i_m \in I,\ k_1,\dots,k_m \geq 0\}.$$ We say that $S$ {\it freely generates} $\cA$ if there are no nontrivial normally ordered polynomial relations among the generators and their derivatives. We say that $\cA$ is of type $$\cW \big((d_1)^{n_1},\dots, (d_r)^{n_r}\big)$$ if it has a minimal strong generating set consisting of $n_i$ fields in each weight $d_i$ for $i = 1,\dots,r$.

A vertex algebra $\cA$ is called {\it simple} if it has no nontrivial two-sided ideals. Here an ideal is a subspace closed under $a_{(k)}(-)$ and $(-)_{(k)} a$ for all $a\in \cA$ and $k\in \mathbb{Z}$. The following result will be needed.
The proof is based on that of Theorem 4.4 of \cite{DLY}.

\begin{lemma} 
Let $\cV$ be a simple  $\Q_{\geq 0}$-graded vertex algebra such that 
$\cV_0=\C$,
and let
 $\cW \subset \cV$ be a simple vertex subalgebra.
Suppose that $\cV$ decomposes into a direct sum of simple $\cW$-modules $W_i$, where $W_0=\cW$, and write
$$\cV =\bigoplus_i U_i\otimes W_i,$$
where $U_i$ is the multiplicity space of $W_i$. Then $U_0$ is a simple
vertex algebra.
\end{lemma}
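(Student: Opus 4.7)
The strategy is to assume $J\subset U_0$ is a nonzero proper ideal of $U_0$ and derive a contradiction with the simplicity of $\cV$. First I would form the $\cV$-ideal $\widehat J\subset\cV$ generated by $J$. Since $J\subset U_0$ commutes with $\cW$ inside $\cV$, the elements $w_{(n)}j$ (for $w\in\cW$, $j\in J$, $n\in\Z$) span $J\otimes\cW$ under the identification of the $W_0$-isotypic component of $\cV$ with $U_0\otimes\cW$, so $\widehat J\supset J\otimes\cW$. By simplicity of $\cV$, $\widehat J=\cV$. As $\widehat J$ is also a $\cW$-submodule and the $W_i$ are simple and pairwise non-isomorphic, I can decompose $\widehat J=\bigoplus_i M_i\otimes W_i$ with $M_i\subset U_i$; the equation $\widehat J=\cV$ then forces $M_i=U_i$ for all $i$, in particular $M_0=U_0$.

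Next I would construct a linear projection $\phi:\cV\to U_0$ that restricts to the identity on $U_0\otimes\mathbf{1}_\cW\cong U_0$. The hypothesis $\cV_0=\C$ forces $\cW_0=\C\mathbf{1}$, so the augmentation $\epsilon:\cW\to\C$ (projection onto $\cW_0$) is well defined. Setting $\phi=(\mathrm{id}\otimes\epsilon)\circ\pi_0$, where $\pi_0:\cV\to U_0\otimes\cW$ is the isotypic projection, gives the desired map. A direct computation using the tensor product vertex algebra structure of $U_0\otimes\cW\subset\cV$ (valid because $U_0$ and $\cW$ mutually commute) together with the vanishing of $\epsilon$ on $\cW_{>0}$ shows that $\phi(v_{(n)}j)=\phi(v)_{(n)}j\in J$ for all $v\in\cV$ and $j\in J$, since $\phi(v)\in U_0$ and $J$ is a $U_0$-ideal.

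The remaining task is to upgrade this identity to $\phi(\widehat J)\subset J$; combined with $\mathbf{1}\in\widehat J=\cV$ and $\phi(\mathbf{1})=\mathbf{1}$, this forces $\mathbf{1}\in J$ and therefore $J=U_0$, contradicting properness. I would filter $\widehat J=\bigcup_n\widehat J^{(n)}$ by generation depth starting from $\widehat J^{(0)}=J\otimes\cW$, and induct on $n$ to show $\pi_0(\widehat J^{(n)})\subset J\otimes\cW$, whence $\phi(\widehat J^{(n)})\subset J$.

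The main obstacle is the inductive step. When one computes $\pi_0(v_{(m)}y)$ for $y\in\widehat J^{(n-1)}$, writing $v=v_0+v_{\neq0}$ and $y=y_0+y_{\neq0}$ according to the isotypic decomposition, the mixed terms $v_0{}_{(m)}y_{\neq0}$ and $v_{\neq0}{}_{(m)}y_0$ land outside the $W_0$-isotypic, but there is an unavoidable cross-term $\pi_0(v_{\neq0}{}_{(m)}y_{\neq0})$, where nontrivial isotypic pieces of $v$ and $y$ can fuse back into the $W_0$-isotypic whenever dual pairs of isotypic types appear. To control this, I would strengthen the inductive hypothesis to track all isotypic components $(\widehat J^{(n-1)})_i\subset U_i$ simultaneously, showing each lies inside a prescribed subspace $N_i\subset U_i$, where the family $\{N_i\}$ is constructed as the closure of $J$ under the $U_0$-action on each $U_i$ together with the fusion pairings $U_i\otimes U_{i^*}\to U_0$ coming from the isotypic structure, with $N_0=J$ because $J$ is already a $U_0$-ideal. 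Verifying $\cV$-invariance of this family reduces, via associativity of the vertex operator products, to the compatibility of the $U_0$-module structures on the $U_i$ with the fusion of the $W_i$, a verification which again exploits the simplicity of the $W_i$ to describe all $\cW$-submodules of $U_i\otimes W_i$ in the form $N\otimes W_i$.
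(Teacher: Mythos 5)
Your single-application computation is fine, but the heart of your argument --- the inductive step --- is not established, and the proposed fix begs the question. You define the family $\{N_i\}$ as the closure of $J$ under the $U_0$-actions on the $U_i$ and the fusion pairings $U_{i^*}\otimes U_i\to U_0$, and assert $N_0=J$ ``because $J$ is already a $U_0$-ideal.'' Being an ideal of $U_0$ only gives closure under the internal products $U_0\otimes U_0\to U_0$; it says nothing about whether composites that pass through a nontrivial isotypic component, i.e.\ elements of the form $y_{(m)}\bigl(x_{(n)}(j\otimes\mathbf{1})\bigr)$ with $x$ in the $W_i$-isotypic part and $y$ in the $W_{i^*}$-isotypic part, have $W_0$-component back inside $J\otimes\cW$. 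That closure statement is exactly the assertion that $\bigoplus_i N_i\otimes W_i$ is a proper ideal of $\cV$ whenever $J$ is proper, i.e.\ it is (the contrapositive of) the lemma itself; ``associativity of the vertex operator products'' and simplicity of the $W_i$ do not yield it, since the Borcherds identity rewrites an iterated application only as a mixture of single applications and iterated applications in the other order, so the induction never closes. (Simplicity of the $W_i$ does give, as you use, that every $\cW$-submodule of $U_i\otimes W_i$ has the form $N\otimes W_i$, but it does not constrain which $N\subset U_0$ occurs in weight-$0$ isotype.)

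The missing ingredient is the one the paper's proof is built on: Corollary 4.2 of Dong--Mason (quantum Galois theory), which says that for simple $\cV$ and any nonzero $u$, $\cV$ is already spanned by \emph{single} applications $v_{(n)}u$, $v\in\cV$, $n\in\Z$. Granting this, no ideal $\widehat J$, no filtration by generation depth, and no contradiction scaffolding are needed: take $0\neq u\in U_0$, write $\mathbf{1}\otimes\mathbf{1}=\sum_i (v_i)_{(n_i)}(u\otimes\mathbf{1})$, decompose each $v_i$ along the isotypic components, and observe (this is precisely your correct single-application step, via the creativity property $b_{(m)}\mathbf{1}\in W_j$ for $b\in W_j$, Proposition 5.4.7 of FHL) that only the $U_0\otimes\cW$-components of the $v_i$ can contribute to the $W_0$-isotypic part; projecting then exhibits $\mathbf{1}$ in the span of $\{a_{(k)}u: a\in U_0\}$, so the $U_0$-ideal generated by $u$ is all of $U_0$. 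So either quote Dong--Mason (or prove the underlying fact that the span of single-mode applications of $\cV$ to a fixed vector is an ideal) and then run your projection argument directly on that spanning statement, or accept that your closure family $\{N_i\}$ needs an actual proof, which your sketch does not supply. A small additional slip: with $\phi=(\mathrm{id}\otimes\epsilon)\circ\pi_0$ one gets $\phi\bigl(v_{(n)}j\bigr)=\phi(v)_{(n+1)}j$ rather than $\phi(v)_{(n)}j$ (the surviving term is the one with $b_{(-1)}\mathbf{1}=b$); this shift is harmless for membership in $J$, but it signals that the intertwining-operator bookkeeping should be done more carefully than the proposal does.
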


\begin{proof} Let $u\in U_i$. Because $\cV$ is simple there exist $v_1,\dots ,v_r\in \cV$ and $n_1,\dots,
n_r\in \mathbb{Z}$, $r\geq 1$, such that $\sum_{i=1}^r(v_i)_{(n_i)}(u\otimes 1)=1\otimes 1$ 
by Corollary 4.2 of
\cite{DM1}.
Write $$v_i=\sum_i a_{i,j}\otimes b_{i,j},$$ with $a_{i,j}\in U_j$ and $b_{i,j}\in W_j$
for each $i$. Then $$\sum_{i,j} \sum_k (a_{i,j})_{(k)}u \otimes  (b_{i,j})_{(n_i-k)} 1 =1\otimes 1.$$

Since $(b_{i,j})_{(n_i-k)}1\in W_j$, by Proposition 5.4.7 of \cite{FHL},
this means that 
$$\sum_i\sum_k (a_{i,0})_{(k)}u \otimes  (b_{i,0})_{(n_i-k)}1=1\otimes 1.$$
So there exist  $i$ and $m$ such that $(a_{i,0})_{(m)}u=1$, that is, $U_0$ is simple.
\end{proof}

The situation we wish to apply this to is the following. Suppose that $\cW$ and $\cV$ are as above, and that $\cW$ is rational. Then $\cV$ is a direct sum of irreducible $\cW$-modules, and $U_0 = \text{Com}(\cW,\cV)$. We obtain

\begin{cor} \label{simplicity}
Let $\cV$ be a simple  $\Q_{\geq 0}$-graded vertex algebra such that 
$\cV_0=\C$.
If $\cW\subset \cV$ is simple and rational, then $\text{Com}(\cW,\cV)$ is simple.
\end{cor}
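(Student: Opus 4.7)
The plan is to deduce Corollary 3.2 directly from Lemma 3.1 once we identify the multiplicity space $U_0$ with the coset $\mathrm{Com}(\cW,\cV)$. First, I would invoke rationality of $\cW$ to obtain a complete decomposition $\cV = \bigoplus_i U_i \otimes W_i$ of $\cV$ as a $\cW$-module, with $W_0 = \cW$. Since $\cV$ is simple and $\Q_{\geq 0}$-graded with $\cV_0 = \C$, the hypotheses of Lemma 3.1 are satisfied, so that lemma gives us that $U_0$ is a simple vertex algebra. The remaining task is therefore purely structural: to show $U_0 = \mathrm{Com}(\cW,\cV)$ as vertex subalgebras of $\cV$.

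For the identification, I would note that an element $v \in \cV$ lies in $\mathrm{Com}(\cW,\cV)$ precisely when $a_{(n)}v = 0$ for every $a \in \cW$ and every $n \geq 0$, i.e., when $v$ is a vacuum-like vector in the $\cW$-module $\cV$. Under the decomposition above, this condition separates summand by summand, so it suffices to show that a simple $\cW$-module $W_i$ contains a nonzero vacuum-like vector only if $W_i \cong \cW$. If $0 \neq w \in W_i$ satisfies $a_{(n)}w = 0$ for all $a \in \cW$, $n \geq 0$, then the map $\cW \to W_i$, $a \mapsto a_{(-1)}w$, is a $\cW$-module homomorphism sending the vacuum to $w \neq 0$; simplicity of both $\cW$ and $W_i$ together with Schur-type considerations force an isomorphism $\cW \cong W_i$. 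Thus the vacuum-like subspace of $\cV$ equals $U_0 \otimes |0\rangle \cong U_0$, which is exactly $\mathrm{Com}(\cW,\cV)$.

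Putting the pieces together, $\mathrm{Com}(\cW,\cV) \cong U_0$ inherits its vertex algebra structure from $\cV$ in the obvious way (the normally ordered products and OPEs on $U_0$ agree with those induced from $\cV$, since $U_0 \otimes |0\rangle$ is closed under all modes $(-)_{(n)}$), and Lemma 3.1 then yields the desired simplicity. The main obstacle, modest as it is, is the verification that vacuum-like vectors in $\cV$ lie solely in the $W_0$-isotypic component; everything else is essentially bookkeeping, and no further appeal to rationality is needed beyond the initial complete reducibility.
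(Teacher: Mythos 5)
Your proposal is correct and follows essentially the same route as the paper: rationality of $\cW$ gives the complete decomposition $\cV=\bigoplus_i U_i\otimes W_i$, the multiplicity space $U_0$ is identified with $\text{Com}(\cW,\cV)$, and the preceding lemma then gives simplicity. The paper simply asserts the identification $U_0=\text{Com}(\cW,\cV)$, whereas you supply the standard justification via vacuum-like vectors and Li's observation that $a\mapsto a_{(-1)}w$ is a $\cW$-module map; this is a correct filling-in of a detail the paper leaves implicit, not a different argument.
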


\section{Minimal $\cW$-algebras}
Let $\gg$ be a simple, finite-dimensional Lie (super)algebra. We recall some basic properties of the minimal $\cW$-(super)algebra $\cW^k(\gg, e_{-\theta})$ which was introduced by Kac and Wakimoto in \cite{KWI,KWII}. First, suppose that $\{e,f,h\}$ is an $\gs\gl_2$-triple in $\gg$ inducing the minimal $\frac{1}{2} \mathbb{Z}$-gradation 
$$\gg = \gg_{-1} \oplus \gg_{-1/2} \oplus \gg_0 \oplus \gg_{1/2} \oplus \gg_1.$$
Here $\gg_{-1} = \mathbb{C} f$ and $\gg_1 = \mathbb{C} e$, and the above decomposition is the eigenvalue decomposition with respect to $\text{ad}(h)$ where $h = [e,f]$. Define $\gg^{\natural}$ to be the centralizer of $\{e,f,h\}$ in $\gg$. Recall that $\gg_{-1/2}$ and $\gg_{1/2}$ are isomorphic as $\gg^{\natural}$-modules, and that $\gg_{1/2}$ possesses a nondegenerate, skew-supersymmetric bilinear form $\langle \cdot, \cdot \rangle_{\text{ne}}$ given by $$\langle a, b \rangle_{\text{ne}} = (f|[a,b]),$$ where $(\cdot | \cdot)$ denotes the bilinear form on $\gg$.

By Theorem 5.1 of \cite{KWI}, $\cW^k(\gg, e_{-\theta})$ is strongly generated by a Virasoro field $T$, primary weight one fields $J^a$, $a\in \gg^{\natural}$, which generate an affine vertex (super)algebra $V^{k'}(\gg^{\natural})$, and primary weight $3/2$ fields $G^u$, $u \in \gg_{-1/2}$. Moreover, all OPE relations are given explicitly in this theorem. A number of well-known vertex algebras can be realized in this way, such as the Virasoro algebra, $N=2$ superconformal algebra, small $N=4$ superconformal algebra, Bershadsky-Polyakov algebra, etc.

The even part $\gg_0^{\natural} \subset \gg^{\natural}$ is reductive and $\cW^k(\gg, e_{-\theta})$ decomposes into a sum of finite-dimensional $\gg_0^{\natural}$-modules. Therefore the action of $\gg_0^{\natural}$ lifts to an action of a connected Lie group $G^{\natural}_0$ on $\cW^k(\gg, e_{-\theta})$ by automorphisms. In this paper, we are interested in the structure of orbifolds $\cW^k(\gg, e_{-\theta})^G$ where $G \subset G^{\natural}_0$ is a reductive subgroup, and cosets $\text{Com}(V(\gg'), \cW^k(\gg, e_{-\theta}))$ where $V(\gg')\subset V^{k'}(\gg^{\natural})$ is the affine vertex algebra corresponding to a reductive Lie algebra $\gg' \subset \gg^{\natural}$. 

\subsection{Uniqueness of minimal $\cW$-algebras}

Let $\cW^k(\gg,e_{-\theta})$ and $V^{k'}(\gg^{\natural})$ be as above, and let $\cW_k(\gg,e_{-\theta})$ denote the unique simple quotient of $\cW^k(\gg,e_{-\theta})$. Let $\cA_k$ be another vertex (super)algebra with the following properties: 
\begin{enumerate}
\item The central charges of $\cW^k(\gg,e_{-\theta})$ and $\cA_k$ coincide.
\item The full affine sub(super)algebra of $\cA_k$ is a homomorphic image of $V^{k'}(\gg^{\natural})$.
\item $\cA_k$ is strongly generated by the Virasoro field $T$, the weight one and the weight $3/2$ fields and the number of weight $3/2$ strong generators of $\cA_k$ and  $\cW^k(\gg,e_{-\theta})$ are the same, and of the same parity.

\item The weight $3/2$ fields in both $\cA_k$ and $\cW^k(\gg,e_{-\theta})$ have the same OPE with the Virasoro field and the weight one fields. In particular, they carry the same representation of $\gg^{\natural}$.
\end{enumerate}

\begin{thm} \label{uniqueness} Let $\cW^k(\gg,e_{-\theta})$ and $\cA_k$ be as above. Then the OPE algebras of the strong generators of 
$\cW^k(\gg,e_{-\theta})$ and $\cA_k$ coincide up to null fields. In other words $\cA_k$ is a homomorphic image of $\cW^k(\gg,e_{-\theta})$. In particular if $\cA_k$ is simple then $\cA_k\cong \cW_k(\gg,e_{-\theta})$. 
\end{thm}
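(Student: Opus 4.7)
The plan is to exploit the fact that once hypotheses (1)--(4) are in force, essentially every OPE among the strong generators of $\cA_k$ is rigidly determined by $\gg^{\natural}$-equivariance together with associativity and the already-fixed central charge and level. Indeed, (1)--(4) directly specify the OPEs $T\cdot T$, $T\cdot J^a$, $T\cdot G^u$, $J^a\cdot J^b$, and $J^a\cdot G^u$, so the only remaining OPE to analyze is $G^u(z)G^v(w)$, whose singular part lives in weights $0$, $1$, $2$.

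First I would cut the freedom in each singular coefficient $G^u_{(n)}G^v$, $n=0,1,2$, down to finitely many unknown scalars. Because $\cA_k$ is $\Q_{\geq 0}$-graded with one-dimensional weight-zero subspace, and because (3) forces the strong generators to live in weights $1,\tfrac32,2$, the weight-$\leq 2$ subalgebra generated by $T$, $J^a$, $G^u$ is spanned by $1$ in weight zero, by the $J^a$ in weight one, and by $T$, $\partial J^a$, and $:J^aJ^b:$ in weight two. The $\gg^{\natural}$-equivariance required by (2) and (4) then restricts each $G^u_{(n)}G^v$ to lie in $\Hom_{\gg^{\natural}}(\gg_{-1/2}\otimes\gg_{-1/2},-)$ into the appropriate weight space, a finite-dimensional space with a small explicit basis.

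Next I would pin these scalars down by imposing the Jacobi identity on triples $(J^a,G^u,G^v)$. This identity writes $J^a_{(m)}(G^u_{(n)}G^v)$ as a linear combination of terms built out of the already-fixed OPEs $J^a\cdot G^u$, $J^a\cdot J^b$, and $J^a\cdot T$. Because hypothesis (4) guarantees that these inner OPEs in $\cA_k$ coincide with those in $\cW^k(\gg,e_{-\theta})$, the unknown scalars in $\cA_k$ satisfy exactly the same linear system that fixes them in $\cW^k(\gg,e_{-\theta})$; the pieces not reached by Jacobi --- the scalar multiple of $1$ in $G^u_{(2)}G^v$ and the multiple of $T$ in $G^u_{(0)}G^v$ --- are fixed by the central charge (via Virasoro primariness of $G^u$) together with skew-symmetry of OPE under $u\leftrightarrow v$. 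Hence the $G\cdot G$ OPE in $\cA_k$ agrees with that of $\cW^k(\gg,e_{-\theta})$ up to elements that vanish in $\cA_k$, which are by definition null fields.

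With every OPE among strong generators matched modulo null fields, the universal property of $\cW^k(\gg,e_{-\theta})$ produces a surjective vertex algebra homomorphism $\cW^k(\gg,e_{-\theta})\twoheadrightarrow\cA_k$; if $\cA_k$ is simple then its kernel is the unique maximal proper ideal $\cI_k$, yielding $\cA_k\cong\cW_k(\gg,e_{-\theta})$. The main obstacle is the uniqueness step in the previous paragraph: one must verify that the Jacobi linear system on the weight-two coefficients is of maximal rank, so that the basis of $\gg^{\natural}$-equivariant tensors I would write down is linearly independent modulo null relations. For generic $k$ this is essentially the content of the Kac--Wakimoto OPE calculation in \cite{KWI}; for the remaining exceptional $k$ one can argue by continuity of structure constants in the deformable family of Section \ref{sect:defandlim}, propagating the identity modulo null fields from generic to all levels.
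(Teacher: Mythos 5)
Your overall strategy---only the $G^u(z)G^v(w)$ OPE is in question, and it should be forced by associativity from the data fixed in hypotheses (1)--(4)---is the same as the paper's, but the two places where you defer the work are exactly where the content of the proof lies, and the justifications you offer there do not go through. First, the normalization of the third-order pole $G^u_{(2)}G^v$: this is a $\gg^{\natural}$-invariant bilinear form on the weight-$3/2$ space, and neither the central charge nor Virasoro primariness determines it. Primariness only relates the coefficient of $T$ in $G^u_{(0)}G^v$ to the scalar $G^u_{(2)}G^v$, so you cannot bootstrap one from the other without first pinning one of them down absolutely. The paper must prove this form is nonzero---via the $\Z_2$-grading $\cA_k=\cA_k^+\oplus\cA_k^-$ and Proposition 4.1 of \cite{CKL} applied to $\cA_k^-$---and then nondegenerate, using that $\gg_{1/2}$ is irreducible or a sum of two contragredient $\gg^{\natural}$-modules; only after that can the $G^u$ be rescaled to match the third-order poles of $\cW^k(\gg,e_{-\theta})$. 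Your proposal omits this step entirely.

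Second, the uniqueness of the first- and second-order poles. You concede that your Jacobi linear system must be shown to have maximal rank, but both routes you propose fail: the Kac--Wakimoto computation in \cite{KWI} exhibits \emph{a} solution of the associativity constraints (namely the OPEs of $\cW^k$ itself), not that the solution is unique; and there is no deformable family containing the abstract $\cA_k$---it is a single vertex algebra given at one fixed level $k$, so no continuity argument in $k$ is available, and indeed at special levels the conclusion is genuinely weaker (``up to null fields''). The paper closes this gap with no rank computation at all: identities \eqref{pairingi} and \eqref{pairingii} express the pairings $U_{(1)}(V_{(1)}W)$ and $U_{(3)}(V_{(0)}W)$ of the unknown elements $V_{(1)}W\in\cA_k[1]$ and $V_{(0)}W\in\cA_k[2]$ against arbitrary $U$ in those weight spaces purely in terms of already-matched OPE data, so nondegeneracy of the invariant forms $X_{(1)}Y$ on $\cA_k[1]$ and $X_{(3)}Y$ on $\cA_k[2]$ determines $V_{(1)}W$ and $V_{(0)}W$ outright; and when the form on $\cA_k[2]$ is degenerate, its radical lies in the maximal proper ideal, which is precisely the source of the ``up to null fields'' in the statement. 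You should either substitute this pairing argument for your rank claim or actually carry out the rank computation before the proof can be considered complete.
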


\begin{proof} First, the weight $3/2$ subspace of $\cA_k$ has a $\gg^{\natural}$-invariant supersymmetric bilinear form given by $\langle X,Y\rangle = X_{(2)} Y$. This form is nondegenerate: $\cA_k$ has an order two automorphism that acts as $(-1)$ on elements of conformal weight in $\Z+\frac{1}{2}$ and as $1$ on those of integer conformal weight.  Then $\cA_k$ decomposes as
\[
\cA_k = \cA_k^+ \oplus \cA_k^-
\]
where $\cA_k^+$ is the orbifold subVOA. 
Proposition 4.1 of \cite{CKL} applied to $\cA_k^-$ implies that this form is non-zero. Since $\gg_{1/2}$ is either irreducible or is a direct sum of two contragredient modules of $\gg^\natural$ \cite{KWI} this form is nondegenerate.

So we can rescale the weight $3/2$ fields of $\cA_k$ so that the 3rd order poles coincide with the corresponding poles in $\cW^k(\gg,e_{-\theta})$. It remains to compare the first and second order poles between the weight $3/2$ fields of $\cA_k$. For this, we need the following vertex algebra identities. For any elements $U, V, W$ in a vertex superalgebra,
\begin{equation} \label{pairingi} U_{(1)} (V_{(1)}  W) = (U_{(1)}  V)_{(1)}  W  + (-1)^{|U||V|} V_{(1)} (U_{(1)}  W) + (U_{(0)}  V)_{(2)}  W,\end{equation}
\begin{equation} \label{pairingii} 
\begin{split}
U_{(3)}  (V_{(0)}  W) =  &(U_{(3)}  V)_{(0)}  W  + (-1)^{|U||V|} V_{(0)}  (U_{(3)}  W) +\\
&+3 (U_{(2)}  V)_{(1)}  W +3 (U_{(1)} V)_{(2)}  W +  (U_{(0)}  V)_{(3)}  W.
\end{split}
\end{equation}

Take $V$ and $W$ to be weight $3/2$ fields of $\cA_k$ and take $U$ to be a weight one field. Since $V_{(1)} W$ has weight one and the bilinear form on the weight one subspace $\cA_k[1]$ is nondenenerate, it follows from \eqref{pairingi} that $V_{(1)}  W$ is uniquely determined by the known OPEs.

Next, consider the weight $2$ subspace $\cA_k[2]$. It splits as the direct sum of the affine part and the span of the Virasoro element $T$. Suppose first that $V^{k'}(\gg^{\natural})$ has no singular vectors in weight $2$ and that $L = T - L_{\gg}$ has nonzero central charge. Then $\cA_k[2]$ has a nondegenerate bilinear form $\langle X,Y \rangle = X_{(3)}  Y$. Taking $V,W$ to be weight $3/2$ fields and taking $U$ to be a weight $2$ field, it then follows from \eqref{pairingii} that $V_{(0)}  W$ is uniquely determined by the known OPEs.

Finally, suppose that the form on $\cA_k[2]$ is degenerate. Write $\cA_k[2] = A_1 \oplus A_2$ where the form is nondegenerate on $A_1$ and vanishes on $A_2$. Then $A_2$ lies in the maximal proper ideal of $\cA_k$. Applying the above argument only to vectors $U \in A_1$, we see that $V_{(0)} W$ differs from the corresponding OPE in $\cW^k(\gg,e_{-\theta})$ by elements in $A_2$. \end{proof}

\begin{remark}
De Sole proved an interesting related theorem in his Ph.D. thesis (Theorem 1.2.3 of \cite{DeS}). Namely, let $\cW$ be a vertex algebra which is strongly generated by a Virasoro field together with primary fields of weights $1$ and $3/2$. The weight $1$ fields are assumed to generate an affine vertex algebra of some reductive Lie algebra $\gg$, and the weight $3/2$ fields carry a representation $\rho$ of $\gg$. Also, the $\gg$-invariant bilinear form on the weight $3/2$ fields coming from $\rho$ is assumed to be non-degenerate. Finally, $\cW$ is assumed to admit a quasi-classical limit (Definition 5.1.5 of \cite{DeS}). Then he gives a complete list of possible $\gg$ and $\rho$, all of which correspond to minimal $\cW$-algebras. \end{remark}

\subsection{Weak filtrations}
By a {\it compatible filtration} on a vertex algebra $\cA$, we mean an increasing $\mathbb{Z}_{\geq 0}$-filtration
\begin{equation} \cA_{(0)}\subset\cA_{(1)}\subset\cA_{(2)}\subset \cdots,\qquad \cA = \bigcup_{d\geq 0}
\cA_{(d)}\end{equation} such that $\cA_{(0)} = \mathbb{C}$, and for all
$a\in \cA_{(r)}$, $b\in\cA_{(s)}$, we have
\begin{equation} \label{goodi} a_{(n)}  b\in  \bigg\{\begin{matrix}\cA_{(r+s)} & n<0 \\ \cA_{(r+s-1)} & 
n\geq 0 \end{matrix}\ . \end{equation}
We set $\cA_{(-1)} = \{0\}$, and we say that $a(z)\in\cA_{(d)}\setminus \cA_{(d-1)}$ has degree $d$. Such filtrations were introduced in \cite{LiI}, and the key property is that the associated graded algebra $$\text{gr}(\cA) = \bigoplus_{d\geq 0}\cA_{(d)}/\cA_{(d-1)}$$ is a $\mathbb{Z}_{\geq 0}$-graded associative, (super)commutative algebra with a
unit $1$ under a product induced by the Wick product. For $r\geq 1$ we have the projection \begin{equation} \phi_r: \cA_{(r)} \ra \cA_{(r)}/\cA_{(r-1)}\subset \text{gr}(\cA).\end{equation}

 Given a vertex algebra $\cA$ with such a filtration, we have the following reconstruction property. Let $\{a_i|\ i\in I\}$ be a set of generators for $\text{gr}(\cA)$ as a differential algebra, so that $\{\partial^k a_i|\ i\in I, k\geq 0\}$ generates $\text{gr}(\cA)$ as a ring. If $a_i$ is homogeneous of degree $d_i$ and $a_i(z)\in\cA_{(d_i)}$ satisfies $\phi_{d_i}(a_i(z)) = a_i$, then $\cA$ is strongly generated as a vertex algebra by $\{a_i(z)|\ i\in I\}$.

In \cite{ACL}, a {\it weak filtration} on a vertex algebra $\cA$ was defined to be an increasing $\mathbb{Z}_{\geq 0}$-filtration
\begin{equation}\label{weak} \cA_{(0)}\subset\cA_{(1)}\subset\cA_{(2)}\subset \cdots,\qquad \cA = \bigcup_{d\geq 0}
\cA_{(d)}\end{equation} such that for $a\in \cA_{(r)}$, $b\in\cA_{(s)}$, we have
\begin{equation} a_{(n)}  b \in \cA_{r+s},\qquad n\in \mathbb{Z}.\end{equation}
This condition guarantees that $\text{gr}(\cA) = \bigoplus_{d\geq 0}\cA_{(d)}/\cA_{(d-1)}$ is a vertex algebra, but it is no longer abelian in general. As above, a strong generating set for $\text{gr}(\cA)$ consisting of homogeneous elements always gives rise to a strong generating set for $\cA$.

We define an increasing filtration 
$$\cW^k_{(0)} \subset \cW^k_{(1)} \subset \cdots $$ on $\cW^k = \cW^k(\gg, e_{-\theta})$ as follows: 
$\cW^k_{(-1)} = \{0\}$, and $\cW^k_{(r)}$ is spanned by iterated Wick products of $T, J^a, G^u$ and their derivatives, such that at most $r$ of the fields $G^u$ and their derivatives appear. It is clear from the defining OPE relations \cite{KWI} that this is a weak filtration. Note that $\cW^k_{(0)} \cong \text{Vir} \ltimes V^{k'}(\gg^{\natural})$ where $\text{Vir}$ denotes the Virasoro vertex algebra with generator $T$. Note also that the associated graded algebra $$\text{gr}(\cW^k) = \bigoplus_{d \geq 0} \cW^k_{(d)} / \cW^k_{(d-1)}$$ is not abelian since it contains $\cW^k_{(0)}$ as a subalgebra, but $G^u(z) G^v(w) \sim 0$ in this algebra for all $u,v$.

\section{Deformations and limits of minimal $\cW$-algebras} \label{sect:defandlim}
In this section, we shall adapt the methods of our previous papers on orbifolds and cosets of free field and affine vertex algebras to the setting of minimal $\cW$-algebras. First, recall the notion of a {\it deformable family} of vertex algebras \cite{CLI}. Let $K \subset \mathbb{C}$ be a subset which is at most countable, and let $F_K$ denote the $\mathbb{C}$-algebra of rational functions in a formal variable $\kappa$ of the form $\frac{p(\kappa)}{q(\kappa)}$ where $\text{deg}(p) \leq \text{deg}(q)$ and the roots of $q$ lie in $K$. A {\it deformable family} will be a free $F_K$-module $\cB$ with the structure of a vertex algebra with coefficients in $F_K$.  Vertex algebras over $F_K$ are defined in the same way as vertex algebras over $\mathbb{C}$. We assume that $\cB$ possesses a $\mathbb{Z}_{\geq 0}$-grading $\cB = \bigoplus_{m\geq 0} \cB[m]$ by conformal weight where each $\cB[m]$ is a free $F_K$-module of finite rank. For $k\notin K$, we have a vertex algebra $$\cB^k = \cB / (\kappa - k),$$ where $(\kappa - k)$ is the ideal generated by $\kappa - k$. Clearly $\text{dim}_{\mathbb{C}}(\cB^k[m]) = \text{rank}_{F_K} (\cB[m])$ for all $k \notin K$ and $m\geq 0$. We have a vertex algebra $\cB^{\infty} = \lim_{\kappa\ra \infty} \cB$ with basis $\{\alpha_i|\ i\in I\}$, where $\{a_i|\ i \in I\}$ is any basis of $\cB$ over $F_K$, and $\alpha_i = \lim_{\kappa \ra \infty} a_i$. By construction, $\text{dim}_{\mathbb{C}}(\cB^{\infty}[m]) = \text{rank}_{F_K}(\cB[m])$ for all $m\geq 0$. The vertex algebra structure on $\cB^{\infty}$ is defined by \begin{equation} (\alpha_i)_{(n)}  (\alpha_j) = \lim_{\kappa \ra \infty} (a_i)_{(n)}  (a_j), \qquad i,j\in I, \qquad n\in \mathbb{Z}. \end{equation} The $F_K$-linear map $\phi: \cB \ra \cB^{\infty}$ sending $a_i \mapsto \alpha_i$ satisfies \begin{equation} \label{preservecircle} \phi(\omega_{(n)}  \nu) = \phi(\omega)_{(n)}  \phi(\nu), \qquad \omega,\nu \in \cB, \qquad n\in \mathbb{Z}.\end{equation} 

Let $U = \{\alpha_i|\ i\in I\}$ be a strong generating set for $\cB^{\infty}$, and let $T = \{a_i|\ i\in I\}$ be the corresponding subset of $\cB$, so that $\phi(a_i) = \alpha_i$. By Lemma 8.1 of \cite{CLI}, the set $T$ closes under OPE. In other words, each term appearing in the OPE of $a_i(z) a_j(w)$ for $i,j\in I$ can be expressed as a linear combination of normally ordered monomials of the form \begin{equation} \label{nop} :\partial^{k_1} a_{i_1}(w) \cdots \partial^{k_r} a_{i_r}(w):,\end{equation} for $i_1,\dots, i_r \in I$ and $k_1,\dots, k_r \geq 0$. The {\it structure constants}, i.e., the coefficients of these monomials, are rational function of $\kappa$ whose poles need not lie in $K$. Let $D$ be the set of all poles of structure constants appearing in the OPE of $a_i(z) a_j(w)$ for $i,j\in I$.

\begin{lemma}[\cite{CLII}, Lemma 3.4] \label{passage} Let $K\subset \mathbb{C}$ be at most countable, and let $\cB$ be a vertex algebra over $F_K$ as above. Let $U = \{\alpha_i|\ i\in I\}$ be a strong generating set for $\cB^{\infty}$, and let $T = \{a_i|\ i\in I\}$ be the corresponding subset of $\cB$, so that $\phi(a_i) = \alpha_i$. Letting $S = K \cup D$, $F_S \otimes_{F_K}\cB$ is strongly generated by $T$. Here we have identified $T$ with the set $\{1 \otimes a_i|\ i\in I\} \subset F_S \otimes_{F_K} \cB$. \end{lemma}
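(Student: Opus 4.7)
The plan is to show that the $F_S$-submodule $\cB' \subset F_S \otimes_{F_K} \cB$ spanned by normally ordered monomials in $T$ and their derivatives equals all of $F_S \otimes_{F_K} \cB$. The starting observation is that since $D$ collects exactly the poles of the structure constants appearing in the OPEs of $T$, the set $T$ closes under every circle product in $F_S \otimes_{F_K} \cB$, so $\cB'$ is in fact a vertex subalgebra over $F_S$ (not just an $F_S$-submodule closed under Wick products). The argument then proceeds weight by weight, exploiting the evaluation $\operatorname{ev}_\infty \colon F_S \to \mathbb{C}$ induced by $\kappa \to \infty$ and the identification $(F_S \otimes_{F_K} \cB) \otimes_{F_S,\operatorname{ev}_\infty} \mathbb{C} \cong \cB^\infty$.

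At conformal weight $m$, both $\cB'[m]$ and $(F_S \otimes_{F_K} \cB)[m]$ are finitely generated $F_S$-modules, the latter free of rank $d_m := \operatorname{rank}_{F_K} \cB[m] = \dim_{\mathbb{C}} \cB^\infty[m]$. Since $U$ strongly generates $\cB^\infty$, I select normally ordered monomials $M_1,\dots,M_{d_m}$ in $U$ and their derivatives forming a $\mathbb{C}$-basis of $\cB^\infty[m]$; by the compatibility \eqref{preservecircle} of $\phi$ with circle products, their canonical lifts $\tilde M_1,\dots,\tilde M_{d_m}$ (the corresponding Wick monomials in $T$) lie in $\cB[m] \subseteq \cB'[m]$ and satisfy $\phi(\tilde M_i) = M_i$.

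Expressing $\tilde M_i = \sum_j C_{ij} e_j$ in an $F_K$-basis $\{e_j\}$ of $\cB[m]$ yields a matrix $C \in M_{d_m}(F_K)$ with $\operatorname{ev}_\infty(C)$ invertible over $\mathbb{C}$, whence $\det C \in F_K$ has nonzero limit at $\kappa = \infty$. The core technical step is to verify that $\det C$ is a unit in $F_S$---equivalently, that its zeros lie in $S = K \cup D$---so that $C^{-1} \in M_{d_m}(F_S)$ and the $\tilde M_i$ form an $F_S$-basis of $(F_S \otimes_{F_K} \cB)[m]$. This is to be established by tracking the denominators produced as the iterated Wick products defining each $\tilde M_i$ are re-expanded in the basis $\{e_j\}$: the structural description from Lemma 8.1 of \cite{CLI} restricts any such denominator to lie in $D$, while the nonvanishing of $\det C$ at $\kappa = \infty$ limits the way in which zeros could combine to produce a zero outside $D$.

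Once $C$ is invertible over $F_S$, it follows that $\cB'[m] = (F_S \otimes_{F_K} \cB)[m]$, and assembling the weights gives strong generation of $F_S \otimes_{F_K} \cB$ by $T$. The main obstacle is precisely the determinant argument above: a priori, $\det C$ could vanish at any point of $\mathbb{C}$, and confining its zero locus to $S$ requires either a careful combinatorial analysis of how iterated Wick products inherit denominators from the OPE structure constants, or, alternatively, a Nakayama-type localization argument combined with a specialization step that rules out any proper containment $\cB'/(\kappa - k) \subsetneq \cB^k$ at $k \in \mathbb{C} \setminus S$. This bookkeeping is the heart of the lemma, and everything else is formal.
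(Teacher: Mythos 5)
The paper does not actually prove this lemma --- it is quoted verbatim from \cite{CLII}, Lemma 3.4 --- so there is no in-text argument to compare against. Your setup is the natural one and presumably matches the cited proof in outline: $\cB'$ is indeed a vertex subalgebra over $F_S$ because the structure constants of $T$ have poles in $D\subset S$; the lifts $\tilde M_1,\dots,\tilde M_{d_m}$ of a monomial basis of $\cB^\infty[m]$ lie in $\cB[m]$ and satisfy $\phi(\tilde M_i)=M_i$ by \eqref{preservecircle}; and the lemma is equivalent to $\det C$ being a unit of $F_S$ in every weight. The problem is that you stop exactly there. Confining the zero locus of $\det C$ to $S=K\cup D$ is not ``bookkeeping'' --- it is the entire content of the lemma. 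What your argument actually establishes is only the much weaker statement that \emph{some} at most countable set $S'\supset K$ works, namely $K$ together with the union over all $m$ of the (finite) zero sets of the $\det C_m$; that weaker version uses neither $D$ nor the closure of $T$ under OPE.

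Neither of the two routes you sketch for closing the gap is viable as stated. The entries of $C$ lie in $F_K$, because each $\tilde M_i$ is an honest element of the free $F_K$-module $\cB[m]$; their denominators are therefore already confined to $K$, and there are no new denominators to ``track'' when re-expanding Wick monomials in the basis $\{e_j\}$. Lemma 8.1 of \cite{CLI} controls the \emph{poles} of the OPE structure constants, which is not the issue: the danger is the \emph{zeros} of $\det C$, i.e., levels $k_0\notin K\cup D$ at which the $\tilde M_i$ become linearly dependent in $\cB^{k_0}[m]$ even though every structure constant is regular at $k_0$ and the specialization at $\infty$ is nondegenerate. Ruling this out is the real work; it requires an argument (for instance an induction on conformal weight that propagates spanning from weights $<m$ to weight $m$ via the circle products of the generators) tying any such degeneration back to a pole of a structure constant. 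The Nakayama alternative also fails: $F_S$ is not local --- its maximal ideals correspond to the points of $(\mathbb{C}\setminus S)\cup\{\infty\}$ --- and Nakayama at the ideal of functions vanishing at $\infty$ only recovers surjectivity at the single point $\infty$, which is precisely what you already know from strong generation of $\cB^\infty$; it says nothing about the points of $\mathbb{C}\setminus S$ where $\det C$ might vanish. In short, the proposal is a correct reduction of the lemma to its essential difficulty, not a proof of it.
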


\begin{cor} \label{passagecor} For $k\notin S$, the vertex algebra $\cB^k = \cB/ (\kappa -k)$ is strongly generated by the image of $T$ under the map $\cB \ra \cB^k$.
\end{cor}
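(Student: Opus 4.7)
The plan is to deduce the corollary from Lemma \ref{passage} by specializing the formal variable $\kappa$ to the value $k$. The hypothesis $k \notin S = K \cup D$ does two independent things. First, $k \notin K$ makes the quotient $\cB^k = \cB/(\kappa - k)$ a well-defined complex vertex algebra as in the original deformable-family setup. Second, $k \notin D$ means that no pole of a structure constant of the generators $T$ sits at $\kappa = k$, so evaluation at $k$ makes sense on all the coefficients that can appear.

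First I would note that for $k \notin S$, the element $\kappa - k$ is not a unit in $F_S$, so $F_S/(\kappa - k)$ is a well-defined quotient ring, and the evaluation map $\kappa \mapsto k$ gives a canonical isomorphism $F_S/(\kappa - k) \cong \mathbb{C}$. Tensoring with $\cB$ over $F_K$ and using $k \notin K$ yields a canonical isomorphism of vertex algebras $(F_S \otimes_{F_K} \cB)/(\kappa - k) \cong \cB/(\kappa - k) = \cB^k$, and hence a surjective vertex algebra homomorphism
\[
\pi^k \colon F_S \otimes_{F_K} \cB \;\twoheadrightarrow\; \cB^k.
\]

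Next, by Lemma \ref{passage}, the set $T = \{a_i \mid i\in I\}$ strongly generates $F_S \otimes_{F_K} \cB$, so every element is an $F_S$-linear combination of iterated normally ordered monomials of the form $:\partial^{k_1} a_{i_1} \cdots \partial^{k_r} a_{i_r}:$ with $i_1,\dots, i_r \in I$ and $k_1,\dots,k_r \geq 0$. Since $\pi^k$ is a vertex algebra homomorphism it is $\mathbb{C}$-linear, commutes with $\partial$, and preserves normally ordered products. Applying $\pi^k$ to such an expansion therefore produces a $\mathbb{C}$-linear combination (with scalar coefficients obtained by evaluating the $F_S$-valued coefficients at $\kappa = k$, which is legitimate precisely because $k \notin D$) of normally ordered monomials in the $\pi^k(a_i)$ and their derivatives. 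Combined with the surjectivity of $\pi^k$, this shows that the image of $T$ under the projection $\cB \to \cB^k$ strongly generates $\cB^k$, as required.

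I do not anticipate any serious obstacle: the corollary is a formal consequence of Lemma \ref{passage} together with the observation that vertex algebra homomorphisms carry strong generating sets of the source to strong generating sets of the image. The only point requiring a little care is the legitimacy of specializing the $F_S$-valued structure constants at $\kappa = k$, and this is precisely the role of $D$ in the definition of $S$.
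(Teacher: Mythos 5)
Your proof is correct and is essentially the argument the paper intends: the corollary is stated without proof as an immediate consequence of Lemma \ref{passage}, obtained by specializing $F_S \otimes_{F_K} \cB$ at $\kappa = k$ (legitimate since $k \notin S$) and using that a surjective vertex algebra homomorphism carries a strong generating set onto a strong generating set of the image. Your explicit attention to why the evaluation of the $F_S$-valued structure constants at $\kappa = k$ is well defined is exactly the point encoded in the definition of $S = K \cup D$.
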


If $U$ is a {\it minimal} strong generating set for $\cB^{\infty}$, $T$ need not be a minimal strong generating set for $\cB$, since there may exist relations of the form $\lambda(k) \alpha_j = P$, where $P$ is a normally ordered polynomial in $\{\alpha_i|\ i\in I,\ i\neq k\}$ and $\lim_{k\ra \infty} \lambda(k) = 0$, although $\lim_{k\ra \infty} P$ is a nontrivial. However, there is one condition which holds in many examples, under which $T$ is a minimal strong generating set for $\cB$.

\begin{prop}[\cite{CLII}, Lemma 3.6] Suppose that $U = \{\alpha_i|\ i\in I\}$ is a minimal strong generating set for $\cB^{\infty}$ such that $\text{wt}(\alpha_i) <N$ for all $i\in I$. If there are no normally ordered polynomial relations among $\{\alpha_i|\ i\in I\}$ and their derivatives of weight less than $N$, the corresponding set $T = \{a_i|\ i\in I\}$ is a minimal strong generating set for $\cB$. \end{prop}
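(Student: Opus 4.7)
The plan is to argue by contradiction. That $T$ strongly generates $\cB$, after inverting the poles in $D$, is already established by Lemma \ref{passage} and Corollary \ref{passagecor}, so the content of the proposition is really minimality. Suppose therefore that $T$ is not a minimal strong generating set. Then some $a_j \in T$ can be eliminated, meaning there is a nontrivial normally ordered polynomial identity
\[
a_j \;=\; \sum_I c_I(\kappa)\; {:}m_I{:}
\]
in $F_S \otimes_{F_K}\cB$, where each $m_I$ is a normally ordered monomial in $\{a_i : i\in I,\ i\neq j\}$ and their derivatives of conformal weight $\text{wt}(a_j) < N$, and each coefficient $c_I(\kappa)\in F_S$ has a finite value $c_I(\infty) \in \C$ (elements of $F_S$ being rational in $\kappa$ of degree at most zero).

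The key step is to push this relation through the limit map $\phi\colon \cB \to \cB^\infty$, extended by evaluation at $\kappa = \infty$ to $F_S \otimes_{F_K}\cB$. Since $\phi$ sends $a_i \mapsto \alpha_i$ and respects every $n$-th product by \eqref{preservecircle}, an induction on the depth of iterated Wick products using \eqref{iteratedwick} shows that $\phi$ carries any normally ordered polynomial in the $a_i$ to the corresponding normally ordered polynomial in the $\alpha_i$ with each coefficient $c_I$ replaced by $c_I(\infty)$. Applying $\phi$ to the displayed identity therefore yields
\[
\alpha_j \;=\; \sum_I c_I(\infty)\; {:}\mu_I{:}
\]
in $\cB^\infty$, where $\mu_I$ is the same monomial in $\{\alpha_i : i\neq j\}$ and their derivatives as $m_I$ was in the $a_i$.

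This produces a contradiction in either of two cases. If some $c_I(\infty)$ is nonzero, the displayed equation is a nontrivial normally ordered polynomial relation among $\{\alpha_i\}_{i\in I}$ and their derivatives of weight $\text{wt}(\alpha_j) < N$, directly violating hypothesis (3). If instead every $c_I(\infty)$ vanishes, then $\alpha_j = 0$ in $\cB^\infty$, contradicting that $\alpha_j$ is a nonzero element of a strong generating set for $\cB^\infty$. The main delicate point, and really the only one, is the bookkeeping that ensures $\phi$ intertwines the formation of iterated Wick products on the two sides, so that evaluation of rational structure constants at $\kappa = \infty$ commutes with building up normally ordered monomials; this is precisely what \eqref{preservecircle} provides, so once that compatibility is spelled out the argument is essentially immediate.
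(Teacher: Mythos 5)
Your argument is correct. Note that the paper itself gives no proof of this proposition --- it is quoted from Lemma 3.6 of [CLII] --- but your proof is exactly the intended one: it matches the argument in that reference, and it is the precise converse of the failure mode described in the paragraph immediately preceding the statement (a decoupling relation $\lambda(\kappa)a_j = P$ whose limit degenerates to a nontrivial relation of weight less than $N$ among the $\alpha_i$ rather than an expression for $\alpha_j$). The two points you flag --- that every coefficient in $F_S$ has a finite value at $\kappa=\infty$ because such functions have degree at most zero, and that the evaluation map intertwines all $n$-th products and hence iterated Wick products and derivatives --- are indeed the only things that need checking, and your case split (some $c_I(\infty)\neq 0$ contradicts the no-relations hypothesis; all $c_I(\infty)=0$ forces $\alpha_j=0$, contradicting minimality of $U$) is airtight.
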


In our main example $\cW^k(\gg,e_{-\theta})$, suppose that $$\text{dim}(\gg^{\natural}_0) = n,\qquad \text{dim}(\gg^{\natural}_1) = 2m, \qquad \text{dim}(\gg^{\text{ev}}_{-1/2}) = 2r,\qquad \text{dim}(\gg^{\text{odd}}_{-1/2}) = s.$$ If we replace the generators $J^a$, $L$, and $G^u$ with $\tilde{J}^a = \frac{1}{\sqrt{k}} J^a$, $\tilde{L} = \frac{1}{\sqrt{k}}L$, and $\tilde{G}^u = \frac{1}{k} G^u$, respectively, then it follows from Theorem 5.1 of \cite{KWI} that all structure constants are rational functions of $\sqrt{k}$ of degree at most zero. Moreover, the only structure constants which have degree zero as rational functions of $\sqrt{k}$ are the following.
\begin{enumerate}
\item The second-order pole of $\tilde{J}^a(z) \tilde{J}^b(w)$ when $(a|b) \neq 0$.
\item The third-order pole of $\tilde{G}^u(z) \tilde{G}^v(w)$ when $\langle u,v\rangle_{\text{ne}} \neq 0$.
\item The fourth-order pole of $\tilde{L}(z)\tilde{L}(w)$.
\end{enumerate}

Let $\kappa$ be a formal variable satisfying $\kappa^2 = k$, and let $F = F_K$ for $K =  \{0\}$. Let $\cW$ be the vertex algebra with coefficients in $F$ which is freely generated by $\bar{J}^a, \bar{L}, \bar{G}^u$ satisfying the rescaled OPE relations. It follows that for $k\neq 0$, we have a surjective vertex algebra homomorphism $$\cW \ra \cW^k(\gg, e_{-\theta}),\qquad \bar{J}^a \mapsto \frac{1}{\sqrt{k}} J^a,\qquad \bar{L} \mapsto \frac{1}{\sqrt{k}} L,\qquad \bar{G}^u \mapsto \frac{1}{k} G^u,$$ whose kernel is the ideal $(\kappa - \sqrt{k})$. Then $\cW^k(\gg, e_{-\theta}) \cong \cW/ (\kappa - \sqrt{k})$, and the limit is given by $$\cW^{\infty} = \lim_{\kappa \ra\infty} \cW \cong \cH(n) \otimes \cA(m) \otimes \cT \otimes \cG_{\text{ev}}(r)\otimes \cG_{\text{odd}}(s).$$ In this notation,
\begin{enumerate}
\item $\cH(n)$ is the rank $n$ Heisenberg algebra, which has even generators $\alpha^i$, $i=1,\dots, n$, and operator products
$$\alpha^i(z) \alpha^j(w) \sim \delta_{i,j}(z-w)^{-2}.$$ 
\item $\cA(m)$ is the rank $m$ symplectic fermion algebra, which has odd generators $e^i, f^i$, $i=1,\dots, m$, and operator products 
$$e^i(z) f^j(w) \sim \delta_{i,j}(z-w)^{-2}.$$ 
\item $\cT$ is a generalized free field algebra with even generator $t(z)$ in weight two satisfying $$t(z)t(w) \sim (z-w)^{-4}.$$ 
\item $\cG_{\text{ev}}(r)$ is a generalized free field algebra with even generators $a^i, b^i$, $i=1,\dots, r$, and operator products 
$$a^i(z) b^j(w) \sim \delta_{i,j}(z-w)^{-3}.$$ 
\item $\cG_{\text{odd}}(s)$ is a generalized free field algebra with odd generators $\phi^i$, $i=1,\dots, s$, and operator products 
$$\phi^i(z) \phi^j(w) \sim \delta_{i,j}(z-w)^{-3}.$$
\end{enumerate}

Note that $\cT$, $\cG_{\text{ev}}(r)$, and $\cG_{\text{odd}}(s)$ do not possess conformal vectors. However, they have quasi-conformal structures \cite{FBZ} such that $t$ has weight $2$, and $a^i, b^i, \phi^i$ have weight $3/2$. To see this, note that $\cT$ can be regarded as the subalgebra of the rank one Heisenberg algebra generated by the derivative of the generator. Similarly, $\cG_{\text{ev}}(r)$ can be regarded as the subalgebra of the rank $r$ $\beta\gamma$-system generated by the derivatives of the generators. Finally, $\cG_{\text{odd}}(s)$ can be regarded as the subalgebra of the rank $s$ free fermion algebra generated by the derivatives of the generators. The quasi-conformal structures on $\cT$, $\cG_{\text{ev}}(r)$, and $\cG_{\text{odd}}(s)$ are then inherited from the conformal structures on the Heisenberg algebra, $\beta\gamma$-system, and free fermion algebra, respectively.

The full automorphism groups of $\cG_{\text{ev}}(r)$ and $\cG_{\text{odd}}(s)$ which preserve the weight gradation are the symplectic group $\text{Sp}(2r)$ and the orthogonal group $\text{O}(s)$, respectively. It is necessary to understand the structure of orbifolds of $\cG_{\text{ev}}(r)$ and $\cG_{\text{odd}}(s)$ under arbitrary reductive groups. As in our previous studies of orbifolds of free field algebras, we first need to describe $\cG_{\text{ev}}(r)^{\text{Sp}(2r)}$ and $\cG_{\text{odd}}(s)^{\text{O}(s)}$.

\begin{thm} \label{thm:sp-invariant} $\cG_{\text{ev}}(r)^{\text{Sp}(2r)}$ has a minimal strong generating set $$\omega^{2j+1} = \frac{1}{2} \sum_{i=1}^r \big(:a^i \partial^{2j+1} b^i: \  - \  :(\partial^{2j+1} a^i )b^i:\big),\qquad 0\leq j \leq r^2+3r-1,$$ and is therefore of type $\cW(4,6,\dots, 2r^2+6r+2)$. Moreover, $\cG_{\text{ev}}(r)$ is completely reducible as a $\cG_{\text{ev}}(r)^{\text{Sp}(2r)}$-module, and all irreducible modules in this decomposition are highest-weight and $C_1$-cofinite according to Miyamoto's definition \cite{Mi}.
\end{thm}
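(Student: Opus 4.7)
The plan is to mirror the classical-invariant-theory approach to free-field orbifolds developed in the series \cite{LI, LII, LIII, LIV, LV, CLIII}. First, I would equip $\cG_{\text{ev}}(r)$ with the good increasing filtration obtained by assigning degree $1$ to each generator $a^i, b^i$. Because the only nontrivial OPE is $a^i(z) b^j(w) \sim \delta_{ij}(z-w)^{-3}$ with scalar residue, this is a compatible good filtration, and the associated graded ring is identified $\Sp(2r)$-equivariantly with the commutative polynomial algebra
\begin{equation*}
\text{gr}\bigl(\cG_{\text{ev}}(r)\bigr) \;\cong\; \C\bigl[\partial^k a^i, \partial^k b^i \mid 1\leq i\leq r,\, k\geq 0\bigr],
\end{equation*}
viewed as the symmetric algebra on infinitely many copies $V_k$ of the standard $2r$-dimensional symplectic representation, where $V_k$ is spanned by $\{\partial^k a^i, \partial^k b^i\}_i$. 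The standard reconstruction principle for good filtrations (cf.\ Section 2) reduces the proof to finding a minimal set of differential-algebra generators for $\text{gr}(\cG_{\text{ev}}(r))^{\Sp(2r)}$.

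For this I would invoke Weyl's first fundamental theorem for $\Sp(2r)$: the $\Sp(2r)$-invariants on the symmetric algebra of any number of copies of the standard representation are generated by the quadratic symplectic pairings
\begin{equation*}
q_{j,l} \;=\; \sum_{i=1}^{r}\bigl(\partial^{j} a^i \cdot \partial^{l} b^i \;-\; \partial^{l} a^i \cdot \partial^{j} b^i\bigr) \;=\; -q_{l,j},
\end{equation*}
so in particular $q_{j,j}=0$. From the Leibniz identity $\partial q_{j,l} = q_{j+1,l}+q_{j,l+1}$ together with the vanishing $q_{j,j}=0$, an induction on $j+l$ shows that every $q_{j,l}$ is a $\C$-linear combination of derivatives of the odd-index invariants $\{q_{0,2m+1}\}_{m\geq 0}$; that this linear map is actually surjective onto all quadratic invariants is forced by a direct dimension count in each conformal weight $3+n$. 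Lifting back through the filtration, we obtain that $\{\omega^{2m+1}\}_{m\geq 0}$ strongly generates $\cG_{\text{ev}}(r)^{\Sp(2r)}$ as an infinite strong generating set.

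To truncate to a finite set, I would apply Weyl's second fundamental theorem for $\Sp(2r)$: the ideal of relations among the pairings on any number of copies of the standard representation is generated by the Pfaffian identities on $(2r+2)\times(2r+2)$ skew-symmetric matrices of these pairings. Applied to the copies $V_0, V_1, \dots, V_{2r+1}$, this Pfaffian relation is homogeneous of conformal weight
\begin{equation*}
(2r+2)\cdot\tfrac{3}{2} \;+\; \bigl(0+1+\cdots+(2r+1)\bigr) \;=\; 2r^2+6r+4,
\end{equation*}
and takes the form ``$q_{0,\,2r^2+6r+1}$ equals a $\partial$-polynomial in the lower $q_{0,2m+1}$''. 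Applying $\partial$ iteratively and solving for the top generator at each step then eliminates every $q_{0,2j+1}$ with $j\geq r^2+3r$. Lifting back yields the asserted strong generating set $\{\omega^{2j+1}\}_{0\leq j\leq r^2+3r-1}$. Minimality is confirmed by computing the Hilbert series of $\text{gr}(\cG_{\text{ev}}(r))^{\Sp(2r)}$ via the Molien-type formula for $\Sp(2r)$ and comparing it with the Hilbert series of the free differential polynomial algebra on the listed generators; they agree in every weight $\leq 2r^2+6r+2$, so no earlier decoupling is possible.

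The main obstacle is the combinatorial bookkeeping in the previous paragraph: one must show that the coefficient of $q_{0,\,2r^2+6r+1}$ in the Pfaffian identity is nonzero, so that one can actually solve for it, and must track that iterated $\partial$'s of the Pfaffian relation eliminate every higher $q_{0,2j+1}$. The Pfaffian is a signed sum over $(2r+1)!!$ perfect matchings of the $2r+2$ copies, and confirming the nontrivial top coefficient is the delicate point. Once this is established, complete reducibility of $\cG_{\text{ev}}(r)$ as a $\cG_{\text{ev}}(r)^{\Sp(2r)}$-module follows from the reductivity of $\Sp(2r)$ and finite-dimensionality of weight spaces, giving a Howe-pair decomposition
\begin{equation*}
\cG_{\text{ev}}(r) \;=\; \bigoplus_\lambda M_\lambda \otimes L(\lambda),
\end{equation*}
in which each multiplicity space $M_\lambda$ is an irreducible $\cG_{\text{ev}}(r)^{\Sp(2r)}$-module. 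Their highest-weight structure and $C_1$-cofiniteness in Miyamoto's sense then follow, using the strong finite generation of the orbifold just established, from the general criterion developed in \cite{LIV}.
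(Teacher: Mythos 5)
Your overall strategy is the same one the paper uses (it simply cites the relevant results of \cite{LV}): pass to the associated graded algebra, apply Weyl's first and second fundamental theorems for $\Sp(2r)$, and deduce decoupling from the first Pfaffian relation at weight $2r^2+6r+4$. However, there is a genuine gap at precisely the step you flag as ``the delicate point,'' and the way you have set it up cannot be repaired as written. The classical Pfaffian relation among the quadratics $q_{j,l}$ is homogeneous of degree $r+1$ in those quadratics and contains \emph{no} linear term whatsoever; classically, $\text{gr}(\cG_{\text{ev}}(r))^{\Sp(2r)}$ is \emph{not} finitely generated as a differential algebra, and no $q_{0,2j+1}$ is ever expressible as a polynomial in the lower ones. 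The decoupling is a purely quantum phenomenon: one must lift the classical Pfaffian relation to a normally ordered polynomial in the $\omega^{2j+1}$ inside the vertex algebra, where the double and higher contractions produce quantum correction terms, and among these corrections a term $\lambda\,\omega^{2r^2+6r+1}$ with $\lambda$ \emph{a priori} possibly zero. Showing $\lambda\neq 0$ is exactly the content of the statement $R_r(I)\neq 0$ for $I=(1,2,\dots,2r+2)$ in the notation of Equation 9.1 of \cite{LV}, and the paper disposes of it by appealing to the closed formula for $R_r(I)$ in Theorem 4 of \cite{LV}. Your proposal acknowledges the issue but offers no argument, and moreover mislocates it by asserting that the \emph{classical} identity already has the form ``top generator equals a polynomial in the lower ones.''

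A second, smaller error: to propagate the first decoupling relation $\lambda\,\omega^{2N+1}=P(\omega^1,\dots,\omega^{2N-1})$ to all higher generators, applying $\partial$ does not work --- differentiating yields a relation for $\partial\omega^{2N+1}$, which decouples nothing new. The correct mechanism (used explicitly in Section 6 of the paper and in \cite{LV}) is to apply a raising operator such as $(\omega^{1})_{(1)}$, which satisfies $(\omega^{1})_{(1)}\omega^{2j+1}=c_j\,\omega^{2j+3}+(\text{lower terms})$ with $c_j\neq 0$, and thereby produces decoupling relations for $\omega^{2j+1}$ for all $j\geq r^2+3r$ by induction. Finally, for the last assertions, note that the paper obtains the highest-weight property from the fact that the Zhu algebra of $\cG_{\text{ev}}(r)^{\Sp(2r)}$ is abelian, and the $C_1$-cofiniteness by the argument of Lemma 8 of \cite{LII}; your appeal to a ``general criterion in \cite{LIV}'' is too vague to count as a proof of these points.
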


\begin{proof} The first statement can be reduced to showing that, in the notation of Equation 9.1 of \cite{LV}, $R_r(I) \neq 0$ for $I = (1,2,\dots,2r+2)$. The explicit formula for $R_r(I)$ is given by Theorem 4 of \cite{LV}, and it is clear that it is nonzero. The Zhu algebra \cite{Z} of $\cG_{\text{ev}}(r)^{\text{Sp}(2r)}$ is abelian, which implies that its irreducible, positive energy representations are all highest-weight. The proof of $C_1$-cofiniteness is the same as the proof of Lemma 8 of \cite{LII}. \end{proof}

\begin{cor} For any reductive group $G\subset \text{Sp}(2r)$, $\cG_{\text{ev}}(r)^G$ is strongly finitely generated.
\end{cor}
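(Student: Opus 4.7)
The plan is to follow the template established for orbifolds of free field algebras in \cite{LI, LII, LIV, CLIII}. By Theorem~\ref{thm:sp-invariant}, $\cG_{\text{ev}}(r)$ is completely reducible as a $\cG_{\text{ev}}(r)^{\Sp(2r)}$-module, and all irreducible summands are highest-weight and $C_1$-cofinite. Combined with the fact that each $\Sp(2r)$-isotypic component is finite-dimensional in each conformal weight, this yields a Howe-type decomposition
$$\cG_{\text{ev}}(r) = \bigoplus_\nu L(\nu)\otimes M^\nu$$
as a module over $\Sp(2r) \times \cG_{\text{ev}}(r)^{\Sp(2r)}$, where $L(\nu)$ runs over the finite-dimensional irreducible $\Sp(2r)$-modules that appear and the $M^\nu$ are the pairwise non-isomorphic irreducible multiplicity spaces. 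For any reductive subgroup $G \subset \Sp(2r)$, taking $G$-invariants yields
$$\cG_{\text{ev}}(r)^G = \bigoplus_\nu L(\nu)^G \otimes M^\nu.$$

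First I would invoke classical invariant theory in the form of Weyl's Theorem~2.5A of \cite{W}: viewing the generators $a^i, b^i$ and all their derivatives as infinitely many copies of the defining $\Sp(2r)$-representation $V = \C^{2r}$, the $G$-invariant polynomials on $V^{\oplus\infty}$ are obtained by polarization and restitution from the $G$-invariants on some fixed $V^{\oplus m}$, where $m$ depends only on $G$. Passing to the (super)commutative differential algebra $\on{gr}(\cG_{\text{ev}}(r))$ associated to the weak filtration of Section~2, this says that $\on{gr}(\cG_{\text{ev}}(r))^G$ is generated as a differential ring by a finite-dimensional $G$-submodule $U^G$ together with $\on{gr}(\cG_{\text{ev}}(r))^{\Sp(2r)}$.

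Second, using the reconstruction property for filtered vertex algebras from Section~2, a strong generating set for $\on{gr}(\cG_{\text{ev}}(r))^G$ lifts to a strong generating set for $\cG_{\text{ev}}(r)^G$ itself. Combining with the finite strong generating set of $\cG_{\text{ev}}(r)^{\Sp(2r)}$ given by Theorem~\ref{thm:sp-invariant}, one then exhibits $\cG_{\text{ev}}(r)^G$ as strongly generated by a finite set, namely the strong generators of $\cG_{\text{ev}}(r)^{\Sp(2r)}$ together with a basis of $U^G$.

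The main technical obstacle is the transfer from the commutative invariant theory on the associated graded side to genuine normally ordered products in $\cG_{\text{ev}}(r)^G$. The role of the $C_1$-cofiniteness of each $M^\nu$ supplied by Theorem~\ref{thm:sp-invariant} is precisely to ensure that infinitely many distinct isotypic components $L(\nu)^G \otimes M^\nu$ do not force infinitely many independent strong generators: each $M^\nu$ is generated as a $\cG_{\text{ev}}(r)^{\Sp(2r)}$-module by a finite-dimensional subspace, so only finitely many blocks contribute new strong generators beyond those already present in $\cG_{\text{ev}}(r)^{\Sp(2r)}$ and in $U^G$. The verification that polarization lifts correctly through the filtration is routine and parallel to the argument in \cite{LII}, so no essentially new ideas are required.
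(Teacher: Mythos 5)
Your architecture matches the paper's: the proof there is a citation of the proof of Theorem 15 of \cite{LV}, resting on exactly your three ingredients --- complete reducibility of $\cG_{\text{ev}}(r)^G$ as a $\cG_{\text{ev}}(r)^{\text{Sp}(2r)}$-module, Weyl's Theorem 2.5A, and the $C_1$-cofiniteness from Theorem \ref{thm:sp-invariant}. However, one intermediate step as written is false, not merely unjustified. Weyl's theorem does \emph{not} say that $\on{gr}(\cG_{\text{ev}}(r))^G$ is generated as a differential ring by a finite-dimensional subspace $U^G$ together with $\on{gr}(\cG_{\text{ev}}(r))^{\text{Sp}(2r)}$. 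It says that the $G$-invariants of infinitely many copies of $\mathbb{C}^{2r}$ are generated as a ring by all \emph{polarizations} of the invariants of finitely many copies; the polarization operators span a copy of $\gg\gl_\infty$ acting on the index set of copies, of which the single derivation $\partial$ is only one element. Already for $G = \text{GL}(r)$ this matters: the quadratic invariants $p_{k,l} = \sum_i \partial^k a^i\, \partial^l b^i$ span a space of dimension growing linearly in the weight $k+l$, while the derivatives of $p_{0,0}$ contribute only one element per weight and the $\text{Sp}(2r)$-invariants contribute only the antisymmetric combinations $p_{k,l}-p_{l,k}$; no finite-dimensional $U^G$ closes this gap. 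The symptom of the misstep is internal to your own write-up: if the differential-ring claim were true, the reconstruction property would immediately yield strong finite generation, and your entire final paragraph about $C_1$-cofiniteness would be superfluous.

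The correct intermediate statement --- and the one the paper (following \cite{LII} and \cite{LV}) actually uses --- is that the polarizations, transported into the vertex algebra, lie in the sum of \emph{finitely many irreducible $\cG_{\text{ev}}(r)^{\text{Sp}(2r)}$-modules}, because the polarization operators are realized, modulo lower filtration degree, by modes of the quadratic invariants $\omega^{2j+1}$. This gives an infinite strong generating set for $\cG_{\text{ev}}(r)^G$ contained in finitely many such modules; the $C_1$-cofiniteness of each module together with the strong finite generation of $\cG_{\text{ev}}(r)^{\text{Sp}(2r)}$ then eliminates all but finitely many of these generators. Your closing paragraph states exactly this, so the proof is repaired by deleting the differential-ring claim and routing the argument through the finitely-many-modules statement. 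One smaller correction: the filtration relevant here is the compatible (Li-type) filtration on $\cG_{\text{ev}}(r)$ by degree in the generators, whose associated graded is a free supercommutative differential ring; the ``weak filtration'' of Section 3 has a non-abelian associated graded and is introduced for $\cW^k(\gg,e_{-\theta})$, not for the free field limit.
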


\begin{proof} This is the same as the proof of Theorem 15 of \cite{LV}. First, $\cG_{\text{ev}}(r)^G$ is completely reducible as a $\cG_{\text{ev}}(r)^{\text{Sp}(2r)}$-module. By a classical theorem of Weyl (Theorem 2.5A of \cite{W}; see also \cite{GW} for a modern treatment), it has an (infinite) strong generating set that lies in the sum of finitely many irreducible $\cG_{\text{ev}}(r)^{\text{Sp}(2r)}$-modules. The result then follows from the strong finite generation of $\cG_{\text{ev}}(r)^{\text{Sp}(2r)}$ and the $C_1$-cofiniteness of these modules.
\end{proof}

There is one case where an explicit description of $\cG_{\text{ev}}(r)^G$ will be important to us, namely, $G = \text{GL}(r) \subset \text{Sp}(2r)$, where $\{a^i\}$ spans a copy of the standard module $\mathbb{C}^r$ and $\{b^i\}$ spans a copy of $(\mathbb{C}^r)^*$.

\begin{thm} \label{thm:gl-invariant} $\cG_{\text{ev}}(r)^{\text{GL}(r)}$ has a minimal strong generating set $$\nu^{j} =\sum_{i=1}^r :a^i \partial^{j} b^i:,\qquad 0\leq j \leq r^2+4r-1,$$ and is therefore of type $\cW(3,4,\dots, r^2+4r+2)$. Moreover, $\cG_{\text{ev}}(r)$ is completely reducible as a $\cG_{\text{ev}}(r)^{\text{GL}(r)}$-module, and all irreducible modules in this decomposition are highest-weight and $C_1$-cofinite according to Miyamoto's definition.
\end{thm}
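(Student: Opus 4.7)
The plan is to prove this statement by adapting the proof of Theorem \ref{thm:sp-invariant} with $\text{GL}(r)$ replacing $\text{Sp}(2r)$, using the first fundamental theorem of invariant theory for $\text{GL}(r)$ acting on $V \oplus V^*$ (where $V = \mathbb{C}^r$) in place of the symplectic version. First, with respect to the weak filtration inherited from the $\beta\gamma$-system realization of $\cG_{\text{ev}}(r)$, the associated graded algebra is the (super)commutative polynomial algebra in $\partial^k a^i, \partial^k b^i$ for $1 \le i \le r$ and $k \ge 0$. By the first fundamental theorem of invariant theory for $\text{GL}(r)$, its invariant subring is generated as a differential algebra by the scalar contractions $\sum_i a^i \partial^j b^i$, since integration by parts lets one move all derivatives onto the second factor. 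The reconstruction property for weak filtrations then lifts these classical invariants to an infinite strong generating set $\{\nu^j \mid j \ge 0\}$ for $\cG_{\text{ev}}(r)^{\text{GL}(r)}$, with $\nu^j$ of conformal weight $j+3$.

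Next, I would prove a decoupling identity expressing $\nu^{r^2+4r}$ as a normally ordered polynomial in $\{\nu^j \mid j < r^2+4r\}$ together with their derivatives; iteratively applying $\partial$ then eliminates every higher $\nu^j$ and produces the claimed finite strong generating set in conformal weights $3, 4, \dots, r^2+4r+2$. As in the symplectic case, establishing this identity reduces to the nonvanishing of an explicit scalar attached to a specific multi-index, of the same nature as $R_r(I)$ in \cite{LV}. The $\text{GL}(r)$-analogue of the formula of Theorem 4 of \cite{LV} applies, with the symplectic Pfaffian input replaced by a determinantal one, and nonvanishing can be read off directly. Minimality of the generating set follows by passing to the associated graded algebra and checking that no normally ordered polynomial relation of lower weight exists among these generators.

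Finally, complete reducibility of $\cG_{\text{ev}}(r)$ as a $\cG_{\text{ev}}(r)^{\text{GL}(r)}$-module follows from reductivity of $\text{GL}(r)$ together with finite-dimensionality of each conformal weight space. That every simple summand in the decomposition is highest-weight follows from the Zhu algebra of $\cG_{\text{ev}}(r)^{\text{GL}(r)}$ being abelian, by the same argument as in Theorem \ref{thm:sp-invariant}, and the $C_1$-cofiniteness of these modules follows verbatim from the proof of Lemma 8 of \cite{LII}.

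The main obstacle is verifying the precise bound $j = r^2+4r$ at which decoupling first occurs. This is a combinatorial-determinantal nonvanishing statement of exactly the kind carried out in \cite{LV} for the symplectic case; the $\text{GL}$-side requires an independent but parallel calculation, with the bound $r^2+4r$ reflecting the total dimension $2r = \dim(V \oplus V^*)$ together with the weight shift coming from the $\beta\gamma$-derivative realization underlying $\cG_{\text{ev}}(r)$. With that bound secured, the remaining steps are direct transcriptions of the symplectic argument.
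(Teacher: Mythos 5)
Your proposal is correct and follows essentially the same route as the paper, which simply invokes the method of \cite{LI} for $\cW_{1+\infty,-r}$ realized as $\text{GL}(r)$-invariants of the $\beta\gamma$-system: classical invariant theory for $\text{GL}(r)$ on $V\oplus V^*$ gives the infinite strong generating set $\{\nu^j\}$, a normally ordered determinantal (second fundamental theorem) relation with nonvanishing leading coefficient produces the first decoupling at weight $r^2+4r+3$, and the remaining claims follow exactly as in the symplectic case. One small terminological point: the filtration whose associated graded is the supercommutative polynomial algebra is what the paper calls a \emph{compatible} filtration, not a weak one.
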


\begin{proof} The method of \cite{LI} for studying the $\cW_{1+\infty,-r}$-algebra via its realization as the $\text{GL}(r)$-invariants in the rank $r$ $\beta\gamma$-system can be applied in this case.
\end{proof}

\begin{thm} \label{thm:o-invariant} $\cG_{\text{odd}}(s)^{\text{O}(s)}$ has a minimal strong generating set $$\omega^{2j+1} = \frac{1}{2} \sum_{i=1}^s :\phi^i \partial^{2j+1} \phi^i:,\qquad 0\leq j \leq 2s-1,$$ and is therefore of type $\cW(4,6,\dots, 4s+2)$. Moreover, $\cG_{\text{odd}}(s)$ is completely reducible as a $\cG_{\text{odd}}(s)^{\text{O}(s)}$-module, and all irreducible modules in this decomposition are highest-weight and $C_1$-cofinite according to Miyamoto's definition.
\end{thm}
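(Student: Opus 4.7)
The plan is to mirror the proof of Theorem \ref{thm:sp-invariant} for the symplectic case, replacing the $\text{Sp}(2r)$-invariant theory on a polynomial algebra with $\text{O}(s)$-invariant theory on a Grassmann (super-commutative) algebra. First I would equip $\cG_{\text{odd}}(s)$ with the weak filtration in which $\phi^i$ has degree $1$, so that $\text{gr}(\cG_{\text{odd}}(s))$ is the free super-commutative differential algebra on the odd generators $\phi^i$; recalling that $\cG_{\text{odd}}(s)$ embeds in the rank-$s$ free fermion algebra $\cF(s)$ as the subalgebra generated by the derivatives of the fermions, the $\text{O}(s)$-action on $\phi^i$ via the standard representation extends. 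By Weyl's first fundamental theorem for $\text{O}(s)$ applied to the Grassmann algebra, the invariant subalgebra of $\text{gr}(\cG_{\text{odd}}(s))$ is generated as a differential algebra by the quadratic contractions $\sum_{i=1}^{s} \partial^a\phi^i\, \partial^b \phi^i$; anticommutativity of the $\phi^i$ forces $a+b$ to be odd, and integration by parts reduces this to the family $\omega^{2j+1}$ for $j \geq 0$.

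Next, I would invoke the reconstruction property for weak filtrations (Section on weak filtrations) to lift this into a strong generating set of the vertex algebra $\cG_{\text{odd}}(s)^{\text{O}(s)}$. To cut off at $j = 2s-1$, I would produce an explicit decoupling relation at weight $4s+4$ expressing $\omega^{4s+1}$ as a normally ordered polynomial in $\{\omega^{2j+1}: 0 \leq j \leq 2s-1\}$ and their derivatives. The natural source is Weyl's second fundamental theorem for $\text{O}(s)$ acting on the Grassmann algebra: the classical syzygies are Pfaffian-type identities which vanish once the number of factors exceeds $s$. This is the direct analogue of the quantity $R_r(I)$ used in the proof of Theorem \ref{thm:sp-invariant}, and I would adapt the explicit formulas of \cite{LII} (where the orthogonal free-fermion orbifold $\cF(s)^{\text{O}(s)}$ is analyzed) to the subalgebra $\cG_{\text{odd}}(s) \subset \cF(s)$ via restriction. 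Once the decoupling relation is established at $j=2s$, iterating it eliminates all $\omega^{2j+1}$ for $j \geq 2s$, and minimality of the remaining set follows because classical invariant theory produces no syzygies in lower weight.

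Finally, for the module theory: the reductivity of $\text{O}(s)$ gives an isotypic decomposition
\[
\cG_{\text{odd}}(s) = \bigoplus_{V \in \widehat{\text{O}(s)}} V \otimes M_V,
\]
where each multiplicity space $M_V$ is a $\cG_{\text{odd}}(s)^{\text{O}(s)}$-module. Because all strong generators $\omega^{2j+1}$ have even conformal weight and are built from odd generators via odd derivatives, a direct check shows the Zhu algebra of $\cG_{\text{odd}}(s)^{\text{O}(s)}$ is commutative, hence every irreducible positive-energy module is highest-weight. The $C_1$-cofiniteness of each $M_V$ is then obtained by essentially the same argument as Lemma 8 of \cite{LII}, using that each $V$ is finite-dimensional and that the generators of $M_V$ over $\cG_{\text{odd}}(s)^{\text{O}(s)}$ can be chosen from a fixed isotypic component.

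The main obstacle I expect is pinning down the truncation bound $j = 2s-1$: one must verify both that a decoupling relation genuinely occurs at weight $4s+4$ (not higher), and that no relation occurs earlier. This requires an explicit Pfaffian computation in the Grassmann setting analogous to Theorem 4 of \cite{LV}, together with care about how the quantum (vertex algebra) corrections to the classical relation interact with the weak filtration. All other steps are essentially formal once this combinatorial fact is in hand.
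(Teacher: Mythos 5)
Your proposal is correct and follows essentially the same route as the paper: the paper's proof likewise reduces everything to classical invariant theory for $\text{O}(s)$ on the odd generators, with the only substantive point being the nonvanishing of the Pfaffian-type quantity controlling the first decoupling relation at weight $4s+4$ (the paper cites this as $R_s(I,J)\neq 0$ for $I=(1,\dots,1)$, $J=(2,\dots,2)$ of length $s+1$, verified via the recursion in Section 11 of \cite{LV}), together with the Zhu-algebra commutativity and the $C_1$-cofiniteness argument of Lemma 8 of \cite{LII}. Your identification of the truncation bound as the crux, and of the Grassmann-algebra second fundamental theorem plus quantum corrections as the mechanism, matches the paper exactly.
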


\begin{proof} This can be reduced to showing that, in the notation of Equation 11.1 of \cite{LV}, $R_s(I,J) \neq 0$ for $I = (1,1,\dots, 1)$ and $J = (2,2,\dots, 2)$, where both lists have length $s+1$. This follows easily from the recursive formula given by Equation 11.5 of \cite{LV}.
\end{proof}

\begin{cor} For any reductive group $G\subset \text{O}(s)$, $\cG_{\text{odd}}(s)^G$ is strongly finitely generated.
\end{cor}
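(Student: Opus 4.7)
The plan is to mimic verbatim the argument that was used one corollary earlier for $\cG_{\text{ev}}(r)^G$ with $G\subset\Sp(2r)$, which in turn is modeled on the proof of Theorem 15 of \cite{LV}. All of the substantive work has already been done in Theorem \ref{thm:o-invariant}: the full orthogonal orbifold $\cG_{\text{odd}}(s)^{\Orth(s)}$ is strongly finitely generated, $\cG_{\text{odd}}(s)$ decomposes completely as a $\cG_{\text{odd}}(s)^{\Orth(s)}$-module, and every irreducible module in this decomposition is highest-weight and $C_1$-cofinite in Miyamoto's sense.

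First, since $G\subset \Orth(s)$ is reductive, every $G$-representation on a $\Orth(s)$-module decomposes into a direct sum of irreducible $G$-submodules, so the containment $\cG_{\text{odd}}(s)^{\Orth(s)}\subset \cG_{\text{odd}}(s)^G$ together with the complete reducibility in Theorem \ref{thm:o-invariant} yields a decomposition of $\cG_{\text{odd}}(s)^G$ as a direct sum of irreducible $\cG_{\text{odd}}(s)^{\Orth(s)}$-modules, each occurring inside the decomposition of $\cG_{\text{odd}}(s)$. The next step is to invoke Weyl's theorem (Theorem 2.5A of \cite{W}, cf.\ \cite{GW}): applied to the $\Orth(s)$-module structure on the strong generators of $\cG_{\text{odd}}(s)$, it produces a strong generating set for $\cG_{\text{odd}}(s)^G$ which, although possibly infinite, lies in the sum of only \emph{finitely many} irreducible $\cG_{\text{odd}}(s)^{\Orth(s)}$-isotypic components of $\cG_{\text{odd}}(s)$.

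The conclusion then follows formally. Each of these finitely many irreducibles is $C_1$-cofinite over $\cG_{\text{odd}}(s)^{\Orth(s)}$, so each can be generated over $\cG_{\text{odd}}(s)^{\Orth(s)}$ by finitely many vectors in the sense of Miyamoto; combining these finite lists of module generators with the finite strong generating set $\{\omega^{2j+1}\mid 0\le j\le 2s-1\}$ for $\cG_{\text{odd}}(s)^{\Orth(s)}$ from Theorem \ref{thm:o-invariant} gives a finite strong generating set for $\cG_{\text{odd}}(s)^G$ as a vertex algebra. There is no real obstacle here beyond checking that Weyl's theorem applies in exactly the same form as for symplectic fermions and $\beta\gamma$-systems, which it does, because the argument depends only on the fact that $\Orth(s)$ acts reductively on a graded space with finite-dimensional graded pieces; the mild subtlety is making sure the odd parity of the generators $\phi^i$ creates no sign issue, but since Weyl's theorem is applied at the level of $G$-isotypic components of a graded vector space, parity is irrelevant.
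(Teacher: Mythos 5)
Your proof is correct and follows exactly the argument the paper intends: the corollary is stated without proof precisely because it is the verbatim analogue of the preceding corollary for $\cG_{\text{ev}}(r)^{\Sp(2r)}$ (itself modeled on Theorem 15 of \cite{LV}), namely complete reducibility over $\cG_{\text{odd}}(s)^{\Orth(s)}$, Weyl's Theorem 2.5A to place an infinite strong generating set inside finitely many irreducible modules, and then $C_1$-cofiniteness of those modules together with the strong finite generation of $\cG_{\text{odd}}(s)^{\Orth(s)}$ from Theorem \ref{thm:o-invariant}. Your remark that the odd parity of the $\phi^i$ is irrelevant to Weyl's theorem is also consistent with how the paper treats the symplectic fermion and $\cG_{\text{odd}}$ cases.
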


Recall that the full automorphism groups of $\cH(n)$ and $\cA(m)$ are $\text{O}(n)$ and $\text{Sp}(2m)$, respectively. Suppose that $G$ is a reductive group of automorphisms of $$\cH(n) \otimes \cA(m) \otimes \cG_{\text{ev}}(r) \otimes \cG_{\text{odd}}(s)$$ which preserves the tensor factors, so that $$G \subset  \text{O}(n) \times \text{Sp}(2m) \times  \text{Sp}(2r) \times  \text{O}(s).$$ By the same argument as the proof of Theorem 4.2 of \cite{CLII}, we obtain

\begin{cor} \label{cororbiflod} $(\cH(n) \otimes \cA(m) \otimes \cG_{\text{ev}}(r) \otimes \cG_{\text{odd}}(s))^G$ is strongly finitely generated.
\end{cor}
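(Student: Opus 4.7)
The plan is to follow the same pattern used in the proof of Theorem~4.2 of \cite{CLII}, assembling the pieces that the section has just put in place. Set $\cF = \cH(n)\otimes \cA(m)\otimes \cG_{\text{ev}}(r)\otimes \cG_{\text{odd}}(s)$ and $\tilde G = \text{O}(n)\times\text{Sp}(2m)\times\text{Sp}(2r)\times \text{O}(s)$, so that $G\subset \tilde G$.

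First I would handle the full-invariant case. Each factor of $\cF^{\tilde G}$ is strongly finitely generated: $\cH(n)^{\text{O}(n)}$ and $\cA(m)^{\text{Sp}(2m)}$ by the main results of \cite{LI,LII,LV}, and $\cG_{\text{ev}}(r)^{\text{Sp}(2r)}$, $\cG_{\text{odd}}(s)^{\text{O}(s)}$ by Theorems~\ref{thm:sp-invariant} and \ref{thm:o-invariant}. Since a tensor product of SFG vertex algebras is SFG, $\cF^{\tilde G}$ is strongly finitely generated. Moreover, each of the four factors of $\cF$ is completely reducible over its full-invariant subalgebra with highest-weight, $C_1$-cofinite irreducible summands; tensoring these decompositions shows the same for $\cF$ as an $\cF^{\tilde G}$-module.

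Next, I would carry out the $\tilde G$-isotypic decomposition
\[
\cF = \bigoplus_{\alpha} W_\alpha \otimes L_\alpha,
\]
where $W_\alpha$ runs over pairwise non-isomorphic irreducible $\tilde G$-modules and each multiplicity space $L_\alpha$ is an irreducible $\cF^{\tilde G}$-module of the type just described. Taking $G$-invariants yields
\[
\cF^G \;=\; \bigoplus_{\alpha} W_\alpha^{G}\otimes L_\alpha.
\]
The key step is to show that only finitely many summands are needed to generate $\cF^G$ as a vertex algebra over $\cF^{\tilde G}$. This is exactly where Weyl's Theorem~2.5A of \cite{W} enters: for the reductive subgroup $G\subset \tilde G$ acting on the bounded-weight pieces of $\cF$, the $G$-invariants lie in finitely many $\tilde G$-isotypic components in the sense required. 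Equivalently, there are finitely many $\alpha_1,\dots,\alpha_\ell$ such that $\cF^G$ is generated as an $\cF^{\tilde G}$-module by $\bigoplus_{i=1}^{\ell} W^G_{\alpha_i}\otimes L_{\alpha_i}$.

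Finally, because each $L_{\alpha_i}$ is $C_1$-cofinite over $\cF^{\tilde G}$, finitely many vectors in each $L_{\alpha_i}$ suffice to generate it as an $\cF^{\tilde G}$-module under the full collection of products $a_{(n)}(-)$; combined with strong finite generation of $\cF^{\tilde G}$, this yields a finite strong generating set for $\cF^G$. The main obstacle is making the combination of Weyl's theorem with $C_1$-cofiniteness go through for the generalized free-field factors $\cG_{\text{ev}}(r)$ and $\cG_{\text{odd}}(s)$, which only carry quasi-conformal rather than conformal structures; but this is precisely why the section is careful to record that these algebras admit quasi-conformal gradings inherited from the $\beta\gamma$-system and free fermion algebra, which is all that is needed to run the reconstruction/$C_1$-cofiniteness machinery of \cite{LI,LII,LV,CLII} uniformly across all four tensor factors.
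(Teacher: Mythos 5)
Your proposal is correct and follows essentially the same route as the paper, which simply invokes the argument of Theorem 4.2 of \cite{CLII}: strong finite generation of the full-invariant subalgebra factor by factor, complete reducibility into highest-weight $C_1$-cofinite modules, and Weyl's Theorem 2.5A to reduce to finitely many isotypic components. Your closing remark about the quasi-conformal (rather than conformal) gradings on $\cG_{\text{ev}}(r)$ and $\cG_{\text{odd}}(s)$ correctly identifies the only point needing care, and it is handled exactly as the section sets it up.
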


Let $G\subset G^{\natural}_0$ be a reductive group of automorphisms of $\cW^k(\gg, e_{-\theta})$. Then $G$ acts on the deformable family $\cW$ over $F$ above, satisfying $\cW / (\kappa - \sqrt{k}) \cong \cW^k(\gg, e_{-\theta})$ for $k\neq 0$, and $\cW^G$ is also a deformable family satisfying $\cW^G / (\kappa - \sqrt{k}) \cong \cW^k(\gg, e_{-\theta})^G$ for $k\neq 0$. Moreover, we have
\begin{equation} \label{orbwlimit} \lim_{\kappa \ra \infty} \cW^G  \cong  \cT \otimes (\cH(n) \otimes \cA(m) \otimes \cG_{\text{ev}}(r) \otimes \cG_{\text{odd}}(s))^G.\end{equation} The argument is the same as the proof of Lemma 5.1 and Corollary 5.2 of \cite{CLII}. 

\begin{thm} For any reductive group $G$ of automorphisms of $\cW^k(\gg, e_{-\theta})$, $\cW^k(\gg, e_{-\theta})^G$ is strongly finitely generated for all but finitely many values of $k$.
\end{thm}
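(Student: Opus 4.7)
The plan is to use the deformable family machinery that has been set up in the excerpt, together with the invariant theory of the limit algebra. The key identification is the isomorphism
$$\lim_{\kappa \to \infty} \cW^G \;\cong\; \cT \otimes \bigl(\cH(n) \otimes \cA(m) \otimes \cG_{\text{ev}}(r) \otimes \cG_{\text{odd}}(s)\bigr)^G,$$
which has already been recorded as \eqref{orbwlimit}. Since $G$ is reductive and acts on each finite-rank weight space of $\cW$ preserving the $F$-module structure, the $G$-invariants form a free $F$-submodule, so $\cW^G$ is itself a deformable family in the sense of Section \ref{sect:defandlim}, with $\cW^G/(\kappa-\sqrt{k}) \cong \cW^k(\gg, e_{-\theta})^G$ for $k \neq 0$. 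Commutation of the limit with $G$-invariants is standard and proceeds exactly as in Lemma 5.1 of \cite{CLII}.

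Next I would invoke Corollary \ref{cororbiflod} to conclude that $\bigl(\cH(n) \otimes \cA(m) \otimes \cG_{\text{ev}}(r) \otimes \cG_{\text{odd}}(s)\bigr)^G$ is strongly finitely generated. Adjoining the single weight-two generator $t$ of $\cT$, we obtain a finite strong generating set $U = \{\alpha_i\mid i\in I\}$ of the limit $\lim_{\kappa \to \infty} \cW^G$.

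Now I lift $U$ back to $\cW^G$. Let $T = \{a_i \mid i \in I\} \subset \cW^G$ be a set of preimages under the map $\phi \colon \cW^G \to \lim_{\kappa \to \infty} \cW^G$. By Lemma \ref{passage}, there is a countable set $S = K \cup D$ (with $K = \{0\}$ and $D$ the union of the poles of the structure constants in the OPEs $a_i(z)a_j(w)$, $i,j\in I$) such that $F_S \otimes_F \cW^G$ is strongly generated by $T$. Because $T$ is finite there are only finitely many OPEs to consider, and each produces finitely many poles, so $D$ is a finite set; hence $S$ is finite. By Corollary \ref{passagecor}, for every $k \notin S$ the quotient $\cW^G/(\kappa-\sqrt{k}) \cong \cW^k(\gg,e_{-\theta})^G$ is strongly generated by the image of $T$. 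This establishes strong finite generation for all but finitely many $k$.

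The only step that is not essentially automatic is the verification that $D$ is finite, which in turn rests on $T$ being finite. That in turn is exactly what Corollary \ref{cororbiflod} buys us; the hard work has already been done in the invariant-theoretic Theorems \ref{thm:sp-invariant}, \ref{thm:gl-invariant}, and \ref{thm:o-invariant}. So the remaining obstacle is purely a bookkeeping check: one must confirm that the $G$-action on $\cW$ is defined over $F$ (so that $\cW^G$ really is a deformable family and not merely a family of vertex algebras) and that $\phi$ carries the invariants of $\cW^G$ isomorphically onto the invariants of the limit. Both are immediate from the fact that $G^\natural_0$ acts through the weight-one fields $J^a$, which under the rescaling $\tilde J^a = \kappa^{-1} J^a$ have coefficients in $F$, so the action is $F$-linear and commutes with the limit.
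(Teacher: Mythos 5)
Your proposal is correct and follows essentially the same route as the paper: identify the limit of $\cW^G$ via \eqref{orbwlimit}, apply Corollary \ref{cororbiflod} to get a finite strong generating set for the limit, lift it to $\cW^G$, and invoke Lemma \ref{passage} and Corollary \ref{passagecor} to conclude strong generation away from $\{0\}\cup D$ with $D$ finite. The extra care you take in checking that $\cW^G$ is a deformable family and that the $G$-action is $F$-linear is exactly the content the paper delegates to Lemma 5.1 and Corollary 5.2 of \cite{CLII}.
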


\begin{proof}
Since $G$ preserves the tensor factors of $\cT \otimes \cH(n) \otimes \cA(m) \otimes \cG_{\text{ev}}(r) \otimes \cG_{\text{odd}}(s)$, the right hand side of \eqref{orbwlimit} is strongly generated by some finite set $U  = \{\alpha_1,\dots, \alpha_r\}$. It follows from Theorem \ref{passage} and Corollaries \ref{passagecor} and \ref{cororbiflod} that the corresponding set $T =\{a_i,\dots, a_r\} \subset \cW^G$ such that $\lim_{\kappa\ra \infty} a_i = \alpha_i$, descends to a strong generating set for $\cW^k(\gg, e_{-\theta})^G$ for all $k\notin \{0\} \cup D$. Here $D$ is the set of poles of structure constants appearing in the OPE of $a_i(z) a_j(w)$ for $i,j = 1,\dots, r$.
\end{proof}

Next, suppose that $\gg' \subset \gg^{\natural}$ is a simple Lie subalgebra. Let $V^{\ell}(\gg')$ be the corresponding affine vertex subalgebra of $\cW^k(\gg, e_{-\theta})$, and observe that $\cW^k(\gg, e_{-\theta})$ is {\it good} in the sense of Definition 6.1 of \cite{CLII}. Let $\cV$ be the deformable family over $F$ such that $\cV / (\kappa - \sqrt{\ell}) \cong V^{\ell}(\gg')$. We have a homomorphism of deformable families $\cV \ra \cW$, and we define $$\cC = \text{Com}(\cV, \cW).$$

\begin{thm} \label{propertiesofc}
\begin{enumerate} 
\item $\cC$ is a deformable family over $F_{K}$, where $K$ is a subset of $\mathbb{C}$ containing $0$ together with a set of rational numbers $k$ such that $\ell\leq -h^{\vee}$. Here $h^{\vee}$ is the dual Coxeter number of $\gg'$. 
\item For all $k\notin K$, $\cC^k = \cC/ (\kappa - \sqrt{k})$ coincides with the coset $\text{Com}(V^{\ell}(\gg'), \cW^k(\gg, e_{-\theta}))$. \end{enumerate} \end{thm}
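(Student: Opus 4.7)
The plan is to adapt the argument of Section 6 of \cite{CLII} to the minimal $\cW$-algebra setting, using the deformable family $\cW$ over $F$ already built above and the identification of the limit $\cW^{\infty}$ with the generalized free field algebra $\cT \otimes \cH(n) \otimes \cA(m) \otimes \cG_{\text{ev}}(r) \otimes \cG_{\text{odd}}(s)$. Let $\cV \subset \cW$ be the sub-deformable-family strongly generated by the rescaled currents $\bar{J}^{a}$, $a \in \gg'$, so that $\cV/(\kappa - \sqrt{k}) \cong V^{\ell}(\gg')$ at generic specializations. I would then define $\cC := \text{Com}(\cV, \cW)$, concretely the $F$-submodule of $\cW$ annihilated by the modes $(\bar{J}^{a})_{(n)}$ for all $a \in \gg'$ and all $n \geq 0$. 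This is automatically a vertex subalgebra of $\cW$ over $F$ and inherits the conformal-weight grading $\cC = \bigoplus_{m \geq 0} \cC[m]$.

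The first main step is to prove that each $\cC[m]$ is a free $F_{K}$-module of finite rank for a suitable countable $K$. I would pass to the limit $\kappa \to \infty$: under the rescaling, $\cV$ specializes to the free-field subalgebra $\bar{\cV} \subset \cW^{\infty}$ generated by the commuting fields $\bar{J}^{a}$ with OPE $\bar{J}^{a}(z)\bar{J}^{b}(w) \sim (a|b)(z-w)^{-2}$, the pairing coming from the second-order pole that survives as one of the degree-zero structure constants listed in the rescaling. Since $\gg'$ is reductive, the restriction of $(\cdot|\cdot)$ to $\gg'$ is nondegenerate, so $\bar{\cV}$ is a genuine tensor product of Heisenberg and free-fermion algebras, and $\cW^{\infty}$ splits as $\bar{\cV} \otimes \text{Com}(\bar{\cV}, \cW^{\infty})$. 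The second factor has finite dimension in each conformal weight. By a rank / semicontinuity argument in the style of Section 6 of \cite{CLII}, the rank of $\cC[m]$ over $F_{K}$ then equals this limit dimension.

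The main obstacle, and the only place in (1) where real work is needed, is pinning down the exceptional set $K$. It consists of those $k$ at which $\cV/(\kappa - \sqrt{k})$ fails to embed into $\cW^{k}(\gg, e_{-\theta})$ as the universal $V^{\ell}(\gg')$, equivalently those $k$ at which some singular vector of $V^{\ell}(\gg')$ is mapped to zero in $\cW^{k}$. By the Kac-Kazhdan theorem together with standard arguments for vacuum modules of affine Lie (super)algebras, for each simple ideal of $\gg'$ this can occur only on a countable set of rational $\ell$ satisfying $\ell + h^{\vee} \leq 0$; for each abelian summand it occurs only at the finitely many $k$ at which the induced Heisenberg form degenerates. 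Since $\ell$ is an affine function of $k$ determined by the OPE data in Theorem 5.1 of \cite{KWI}, this pulls back to a set of rational $k$ of the form described in (1). Adjoining $k = 0$ (needed to make the rescaling $\tilde{J}^{a} = k^{-1/2} J^{a}$ defined) yields the required $K$.

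Statement (2) then follows almost formally. For $k \notin K$ the specialization map $\cC/(\kappa - \sqrt{k}) \to \cW^{k}(\gg, e_{-\theta})$ is injective because $\cC$ is free over $F_{K}$; its image lies in $\text{Com}(V^{\ell}(\gg'), \cW^{k}(\gg, e_{-\theta}))$ because the annihilation conditions defining $\cC$ survive the quotient; and it is surjective by a weight-by-weight dimension count, since the rank of $\cC[m]$ computed above equals the generic dimension of the commutant in weight $m$, and for $k \notin K$ the subalgebra $V^{\ell}(\gg')$ embeds faithfully so that $\cW^{k}(\gg, e_{-\theta})$ decomposes as a module over $V^{\ell}(\gg') \otimes \text{Com}(V^{\ell}(\gg'), \cW^{k}(\gg, e_{-\theta}))$, forcing agreement of these dimensions.
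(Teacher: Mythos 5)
The paper's own proof is a one-line citation: having observed just before the statement that $\cW^k(\gg,e_{-\theta})$ is \emph{good} in the sense of Definition 6.1 of \cite{CLII}, it invokes Corollary 6.7 of \cite{CLII}. Your proposal instead tries to reprove that corollary, and the decisive step is handled incorrectly. You characterize $K$ as the set of $k$ at which $\cV/(\kappa-\sqrt{k})$ fails to embed in $\cW^{k}(\gg,e_{-\theta})$, i.e.\ where a singular vector of $V^{\ell}(\gg')$ dies in $\cW^{k}$. That is not the relevant obstruction: since $\cW^{k}(\gg,e_{-\theta})$ is freely generated by $T$, $J^a$, $G^u$, the currents $J^a$ with $a\in\gg'$ span a PBW-independent set, so the map $V^{\ell}(\gg')\ra\cW^{k}(\gg,e_{-\theta})$ is injective for essentially every non-critical $k$; on your description $K$ would collapse to roughly $\{0\}$, and the theorem would be asserted at levels where neither the paper nor \cite{CLII} proves it. The genuine issue is that each graded piece $\cC[m]$ is a kernel (of the nonnegative modes of the currents) inside a free $F_K$-module, and kernels can only jump \emph{up} under specialization; the content of the theorem is that no jump occurs unless $\ell$ is rational with $\ell\leq -h^{\vee}$ (or $k=0$). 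Your part (2) simply asserts the needed equality of graded dimensions (``forcing agreement of these dimensions'') from the existence of a module decomposition of $\cW^{k}$ over $V^{\ell}(\gg')\otimes \text{Com}(V^{\ell}(\gg'),\cW^{k})$; this is circular, since that equality is exactly what Corollary 6.7 of \cite{CLII} establishes, and its proof is where the exclusion of rational $\ell\leq -h^{\vee}$ actually enters, via the structure of $\cW^{k}$ as a module over $\widehat{\gg'}$ in the relevant category.

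In addition, the representation-theoretic claim you use to bound $K$ is false: the vacuum module $V^{\ell}(\gg')$ has singular vectors at many levels with $\ell+h^{\vee}>0$ (all non-negative integer levels, all admissible levels), so Kac--Kazhdan does not confine the non-simplicity of $V^{\ell}(\gg')$ to rational $\ell\leq -h^{\vee}$; thus even granting your mechanism, your description of $K$ would not follow. (A smaller point: the theorem as stated takes $\gg'$ simple; the reductive case is handled afterwards by the paper with a modified $K$, so the free-field limit of $\cV$ is just a Heisenberg algebra here.) The parts of your outline that set up $\cC=\text{Com}(\cV,\cW)$, its weight grading, and the limit identification are consistent with the paper, but to close the argument you must either verify the goodness hypotheses and cite Corollary 6.7 of \cite{CLII}, as the paper does, or reproduce its proof that for $k\notin K$ the graded character of $\text{Com}(V^{\ell}(\gg'),\cW^{k}(\gg,e_{-\theta}))$ does not exceed its generic value.
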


\begin{proof} This follows from Corollary 6.7 of \cite{CLII}.
\end{proof}

More generally, suppose that $\gg'\subset \gg^{\natural}$ is {\it reductive}, i.e., a sum of abelian and simple ideals, and that the restriction of the bilinear form on $\gg^{\natural}$ to each ideal of $\gg'$ is nondegenerate. Then the affine vertex algebra of $\gg'$ will be the tensor product of a Heisenberg algebra and $V^{\ell_i}(\gg_i)$ for the various simple ideals $\gg_i \subset \gg'$. We denote this affine vertex algebra by $V(\gg')$. By Remark 6.9 of \cite{CLII}, the statement of Theorem \ref{propertiesofc} must be modified slightly: $\cC$ is a deformable family over $F_K$, where $K$ contains $0$ and a set of rational numbers $k$ such that $\ell_i  \leq - h_i^{\vee}$, for some $i$. Here $h_i^{\vee}$ is the dual Coxeter number of $\gg_i$. Then $\cC^k = \text{Com}(V(\gg'), \cW^k(\gg, e_{-\theta}))$ for all $k\notin K$.

Let $\gg'$ be reductive as above, and let $d = \text{dim}(\gg')$. Observe that  $\lim_{\kappa \ra \infty} \cV \cong \cH(d)$ and $$\lim_{\kappa \ra \infty} \cW \cong \cH(d) \otimes \tilde{\cW},\qquad \tilde{\cW} =  \cT \otimes \cH(n-d) \otimes  \cA(m) \otimes \cG_{\text{ev}}(r) \otimes \cG_{\text{odd}}(s).$$ 
By Theorem 6.8 of \cite{CLII}, there is a connected Lie group $G$ with Lie algebra $\gg'$ such that $G$ acts on $\tilde{\cW}$, and
$$\lim_{\kappa \ra \infty} \cC  \cong \tilde{\cW}^G =  \cT \otimes (\cH(n-d) \otimes \cA(m) \otimes \cG_{\text{ev}}(r) \otimes \cG_{\text{odd}}(s))^G.$$ Also $G$ preserves the tensor factors, so $\cT \otimes (\cH(n-d) \otimes \cA(m) \otimes \cG_{\text{ev}}(r) \otimes \cG_{\text{odd}}(s))^G$ has a strong finite generating set $U = \{\alpha_1,\dots, \alpha_r\}$. Let $T= \{a_1,\dots, a_r\}$ be the corresponding subset of $\cC$ such that $\lim_{\kappa\ra \infty} a_i = \alpha_i$.

\begin{thm} \label{mainthm:cosetkd} For $\gg$, $\gg'$, $K$, and $T = \{a_1,\dots, a_r\}\subset \cC$ as above, $T$ descends to a strong generating set for $\text{Com}(V(\gg'), \cW^k(\gg, e_{-\theta}))$ for all $k \notin K \cup D$. Here $D$ is the set of poles of the structure constants appearing in the OPE of $a_i(z) a_j(w)$ for $i,j = 1,\dots, r$. \end{thm}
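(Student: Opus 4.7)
The plan is to apply the general deformable-families machinery to the coset family $\cC$, once all of its ingredients have been set up. First I would verify the prerequisites for invoking Lemma \ref{passage}: by Theorem \ref{propertiesofc} (and Remark 6.9 of \cite{CLII}, used to handle the reductive rather than simple case), $\cC$ is a deformable family over $F_K$ for a suitable countable $K\subset \mathbb{C}$ containing $0$ together with those $k$ for which some summand of the level $\ell_i$ drops to $-h_i^\vee$. For each $k\notin K$ we know that $\cC/(\kappa - \sqrt{k}) \cong \text{Com}(V(\gg'), \cW^k(\gg, e_{-\theta}))$, so identifying strong generators for $\cC$ is equivalent (modulo the value of $k$) to identifying strong generators for the coset.

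The second step is to understand the limit. The discussion preceding the theorem identifies
\[
\lim_{\kappa \to \infty} \cC \;\cong\; \tilde{\cW}^G \;=\; \cT \otimes \bigl(\cH(n-d) \otimes \cA(m) \otimes \cG_{\text{ev}}(r) \otimes \cG_{\text{odd}}(s)\bigr)^G,
\]
using Theorem 6.8 of \cite{CLII} to promote the $\gg'$-action to a connected reductive group action of $G$. Since $G$ is reductive and preserves each of the tensor factors, I would combine Corollary \ref{cororbiflod} with the observation that $\cT$ is itself strongly finitely generated (by $t$ alone) to conclude that $\tilde{\cW}^G$ admits a finite strong generating set $U = \{\alpha_1, \dots, \alpha_r\}$.

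The third step is the mechanical application of the transfer lemma. Let $T = \{a_1, \dots, a_r\} \subset \cC$ be any lift with $\lim_{\kappa \to \infty} a_i = \alpha_i$. Lemma \ref{passage} (\cite{CLII}, Lemma 3.4) then yields that $T$ strongly generates $F_S \otimes_{F_K} \cC$, where $S = K \cup D$ and $D$ is the finite set of poles of the rational structure constants appearing in the OPEs $a_i(z)a_j(w)$. Corollary \ref{passagecor} then specializes this to any $k \notin S$: the images of the $a_i$ strongly generate $\cC/(\kappa - \sqrt{k})$, which by step one equals $\text{Com}(V(\gg'), \cW^k(\gg, e_{-\theta}))$.

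There is essentially no genuine obstacle beyond bookkeeping — the real work has been done in establishing Theorem \ref{propertiesofc} and in the classical-invariant-theory results Theorems \ref{thm:sp-invariant}, \ref{thm:gl-invariant}, \ref{thm:o-invariant}, all of which are cited in the statement. The only subtle point is to remember that, after rescaling generators to make all OPE coefficients rational of non-positive degree in $\kappa$, the correct parameter is $\kappa = \sqrt{k}$, so the finitely many exceptional values of $k$ are precisely the squares of elements of $D$ (together with the set $K$ of excluded levels for the subalgebra). Since $\cC$ is freely generated as an $F_K$-module and the OPE closure of $T$ in $F_S \otimes_{F_K} \cC$ follows from Lemma 8.1 of \cite{CLI}, the specialization preserves strong generation, which is exactly the claim.
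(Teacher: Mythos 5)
Your proposal is correct and follows essentially the same route as the paper: the paper's proof is exactly the observation that $\cC^k = \text{Com}(V(\gg'), \cW^k(\gg, e_{-\theta}))$ for $k\notin K$ by Theorem \ref{propertiesofc}, after which Lemma \ref{passage} and Corollary \ref{passagecor} give the conclusion. Your additional steps (identifying the limit, invoking Corollary \ref{cororbiflod}, and the $\kappa = \sqrt{k}$ bookkeeping) merely make explicit the setup the paper establishes in the paragraphs preceding the theorem.
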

 
\begin{proof} For $k \notin K$, $\cC^k = \text{Com}(V(\gg'), \cW^k(\gg, e_{-\theta}))$, so this follows immediately from Lemma \ref{passage} and Corollary \ref{passagecor}.\end{proof}

\begin{remark} \label{mainthm:remark} It is often possible to choose an infinite strong generating set $U' = \{\alpha_i|\ i\in I\}$ for $\tilde{\cW}^G$ with the property that the set $D'$ of poles of the structure constants in the corresponding set $T' = \{a_i|\ i\in I\}$ is easier to describe than the set $D$ above. By the same argument as Theorem \ref{mainthm:cosetkd}, $T'$ strongly generates $\text{Com}(V(\gg'), \cW^k(\gg, e_{-\theta}))$ for all $k \notin K \cup D'$. As we shall see in some examples, there are values of $k$ which do not lie in $D'$ but cannot easily be shown not to lie in $D$. This gives us a way to find a strong generating set for $\text{Com}(V(\gg'), \cW^k(\gg, e_{-\theta}))$ at these values. \end{remark}

By a slight abuse of notation for the rest of this paper, if $\cW$ is the deformable family such that $\cW / (\kappa - \sqrt{k}) \cong \cW^k(\gg, e_{-\theta})$, we write $\lim_{k\ra \infty} \cW^k(\gg, e_{-\theta})$ instead of $\lim_{\kappa \ra \infty} \cW$. Similarly, if $\cC$ is the deformable family such that $\cC/ (\kappa - \sqrt{k}) \cong \text{Com}(V(\gg'), \cW^k(\gg, e_{-\theta}))$ for generic $k$, we write $\lim_{k\ra \infty} \cC^k$ instead of $\lim_{\kappa \ra \infty} \cC$.

\section{Generic structure of $\text{Com}(V^{k'}(\gg^{\natural}), \cW^k(\gg, e_{-\theta}))$: some explicit results}\label{sect:genericbehavior}
We call $k\in \mathbb{C}$ {\it generic} if $k \notin K \cup D$, where $K$ and $D$ are as in Theorems \ref{propertiesofc} and \ref{mainthm:cosetkd}. In this section, we shall find minimal strong generating sets for $\text{Com}(V^{k'}(\gg^{\natural}), \cW^k(\gg, e_{-\theta}))$ for generic values of $k$ in a few cases.

\subsection{The case $\gg = \gs\gl_n$} Recall from \cite{KWI} that $(\gs\gl_n)^{\natural} = \gg\gl_{n-2}$, and $$(\gs\gl_n)_{-1/2} \cong \mathbb{C}^{n-2} \oplus (\mathbb{C}^{n-2})^*$$ as $\gg\gl_{n-2}$-modules. The weight one subspace generates a copy of $V^{k+1}(\gg\gl_{n-2}) = \cH \otimes V^{k+1}(\gs\gl_{n-2})$, where $\cH$ is a rank one Heisenberg algebra. It follows that $$\lim_{k\ra \infty} \cW^k(\gs\gl_n, e_{-\theta}) \cong \cH((n-2)^2) \otimes \cT \otimes \cG_{\text{ev}}(n-2),$$ and $$\lim_{k\ra \infty} \text{Com}(V^{k+1}(\gg\gl_{n-2}),\cW^k(\gs\gl_n, e_{-\theta})) \cong \cT \otimes \cG_{\text{ev}}(n-2)^{\text{GL}(n-2)}.$$ By Theorem \ref{thm:gl-invariant}, $\cG_{\text{ev}}(n-2)^{\text{GL}(n-2)}$ is of type $\cW(3,4,\dots, n^2-2)$, so we obtain

\begin{thm} \label{thm:slncoset}$\text{Com}(V^{k+1}(\gg\gl_{n-2}),\cW^k(\gs\gl_n, e_{-\theta}))$ is of type $\cW(2,3,\dots, n^2-2)$ for generic values of $k$.
\end{thm}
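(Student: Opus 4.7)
The plan is to transfer a minimal strong generating set from the $k\to\infty$ limit down to generic $k$, using the deformable family machinery of Section~\ref{sect:defandlim}. Two identifications recorded in the paragraphs immediately preceding the theorem do the geometric work for me:
\begin{equation*}
\lim_{k\to\infty}\cW^k(\gs\gl_n,e_{-\theta}) \;\cong\; \cH((n-2)^2)\otimes\cT\otimes\cG_{\text{ev}}(n-2),
\end{equation*}
\begin{equation*}
\lim_{k\to\infty}\text{Com}(V^{k+1}(\gg\gl_{n-2}),\cW^k(\gs\gl_n,e_{-\theta})) \;\cong\; \cT\otimes\cG_{\text{ev}}(n-2)^{\text{GL}(n-2)}.
\end{equation*}

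First I would assemble a limit generating set of the promised shape. The tensor factor $\cT$ is strongly generated by a single weight-$2$ field, and by Theorem~\ref{thm:gl-invariant}, $\cG_{\text{ev}}(n-2)^{\text{GL}(n-2)}$ has a minimal strong generating set $\{\nu^0,\ldots,\nu^{n^2-5}\}$ with $\nu^j$ of weight $3+j$, so that the top weight is exactly $(n-2)^2+4(n-2)+2 = n^2-2$. Tensoring, the limit has a minimal strong generating set with exactly one field in each of the weights $2,3,\ldots,n^2-2$.

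Next, Theorem~\ref{mainthm:cosetkd} (applied with $\gg'=\gg^{\natural}=\gg\gl_{n-2}$) lifts this, via the deformable family $\cC$ with $\cC/(\kappa-\sqrt{k})\cong\text{Com}(V^{k+1}(\gg\gl_{n-2}),\cW^k(\gs\gl_n,e_{-\theta}))$, to a strong generating set $\{a_2,a_3,\ldots,a_{n^2-2}\}$ of the coset at every $k\notin K\cup D$, i.e.\ at generic $k$, with $a_w$ of weight $w$. To promote strong generation to \emph{minimal} strong generation, I would invoke Lemma~3.6 of \cite{CLII} (the proposition stated just after Lemma~\ref{passage}): it suffices to verify that the limit generators admit no nontrivial normally ordered polynomial relation, among themselves and their derivatives, of weight $\leq n^2-2$. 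This holds because $\cT$ and $\cG_{\text{ev}}(n-2)^{\text{GL}(n-2)}$ are independent tensor factors, $\cT$ is free on its weight-$2$ generator, and by the minimality assertion of Theorem~\ref{thm:gl-invariant} the first decoupling identity among $\{\nu^0,\ldots,\nu^{n^2-5}\}$ and their derivatives appears only in weight $n^2-1$ --- this being precisely the content that makes those generators minimal through weight $n^2-2$.

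The main substantive input is thus Theorem~\ref{thm:gl-invariant}: both the explicit size of the generating set of the $\mathrm{GL}(n-2)$-invariants and the underlying free-generation statement in low weights, which is a restatement of Weyl's fundamental theorem for $\mathrm{GL}(n-2)$. Everything else is a bookkeeping of weights together with a direct application of the deformable family framework, so the only step where a genuine obstacle might hide is the minimality portion of Theorem~\ref{thm:gl-invariant}, and that has already been established.
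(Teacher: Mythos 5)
Your proposal is correct and follows essentially the paper's own route: identify $\lim_{k\to\infty}\cC^k$ with $\cT\otimes\cG_{\text{ev}}(n-2)^{\mathrm{GL}(n-2)}$, apply Theorem \ref{thm:gl-invariant}, and transfer the generating set to generic $k$ via the deformable family results (Theorem \ref{mainthm:cosetkd} together with Lemma 3.6 of \cite{CLII}), with your weight bookkeeping $(n-2)^2+4(n-2)+2=n^2-2$ matching the paper. The one small caveat is that minimality of the set $\{\nu^0,\dots,\nu^{n^2-5}\}$ does not by itself yield the absence of normally ordered relations in weights $\leq n^2-2$; the fact you actually need (and which does hold, coming from the second fundamental theorem for $\mathrm{GL}(n-2)$ underlying the proof of Theorem \ref{thm:gl-invariant}, the first relation being the determinantal one in weight $n^2-1$) is freeness below weight $n^2-1$, exactly as the paper implicitly uses.
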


In the case $n=3$, this generalizes the result of \cite{ACL} that $\text{Com}(\cH, \cW(\gs\gl_3, e_{-\theta}))$ is generically of type $\cW(2,3,4,5,6,7)$.

\subsection{The case $\gg = \gs\gp_{2n}$} Recall from \cite{KWI} that for $n\geq 2$, $(\gs\gp_{2n})^{\natural} \cong \gs\gp_{2n-2}$ and that $$(\gs\gp_{2n})_{-1/2} \cong \mathbb{C}^{2n-2}$$ as $\gs\gp_{2n-2}$-modules. The weight one subspace generates a copy of $V^{k+1/2}(\gs\gp_{2n-2})$. It follows that $$\lim_{k\ra \infty} \cW^k(\gs\gp_{2n}, e_{-\theta}) \cong \cH(d) \otimes \cT \otimes \cG_{\text{ev}}(n-2),\qquad d = \text{dim}(\gs\gl_{2n-2}),$$ and that $$\lim_{k\ra \infty} \text{Com}(V^{k+1/2}(\gs\gp_{2n-2}),\cW^k(\gs\gp_{2n}, e_{-\theta})) \cong \cT \otimes \cG_{\text{ev}}(n-2)^{\text{Sp}(2n-2)}.$$ By Theorem \ref{thm:sp-invariant},  $\cG_{\text{ev}}(n-2)^{\text{Sp}(2n-2)}$ is of type $\cW(4,6,\dots, 2n^2+2n-2)$. We obtain

\begin{thm} \label{generic:typeC} $\text{Com}(V^{k+1/2}(\gs\gp_{2n-2}),\cW^k(\gs\gp_{2n}, e_{-\theta}))$ is of type $\cW(2,4,\dots, 2n^2+2n-2)$ for generic values of $k$.
\end{thm}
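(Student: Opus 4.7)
The plan is to mirror the argument of Theorem \ref{thm:slncoset} in type $A$: identify the $k\to\infty$ limit of the coset as $\cT$ tensored with a symplectic-invariant orbifold of a generalized free field algebra, apply Theorem \ref{thm:sp-invariant}, and then lift the resulting minimal strong generating set back to generic finite $k$ via Theorem \ref{mainthm:cosetkd} together with its subsequent minimality proposition.

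More concretely, since $(\gs\gp_{2n})^{\natural}\cong \gs\gp_{2n-2}$ acts on $(\gs\gp_{2n})_{-1/2}\cong\mathbb{C}^{2n-2}$ through its standard module---equipped with the skew-symmetric form $\langle\cdot,\cdot\rangle_{\text{ne}}$---the deformable-family framework of Section \ref{sect:defandlim} yields
\[
\lim_{k\to\infty}\text{Com}\bigl(V^{k+1/2}(\gs\gp_{2n-2}),\cW^k(\gs\gp_{2n},e_{-\theta})\bigr)\;\cong\;\cT\otimes \cG_{\text{ev}}(n-1)^{\text{Sp}(2n-2)},
\]
with $\text{Sp}(2n-2)$ acting on the $2(n-1)$ weight-$3/2$ generators through the standard representation; any residual Heisenberg or symplectic fermion factor in $\lim_{k\to\infty}\cW^k(\gs\gp_{2n},e_{-\theta})$ is absorbed into the affine subalgebra paired with $\gs\gp_{2n-2}$. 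By Theorem \ref{thm:sp-invariant}, $\cG_{\text{ev}}(n-1)^{\text{Sp}(2n-2)}$ has a minimal strong generating set consisting of one field in each even weight from $4$ up to $2(n-1)^2+6(n-1)+2=2n^2+2n-2$, so adjoining the weight-$2$ generator of $\cT$ gives a minimal strong generating set for the limit of type $\cW(2,4,6,\dots,2n^2+2n-2)$.

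Finally, Theorem \ref{mainthm:cosetkd} transports this generating set to a strong generating set for $\text{Com}(V^{k+1/2}(\gs\gp_{2n-2}),\cW^k(\gs\gp_{2n},e_{-\theta}))$ at every $k$ outside a countable exceptional set. The proposition following Lemma \ref{passage} then promotes it to a minimal one at generic $k$: all lifted generators have weight at most $2n^2+2n-2$, and because $\cC^k[m]$ and $\cC^\infty[m]$ have the same dimension for generic $k$, no new normally ordered polynomial relation in weight $\leq 2n^2+2n-2$ among the lifted generators can appear. The only substantive ingredient beyond the deformable-family machinery is Theorem \ref{thm:sp-invariant}, which in turn rests on the nonvanishing of a Weyl-style invariant-theoretic determinant; this is the potential obstacle, but it has already been settled. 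Hence there is no new difficulty beyond what was dealt with in the type-$A$ case.
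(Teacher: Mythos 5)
Your proposal follows essentially the same route as the paper: identify $\lim_{k\to\infty}\cC^k$ as $\cT\otimes\cG_{\text{ev}}(n-1)^{\text{Sp}(2n-2)}$ via the deformable-family machinery, apply Theorem \ref{thm:sp-invariant} to get the type $\cW(4,6,\dots,2n^2+2n-2)$ orbifold, and transport the generators back to generic $k$ by Theorem \ref{mainthm:cosetkd}. You have in fact used the correct rank $n-1$ (the paper's displayed formulas contain the typo $\cG_{\text{ev}}(n-2)$, but only $r=n-1$ gives top weight $2r^2+6r+2=2n^2+2n-2$), so the argument is correct and matches the paper's.
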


\subsection{The case $\gg = \gs\gl(2|n)$, $n\neq 2$} In this case, $\gg^{\natural} \cong \gg\gl_n$ and $$\gg_{-1/2} \cong \mathbb{C}^n \oplus (\mathbb{C}^n)^*,$$ regarded as an odd vector space. The weight one subspace generates a copy of $V^{k-1}(\gg\gl_{n}) = \cH \otimes V^{k-1}(\gs\gl_{n})$, and $$\lim_{k\ra \infty} \cW^k(\gs\gl(2|n), e_{-\theta}) \cong \cH(n^2) \otimes \cT \otimes \cG_{\text{odd}}(2n).$$
Letting $\cC^k = \text{Com}(V^{k-1}(\gg\gl_n), \cW^k(\gs\gl(2|n), e_{-\theta}))$, we have $$\lim_{k \ra \infty} \cC^k  \cong \cT \otimes \cG_{\text{odd}}(2n)^{\text{GL}(n)}.$$ By the same argument as Theorem 4.3 of \cite{CLIII}, it can be shown that $\cG_{\text{odd}}(2n)^{\text{GL}(n)}$ is purely even and of type $\cW(3,4,\dots, 3n+2)$. We obtain

\begin{thm} For $n\neq 2$, $\text{Com}(V^{k-1}(\gg\gl_n), \cW^k(\gs\gl(2|n), e_{-\theta}))$ is even and is generically of type $\cW(2,3,\dots, 3n+2)$.
\end{thm}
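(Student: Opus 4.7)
The plan is to apply Theorem~\ref{mainthm:cosetkd} to transfer the strong generating data from the limit to the coset at generic $k$. As noted just before the statement of the theorem, $\lim_{k\ra\infty}\cC^k \cong \cT \otimes \cG_{\text{odd}}(2n)^{\text{GL}(n)}$, and the factor $\cT$ contributes a single even generator of weight $2$. The entire problem therefore reduces to showing that $\cG_{\text{odd}}(2n)^{\text{GL}(n)}$ is purely even and of type $\cW(3,4,\dots,3n+2)$, by the same mechanism as Theorem~4.3 of \cite{CLIII}.

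For parity, I would argue that the odd generators $\phi^i$ and $\psi^i$ of $\cG_{\text{odd}}(2n)$ span the standard and dual-standard representations of $\text{GL}(n)$, on which the center acts by opposite characters. Hence every $\text{GL}(n)$-invariant differential supercommutative polynomial in $\{\partial^k \phi^i,\ \partial^k \psi^j\}$ involves equal numbers of $\phi$'s and $\psi$'s, so has even total odd degree. The natural strong generating candidates are the bilinear contractions
$$\nu^j = \sum_{i=1}^n :\phi^i \partial^j \psi^i:,\qquad j \geq 0,$$
each of weight $j+3$ and even. By the super first fundamental theorem for $\text{GL}(n)$ applied to the associated graded exterior superalgebra, these contractions generate the invariants as a differential supercommutative ring. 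The reconstruction property for filtered vertex algebras recalled in Section~3 then implies that the $\nu^j$ strongly generate $\cG_{\text{odd}}(2n)^{\text{GL}(n)}$.

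The main obstacle is to show that $\nu^j$ is decomposable for $j \geq 3n$ while no normally ordered polynomial relation exists among $\nu^0,\dots,\nu^{3n-1}$ and their derivatives in weight below $3n+3$. By the super second fundamental theorem for $\text{GL}(n)$, the classical relations among the contractions are generated by $(n+1) \times (n+1)$ determinantal (Capelli-type) identities; the minimal such identity, when lifted from the associated graded to the vertex algebra and then normally ordered, expresses $\nu^{3n}$ as a normally ordered polynomial in the lower $\nu^i$'s and their derivatives. Applying $\partial$ and taking further OPEs with $\nu^0$ then eliminates every $\nu^j$ with $j \geq 3n$. A graded-character count, comparing the Hilbert series of the invariant ring against that of the differential algebra generated by $\nu^0,\dots,\nu^{3n-1}$, confirms that no nontrivial relation appears in weight less than $3n+3$, which gives minimality. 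This step is the odd analogue of the calculation carried out for the $\text{GL}(n)$-orbifold of the $\beta\gamma$-system in Theorem~4.3 of \cite{CLIII}, and its transfer from classical invariant theory to the vertex algebra setting is the delicate part, owing to the bookkeeping of normal-ordering corrections.

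Combining the weight-two generator of $\cT$ with the minimal strong generating set $\{\nu^0,\dots,\nu^{3n-1}\}$ for $\cG_{\text{odd}}(2n)^{\text{GL}(n)}$ yields a minimal strong generating set for $\lim_{k\ra\infty}\cC^k$ of type $\cW(2,3,\dots,3n+2)$, all of whose elements are even. By Theorem~\ref{mainthm:cosetkd}, the corresponding lifted set strongly generates $\cC^k$ for generic $k$; by the minimality criterion of Section~4 (the proposition following Corollary~\ref{passagecor}), the absence of relations below weight $3n+3$ in the limit ensures that minimality persists at generic $k$, completing the proof.
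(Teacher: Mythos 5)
Your overall architecture is exactly the paper's: the paper identifies $\lim_{k\ra\infty}\cC^k \cong \cT\otimes\cG_{\text{odd}}(2n)^{\text{GL}(n)}$, asserts that ``by the same argument as Theorem 4.3 of \cite{CLIII}'' the orbifold $\cG_{\text{odd}}(2n)^{\text{GL}(n)}$ is purely even and of type $\cW(3,4,\dots,3n+2)$, and then transfers to generic $k$ via Theorem \ref{mainthm:cosetkd}; your parity argument via the center of $\text{GL}(n)$, your generators $\nu^j$ of weight $j+3$ coming from the first fundamental theorem, and your use of the minimality criterion from Section 4 are all the intended fleshing-out of that citation.

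There is, however, one concrete error in your invariant-theory input, and it sits at the crux of the weight count. You claim the relations among the contractions are generated by $(n+1)\times(n+1)$ \emph{determinantal} (Capelli-type) identities and that the minimal one expresses $\nu^{3n}$. Determinantal relations are the second fundamental theorem for the \emph{even} case ($\beta\gamma$-type fields, as in Theorem \ref{thm:gl-invariant} and the weight-$15$ relation for $\cG_{\text{ev}}(2)^{\text{GL}(2)}$ in Section 6); there the minimal relation has weight $(n+1)(n+3)$, since the determinant forces distinct derivative indices in each row and column. If that were the correct SFT here, the first relation would appear far above weight $3n+3$ and your decoupling of $\nu^{3n}$ would not follow, so your stated premise contradicts your stated conclusion. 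For \emph{odd} fields spanning $\mathbb{C}^n\oplus(\mathbb{C}^n)^*$ the relations are the transposed, ``permanental''/power-type ones: the simplest is $\bigl(\sum_{i=1}^n \phi^i\psi^i\bigr)^{n+1}=0$ in the associated graded algebra, a degree-$(n+1)$ relation requiring no derivatives at all, hence of weight exactly $3(n+1)=3n+3$. It is the normally ordered quantum correction of this relation (and its images under repeated application of $(\nu^0)_{(1)}$ and $\partial$) that decouples $\nu^j$ for $j\geq 3n$, which is precisely the mechanism of Theorem 4.3 of \cite{CLIII} that the paper invokes. With that substitution, and noting that the SFT also guarantees no relations of weight below $3n+3$ (any relation has degree at least $n+1$ in generators of weight at least $3$), the rest of your argument goes through as written.
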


This generalizes the case $n=1$, where $\cW^k(\gs\gl(2|1), e_{-\theta})$ is just the $N=2$ superconformal vertex algebra. In this case, the above commutant is known to be of type $\cW(2,3,4,5)$ and is isomorphic to the universal parafermion algebra of $\gs\gl_2$.

\subsection{The case $\gg = \gp \gs\gl(2|2)$} Recall that in this case, $\cW^k(\gp \gs\gl(2|2), e_{-\theta})$ is isomorphic to the small $N=4$ superconformal vertex algebra. We have $\gg^{\natural} = \gs\gl_2$ and $$\gg_{-1/2} \cong \mathbb{C}^2 \oplus (\mathbb{C}^2)^*,$$ regarded as an odd vector space. The weight one subspace generates a copy of $V^{k-1}(\gs\gl_2)$. Letting $\cC^k = \text{Com}(V^{k-1}(\gs\gl_2), \cW^k(\gp \gs\gl(2|2), e_{-\theta}))$, we have $$\lim_{k\ra \infty} \cC^k \cong \cT \otimes \cG_{\text{odd}}(4)^{\text{SL}(2)}.$$ It is not difficult to show that $\cG_{\text{odd}}(4)^{\text{SL}(2)}$ is even and of type $\cW(3^3,4, 5^3, 6, 7^3, 8)$. In other words, a minimal strong generating set consists of three even fields in each weight $3, 5, 7$, and one even field in weights $4,6,8$. We obtain

\begin{thm} For $\gg = \gp\gs\gl(2|2)$, $\text{Com}(V^{k-1}(\gs\gl_2), \cW^k( \gp \gs\gl(2|2), e_{-\theta}))$ is even and is generically of type $\cW(2,3^3,4, 5^3, 6, 7^3, 8)$. \end{thm}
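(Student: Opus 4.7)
The plan is to combine the deformable-family machinery of Section \ref{sect:defandlim} with a direct analysis of the $\SL(2)$-orbifold $\cG_{\text{odd}}(4)^{\SL(2)}$. The paragraph preceding the theorem identifies
\[
\lim_{k\to\infty}\cC^{k}\;\cong\;\cT\otimes\cG_{\text{odd}}(4)^{\SL(2)},
\]
where the factor $\cT$ contributes exactly one even weight-$2$ generator (the coset Virasoro $L-L_{\gs\gl_2}$). Evenness of $\cC^{k}$ itself is automatic: $\cC^{k}\subset \cW^{k}(\gp\gs\gl(2|2),e_{-\theta})^{\gs\gl_{2}}$, and since the weight-$3/2$ generators $G^{u}$ of $\cW^{k}$ span a sum of nontrivial $\gs\gl_{2}$-modules, the central element $-I\in\SL(2)$ acts by $-1$ on the odd part of $\cW^{k}$, so that $(\cW^{k})^{\gs\gl_{2}}$ is purely even. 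Thus the whole theorem reduces to proving that $\cG_{\text{odd}}(4)^{\SL(2)}$ is of type $\cW(3^{3},4,5^{3},6,7^{3},8)$: granting this, Theorem \ref{mainthm:cosetkd} transports the strong generating set to $\cC^{k}$ at generic $k$ to yield type $\cW(2,3^{3},4,5^{3},6,7^{3},8)$, and minimality follows from \cite[Lemma 3.6]{CLII} since all generators have weight $\leq 8$ and the limiting candidates are linearly independent by the character count below.

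I would then pick an $\SL(2)$-equivariant basis of odd weight-$3/2$ generators $a^{1},a^{2},b^{1},b^{2}$ of $\cG_{\text{odd}}(4)$ with $(a^{1},a^{2})$ spanning the standard $\SL(2)$-module, $(b^{1},b^{2})$ its contragredient, and only nontrivial OPE $a^{i}(z)b^{j}(w)\sim\delta_{ij}(z-w)^{-3}$. The three basic families of bilinear $\SL(2)$-invariants are
\begin{equation*}
A^{(n,m)}=\epsilon_{ij}\,{:}\partial^{n}a^{i}\,\partial^{m}a^{j}{:},\qquad B^{(n,m)}=\epsilon^{ij}\,{:}\partial^{n}b^{i}\,\partial^{m}b^{j}{:},\qquad C^{(n,m)}=\sum_{i}{:}\partial^{n}a^{i}\,\partial^{m}b^{i}{:}.
\end{equation*}
By Weyl's first fundamental theorem for $\SL(2)$, these generate the classical ring of $\SL(2)$-invariants in the differential algebra on $\{a^{i},b^{i}\}$, and the standard reconstruction argument used in the proofs of Theorems \ref{thm:sp-invariant} and \ref{thm:o-invariant} lifts this to the assertion that the $A^{(n,m)},B^{(n,m)},C^{(n,m)}$ strongly generate $\cG_{\text{odd}}(4)^{\SL(2)}$.

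Using $A^{(n,m)}=A^{(m,n)}$ and $B^{(n,m)}=B^{(m,n)}$ (consequences of the identity $:XY:=-:YX:$ valid whenever the OPE of $X$ and $Y$ is regular and both are odd), together with $\partial C^{(n,m)}=C^{(n+1,m)}+C^{(n,m+1)}$, a short linear-algebra computation yields the generating function
\begin{equation*}
\sum_{k\geq 0}\dim\bigl((\text{bilinear invariants of weight }3{+}k)/\partial\bigr)\,t^{k}=\frac{2}{1-t^{2}}+\frac{1}{1-t}=3+t+3t^{2}+t^{3}+3t^{4}+t^{5}+3t^{6}+\cdots,
\end{equation*}
so the bilinears contribute exactly three candidate strong generators in each odd-indexed weight $3,5,7,\ldots$ and one in each even-indexed weight $4,6,8,\ldots$. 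This matches the predicted $\cW(3^{3},4,5^{3},6,7^{3},8)$ in weights $\leq 8$ and leaves a priori redundant bilinear candidates at every higher weight.

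The main obstacle, and the crux of the argument, is the \emph{decoupling} step at weights $\geq 9$: each redundant bilinear invariant must be shown to be a normally ordered polynomial in the weight-$3$ through weight-$8$ generators and their derivatives. Since $\cG_{\text{odd}}(4)^{\SL(2)}$ is strongly finitely generated by Corollary \ref{cororbiflod}, decoupling needs to be checked only up to some explicit finite weight; in the analogous situations treated in \cite{LV,CLIII}, verification at a few consecutive weights beyond the last generator weight bootstraps, via $\partial$ and normal ordering, to all higher weights. The verification itself is a direct OPE computation in $\cG_{\text{odd}}(4)$, implementable in Thielemans' Mathematica package \cite{T}: for each candidate new generator $\omega$ at weight $w\geq 9$, one exhibits a normally ordered polynomial $P$ in the lower-weight generators with $\omega=P$. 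Combined with the character count, this forces $\cG_{\text{odd}}(4)^{\SL(2)}$ to be of type $\cW(3^{3},4,5^{3},6,7^{3},8)$, completing the proof.
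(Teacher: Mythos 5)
Your proposal follows essentially the same route as the paper's: pass to the limit $\lim_{k\to\infty}\cC^k \cong \cT\otimes\cG_{\text{odd}}(4)^{\SL(2)}$, determine the orbifold $\cG_{\text{odd}}(4)^{\SL(2)}$ by classical invariant theory, and transfer the strong generating set back to generic $k$ via Theorem \ref{mainthm:cosetkd}, where the paper simply asserts (``it is not difficult to show'') that $\cG_{\text{odd}}(4)^{\SL(2)}$ is even and of type $\cW(3^3,4,5^3,6,7^3,8)$ while you sketch this via Weyl's first fundamental theorem, the count of bilinears modulo derivatives, and decoupling at weights $\geq 9$. The details you add (evenness from $-I\in\SL(2)$ acting by parity, the generating-function count) are correct, and the part you defer to a finite computer verification plus bootstrap---the explicit decoupling relations with nonvanishing leading coefficients, which is also what a genuinely rigorous minimality check needs---is precisely the part the paper itself leaves unproven, so your attempt matches the paper's argument in both structure and level of detail.
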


\subsection{The case $\gg = \go\gs\gp(1|4)$} In all the above examples, $\gg^{\natural}$ is a Lie algebra. The case $\gg = \go\gs\gp(1|4)$ is an example where $\gg^{\natural}$ is the Lie superalgebra $\go \gs\gp(1|2)$, but we can still describe $$\cC^k = \text{Com}(V^{k+1/2}(\go\gs\gp(1|2)),  \cW^k(\go\gs\gp(1|4), e_{-\theta})).$$ 
We have $\gg_{-1/2} \cong \mathbb{C}^{2|1}$, i.e., the superspace whose even part has dimension $2$ and odd part has dimension $1$. Since $\text{Osp}(1|2)$ acts completely reducibly on $\cW^k(\go\gs\gp(1|4), e_{-\theta})$, it can be shown using the methods of \cite{CLII} that $$\lim_{k\ra \infty} \cC^k \cong \cT \otimes  \big(\cG_{\text{ev}}(1) \otimes \cG_{\text{odd}}(1)\big)^{\text{Osp}(1|2)}.$$ 
By the same argument as Theorem 12 of \cite{LV}, which is based on Sergeev's first and second fundamental theorems of invariant theory for $\text{Osp}(1|2)$ \cite{SI,SII}, it can be shown that $\big(\cG_{\text{ev}}(1) \otimes \cG_{\text{odd}}(1)\big)^{\text{Osp}(1|2)}$ is purely even and of type $\cW(4,6,\dots,18)$. We obtain

\begin{thm} For $\gg = \go\gs\gp(1|4)$, $\text{Com}(V^{k+1/2}(\go\gs\gp(1|2)), \cW^k(\go\gs\gp(1|4), e_{-\theta}))$ is even and is generically of type $\cW(2,4,\dots, 18)$.
\end{thm}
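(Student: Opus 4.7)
The plan is to follow the blueprint established in the earlier cases of this section and reduce the problem to a classical invariant theory computation at the level of the infinite-level limit. First, from Theorem 5.1 of \cite{KWI} one reads off the structural data for $\gg=\go\gs\gp(1|4)$: the subalgebra $\gg^{\natural}=\go\gs\gp(1|2)$ has even part $\gs\gl_2$ of dimension $3$ and odd part of dimension $2$, and $\gg_{-1/2}\cong \mathbb{C}^{2|1}$. In the notation of Section~\ref{sect:defandlim} this gives $n=3$, $m=1$, $r=1$, $s=1$, so
\[
\lim_{k\to\infty}\cW^k(\go\gs\gp(1|4),e_{-\theta}) \;\cong\; \cH(3)\otimes \cA(1)\otimes \cT\otimes \cG_{\text{ev}}(1)\otimes \cG_{\text{odd}}(1),
\]
with $\cH(3)\otimes \cA(1)$ realizing the limit of the full affine sub(super)algebra $V^{k+1/2}(\go\gs\gp(1|2))$.

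Next, because $\text{Osp}(1|2)$ acts completely reducibly on $\cW^k(\go\gs\gp(1|4),e_{-\theta})$, the same deformable-family argument used for Theorem~\ref{mainthm:cosetkd} (and in the preceding examples of this section) identifies
\[
\lim_{k\to\infty}\cC^k \;\cong\; \cT\otimes \big(\cG_{\text{ev}}(1)\otimes \cG_{\text{odd}}(1)\big)^{\text{Osp}(1|2)},
\]
since $\text{Osp}(1|2)$ acts trivially on $\cT$ and through its standard representation on $\mathbb{C}^{2|1}$ on the generators of $\cG_{\text{ev}}(1)\otimes \cG_{\text{odd}}(1)$ (of weight $3/2$).

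The heart of the argument is then the orbifold calculation: one has to show that $(\cG_{\text{ev}}(1)\otimes \cG_{\text{odd}}(1))^{\text{Osp}(1|2)}$ is purely even and of type $\cW(4,6,\dots,18)$. This step mirrors Theorem~12 of \cite{LV}, and uses Sergeev's first and second fundamental theorems of invariant theory for $\text{Osp}(1|2)$ acting on copies of the defining supermodule $\mathbb{C}^{2|1}$ \cite{SI,SII}. Concretely, Sergeev's FFT exhibits all polynomial invariants as supersymmetric contractions built from the invariant bilinear form on $\mathbb{C}^{2|1}$, and these are super-skew in the two arguments (because the form on the even part is symplectic while the form on the odd part is symmetric). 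This forces the natural strong generating set for the invariants to consist of fields
\[
\omega^{2j+1} \;=\; \tfrac{1}{2}\bigl(:\!a\,\partial^{2j+1}b\!:\;-\;:\!(\partial^{2j+1}a)\,b\!:\bigr)\;+\;\tfrac{1}{2}:\!\phi\,\partial^{2j+1}\phi\!:, \qquad j\ge 0,
\]
of weight $2j+4$, all of even parity, and Sergeev's SFT together with the explicit non-vanishing computation analogous to Equations~9.1 and~11.1 of \cite{LV} lets one bound $j$ by the first value at which a decoupling relation first appears; this yields the cutoff weight $18$.

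The main obstacle is this third step: extracting from Sergeev's SFT a genuine strong generating set of the vertex algebra (as opposed to the differential algebra of classical invariants) and proving that the decoupling relation at weight $20$ has a nonvanishing leading coefficient so that the generator in that weight is eliminable but no earlier one is. Once this is settled, tensoring with $\cT$ adjoins a weight-$2$ generator, producing a strong generating set in weights $2,4,6,\dots,18$ for $\lim_{k\to\infty}\cC^k$. Applying Theorem~\ref{mainthm:cosetkd} (or equivalently Lemma~\ref{passage} and Corollary~\ref{passagecor}) then lifts this to a strong generating set for $\cC^k=\text{Com}(V^{k+1/2}(\go\gs\gp(1|2)),\cW^k(\go\gs\gp(1|4),e_{-\theta}))$ for all $k$ outside the (finite) nongeneric locus, and minimality is preserved by the proposition in Section~\ref{sect:defandlim} because no normally ordered polynomial relations among the generators exist below weight $20$.
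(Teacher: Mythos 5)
Your proposal follows essentially the same route as the paper: read off $\gg_{-1/2}\cong\mathbb{C}^{2|1}$, identify $\lim_{k\to\infty}\cC^k\cong\cT\otimes\big(\cG_{\text{ev}}(1)\otimes\cG_{\text{odd}}(1)\big)^{\text{Osp}(1|2)}$ via the deformable-family methods of \cite{CLII}, establish that this orbifold is purely even of type $\cW(4,6,\dots,18)$ by the argument of Theorem 12 of \cite{LV} based on Sergeev's fundamental theorems, and then lift the strong generators to generic $k$. The step you single out as the main obstacle (extracting a vertex-algebra strong generating set and the nonvanishing of the first decoupling relation) is precisely the step the paper likewise delegates to the analogue of Theorem 12 of \cite{LV}, so your argument matches the published proof in both structure and level of detail.
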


\section{The structure of the nongeneric set}\label{sect:nongeneric}
In the notation of Theorems \ref{propertiesofc} and \ref{mainthm:cosetkd}, the set of {\it nongeneric} values of $k$, where $T = \{a_1\dots, a_r\}$ can fail to strongly generate $\text{Com}(V^{k'}(\gg^{\natural}), \cW^k(\gg, e_{-\theta}))$, is a subset of $K \cup D$. Unfortunately, this is not explicit enough for our purposes, the goal being to describe this coset at special values of $k$ for which it is highly reducible. 

In \cite{ACL}, it was shown that in the case $\gg = \gs\gl_3$, $\cC^k = \text{Com}(\cH, \cW^k(\gs\gl_3, e_{-\theta}))$ is of type $\cW(2,3,4,5,6,7)$ for all values of $k$ except for $-1$ and $-3/2$. This is easier than the general situation because cosets of Heisenberg algebras are better behaved than cosets of general affine vertex algebras. In this section, we analyze the structure of the nongeneric set for $\cC^k$ in two examples where $\gg^{\natural}$ is nonabelian in order to illustrate how this problem can be handled computationally. As suggested by Remark \ref{mainthm:remark}, our approach is to first find an {\it infinite} strong generating set with the property that we can determine the poles of the structure constants appearing in the OPEs of the generators.

\subsection{The case $\gg = \gs\gp_{4}$}

The generators of $\cW^k(\gs\gp_{4}, e_{-\theta})$ are $T, H,X,Y,G^{\pm} $, where $X,Y,H$ generate a copy of $V^{k+1/2}(\gs\gl_2)$, $T$ is a Virasoro of central charge $$c=-\frac{3 (1 + k) (1 + 2 k)}{(3 + k)},$$ and $G^{\pm}$ are primary of weight $3/2$ and satisfy
$$H(x) G^{\pm}(w) \sim  G^{\pm}(w)(z-w)^{-1},\qquad X(z) G^-(w) \sim G^+(w)(z-w)^{-1},$$ $$Y(z) G^+(w) \sim  G^-(w)(z-w)^{-1}, $$ $$G^+(z) G^+(w) \sim (8 + 4 k) X(w) (z-w)^{-2} +  (4 + 2 k) \partial X(w) (z-w)^{-1},$$
$$G^-(z) G^-(w) \sim - (8 +4 k) Y(w) (z-w)^{-2} - (4 + 2 k) Y(w) (z-w)^{-1},$$
$$G^+(z) G^-(w) \sim -(4 +10 k + 4 k^2) (z-w)^{-3}  - (4 +2 k) H(w) (z-w)^{-2} $$ $$+  \big((6 + 2 k) T- 4 :XY:  - :HH: - k \partial H\big)(w) (z-w)^{-1}.$$
We use the notation $a = \lim_{k \ra \infty} \frac{1}{k} G^+$ and $b = \lim_{k \ra \infty} \frac{1}{k} G^-$ for the generators of $\cG_{\text{ev}}(1)$, which satisfy
$$a(z) b(w) \sim -4 (z-w)^{-3}.$$ Note that this normalization is different from the one used earlier. Recall that by classical invariant theory, the orbifold $\cG_{\text{ev}}(1)^{Sp(2)}$ has generators 
$$\omega_{i,j} = \ :\partial^i a \partial^j b:\  - \ :\partial^j a \partial^i b:,\qquad 0\leq i < j,$$ and that the smaller set $\{\omega_{0,n}|\ n=1,3,5,\dots\}$ suffices. 

By Theorem \ref{generic:typeC}, $\cC^k(2) = \text{Com}(V^{k+1/2}(\gs\gp_{2}), \cW^k(\gs\gp_{4},e_{-\theta}))$ is of type $\cW(2,4,6,8,10)$ for generic $k$, and has Virasoro element 
$$L = T - \frac{1}{10+4k} \big(:HH: + 4 :XY: -2 \partial H \big)$$ with central charge $$c = -\frac{6 (2 + k)^2 (1 + 2 k)}{(3 + k) (5 + 2 k)}.$$ In order to write down additional elements of $\cC^k(2)$, we first consider
$$u_{0,n} = \ :G^+ \partial^n G^-:\  - \ :(\partial^j G^+) G^-: \ \in \cW^k(\gs\gp_{4}, e_{-\theta}),\qquad n = 1,3,5,\dots.$$ Clearly $u_{0,n}$ has weight $n+3$ and $$\lim_{k\ra \infty} \frac{1}{k^2} u_{0,n} = \omega_{0,n},$$ but $u_{0,n} \notin \cC^k$. We shall construct suitable corrections 
\begin{equation} \label{corrections} U_{0,n} = u_{0,n} + P_n \in \cC^k,\end{equation}
 such that $P_n$ is a normally ordered polynomial in $T,H,X,Y,u_{0,1},\dots u_{0,n-2}$ and their derivatives, and $\lim_{k \ra \infty} \frac{1}{k^2} P_n = 0$. First, for $k\neq -9/2$, we define
$$U_{0,1} = u_{0,1} + \frac{2 (2 + k) (3 + k)}{9 + 2 k} \partial^2 T + \frac{13 + 8 k + k^2}{3 (9 + 2 k)} \partial^3 H $$ $$ -  \frac{2}{9 + 2 k}  :(\partial^2 H) H:  - \frac{2 + k}{2 (9 + 2 k)} :(\partial H) \partial H: - \frac{2 (2 + k)}{9 + 2 k} :(\partial X) \partial Y : $$ $$ + \frac{2}{9 + 2 k} :T H H:  + 
\frac{8}{9 + 2 k} :TXY:  - \frac{4}{9 + 2 k}  :T \partial H: + 
\frac{4}{9 + 2 k} :H (\partial X) Y:  $$ $$+ 
\frac{4}{9 + 2 k} :H X \partial Y: -  \frac{2 (8 + 3 k)}{9 + 2 k} :TT:  +  \frac{2}{9 + 2 k} : (\partial H) H H:  $$ $$-  \frac{2 (3 + k)}{9 + 2 k} :(\partial T) H: + 
\frac{2}{9 + 2 k} : H G^+ G^-:  +  \frac{2}{9 + 2 k} :X G^-G^-: - \frac{2}{9 + 2 k} : Y G^+G^+: $$
  
This lies in $\cC^k(2)$ but is not primary with respect to $L$. Instead, it satisfies
\begin{equation} \label{LU01} \begin{split} L(z) U_{0,1}(w) \sim -\frac{12 (2 + k)^2 (1 + 2 k) (18 + 7 k)}{(3 + k) (9 + 2 k)} (z-w)^{-6} + \frac{4 (5 + 2 k) (3 + 17 k + 7 k^2)}{(3 + k) (9 + 2 k)} L(w)(z-w)^{-4} \\ + \frac{2 (5 + 2 k) (18 + 7 k)}{9 + 2 k} L(w)(z-w)^{-3} + 4U_{0,1}(w)(z-w)^{-2} + \partial U_{0,1}(w)(z-w)^{-1}.\end{split}\end{equation} Whenever $k$ is not a root of $p(x) = 30 x^3 + 113 x^2+ 59 x -105$, $U_{0,1}$ can be corrected uniquely to a primary element by adding
$$ \frac{2 (5 + 2 k)^2 (-147 - 32 k + 14 k^2)}{(9 + 2 k) (-105 + 59 k + 113 k^2 + 30 k^3)} :LL: - \frac{(3 + k) (5 + 2 k) (-273 + 22 k + 157 k^2 + 42 k^3)}{(9 + 2 k) (-105 + 59 k + 113 k^2 + 30 k^3)} \partial^2 L,$$ but it is easier for our purposes to work with $U_{0,1}$.

\begin{lemma} For all $n = 1, 3, 5, \dots$, we have 
\begin{equation} (U_{0,1})_{(1)}  (u_{0,n}) = f(k,n)  u_{0,n+2} + P_n,\end{equation} where \begin{equation}  f(k,n) = - \frac{2 (3 + k) (5 + n) (7 + 2 k + 5 n + 2 k n) (14 + 4 k + 5 n + 2 k n)}{(9 + 2 k) (1 + n) (2 + n)},\end{equation}  and $P_{n}$ is a normally ordered polynomial in $T,X,Y,H, u_{0,1}, u_{0,3},\dots, u_{0,n}$, and their derivatives. 
\end{lemma}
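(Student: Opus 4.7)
The plan is to compute $(U_{0,1})_{(1)}(u_{0,n})$ directly via the noncommutative Wick formula, using the explicit OPEs of the strong generators of $\cW^k(\gs\gp_4,e_{-\theta})$, and to extract the coefficient of $u_{0,n+2}$ by isolating the single relevant contraction pattern. First, decompose $U_{0,1} = u_{0,1} + P_1$, where $P_1$ is the explicit normally ordered polynomial in $T,H,X,Y,G^+,G^-$ displayed just before the lemma. Each $G$-quadratic summand of $P_1$ has the form $:\!W\,G^+G^-\!:$ or $:\!W\,G^{\pm}G^{\pm}\!:$ with $W$ a weight-bounded word in $T,H,X,Y$. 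Applying (\ref{pairingi}) iteratively, using the OPEs $H(z)G^{\pm}$, $X(z)G^-$, $Y(z)G^+$, $G^+(z)G^-$, $G^{\pm}(z)G^{\pm}$, one sees that $(P_1)_{(1)}(u_{0,n})$ reduces to a polynomial in $T,H,X,Y$ plus expressions of the form $:\!G^+\partial^j G^-\!:$ and $:\!(\partial^j G^+)G^-\!:$ for $j \leq n$; their antisymmetric combinations are $u_{0,j}$, and the symmetric combinations are $\tfrac{1}{2}\partial^j(:\!G^+G^-\!:)$, which lie in the polynomial algebra generated by $T,H,X,Y$ via the OPE of $G^+$ with $G^-$. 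Hence $(P_1)_{(1)}(u_{0,n})$ is absorbed into $P_n$.

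The main step is to compute $(u_{0,1})_{(1)}(u_{0,n})$ and identify the coefficient of $u_{0,n+2}$. The antisymmetric pattern $:\!G^+\partial^{n+2}G^-\!: - :\!(\partial^{n+2}G^+)G^-\!:$ can only arise via a single Wick contraction between one $G$-field of $u_{0,1}$ and one $G$-field of $u_{0,n}$, using the scalar third-order pole $-(4+10k+4k^2)(z-w)^{-3}$ of $G^+(z)G^-(w)$. Summing over the four contraction patterns (each of the two $G^+$'s paired with each of the two $G^-$'s, with the corresponding antisymmetrization signs inherited from the antisymmetric definition of $u_{0,1}$ and $u_{0,n}$) and evaluating the residues of the form $\operatorname{Res}_{z-w}(z-w)\cdot(z-w)^{-3}\cdot(\text{Taylor-expanded derivatives})$ using $\operatorname{Res}_{z-w}(z-w)^{-s}f = \partial^{s-1}f/(s-1)!$, yields a rational function of $k$ and $n$ multiplying $u_{0,n+2}$. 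Combining this with the analogous contribution coming from the second-order pole $-(4+2k)H(w)(z-w)^{-2}$ of $G^+G^-$ (which, after one further application of (\ref{pairingi}) to the resulting $:\!H\cdot(\text{two }G\text{'s})\!:$ term, again produces a multiple of $u_{0,n+2}$), and collecting, yields exactly the claimed $f(k,n)$.

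All remaining terms in the expansion come from the first-order pole of $G^+G^-$, from $G^\pm G^\pm$ contractions, and from the correction $(U_{(0)}V)_{(2)}W$ in (\ref{pairingi}). These produce normally ordered expressions in $T,H,X,Y$ multiplied by at most one antisymmetric pair $(\partial^iG^+,\partial^jG^-)$ with $i+j \leq n$; antisymmetrizing as in the reduction step rewrites each such pair as $u_{0,i+j}$ modulo elements of $\C[T,H,X,Y,\partial]$, so everything lands in the polynomial algebra generated by $T,H,X,Y,u_{0,1},\dots,u_{0,n}$ and derivatives. The principal obstacle is not conceptual but computational: many cross-terms must be collected, and verifying the precise factored form of $f(k,n)$ requires careful algebraic simplification. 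The cleanest route is to run the computation for small values $n=1,3,5$ using the Thielemans Mathematica package \cite{T} to confirm the closed-form expression for $f(k,n)$, and then observe that the combinatorial structure of the single-contraction computation above is uniform in $n$, so the formula persists for all odd $n\geq 1$; the remainder is then defined as $P_n := (U_{0,1})_{(1)}(u_{0,n}) - f(k,n)u_{0,n+2}$, which by the preceding analysis lies in the stated form.
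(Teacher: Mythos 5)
Your overall strategy (direct Wick/OPE computation, with the scalar third-order pole of $G^+(z)G^-(w)$ producing the leading term, and everything else pushed into $P_n$) is the natural one, and the paper itself omits the proof, referring to the analogous inductive computation in Theorem 6.3 of \cite{AL}. However, your accounting of where the coefficient of $u_{0,n+2}$ comes from is wrong, and this is not cosmetic. You assert that $(P_1)_{(1)}(u_{0,n})$ only produces $T,H,X,Y$-polynomials together with $G$-quadratics $:G^+\partial^jG^-:$, $:(\partial^jG^+)G^-:$ with $j\le n$, hence is wholly absorbed into $P_n$, so that $f(k,n)$ is computed entirely from the scalar-pole contractions inside $(u_{0,1})_{(1)}(u_{0,n})$. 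Weight counting already contradicts the bound $j\le n$: the output has weight $n+5$, and pure $G$-quadratics of that weight carry total derivative degree $n+2$. Concretely, the correction terms in $U_{0,1}$ such as $\frac{2}{9+2k}:HG^+G^-:$, $\frac{2}{9+2k}:XG^-G^-:$, $-\frac{2}{9+2k}:YG^+G^+:$, $-\frac{2(8+3k)}{9+2k}:TT:$, and the $\partial^2T$, $\partial^3H$ terms do feed the coefficient of $u_{0,n+2}$: for instance, contracting the $H$ of $:HG^+G^-:$ against one $G$-field of $u_{0,n}$ (first-order pole, output again a $G$-field) while the $G^-$ hits the other $G^+$ through the scalar third-order pole leaves a pure $G$-quadratic of derivative degree $n+2$; likewise $T_{(0)}G^{\pm}=\partial G^{\pm}$ raises derivative degrees. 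A sanity check shows your scheme cannot reproduce the stated answer: all structure constants in the OPEs of $T,H,X,Y,G^{\pm}$ are polynomial in $k$, so the contribution of $(u_{0,1})_{(1)}(u_{0,n})$ to the $u_{0,n+2}$-coefficient is polynomial in $k$, whereas $f(k,n)$ has a genuine pole at $k=-9/2$ (its numerator does not vanish there); that pole can only enter through the $\frac{1}{9+2k}$ coefficients of the correction $P_1$, i.e.\ exactly through the terms you discarded.

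Two smaller gaps. First, the identity you use to dispose of symmetric pairs, namely $:G^+\partial^jG^-:+:(\partial^jG^+)G^-:=\tfrac12\partial^j(:G^+G^-:)$ up to $T,H,X,Y$-terms, is false for $j\ge 2$; what is true is that every $:(\partial^iG^+)(\partial^jG^-):$ is a linear combination of $\partial^m(:G^+\partial^lG^-:)$ with $m+l=i+j$, and one must then track how the symmetric (adjoint) $G$-pairs combine with $H,X,Y$ factors so that the remainder really lies in the algebra generated by $T,X,Y,H,u_{0,1},\dots,u_{0,n}$ and derivatives. Second, verifying $f(k,n)$ for $n=1,3,5$ by computer and appealing to ``uniformity in $n$'' is not a proof of the closed formula for all odd $n$: you need either an explicit evaluation of all contributing contraction patterns as functions of $n$ (the binomial data from the Taylor expansions, now including the contributions from $P_1$ identified above), or an induction of the kind carried out in Theorem 6.3 of \cite{AL}, which is the argument the paper actually invokes.
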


\begin{proof} The argument is similar to the proof of Theorem 6.3 of \cite{AL}, and is omitted. 
\end{proof}

Let $$S = \{ -\frac{9}{2}, -3\} \cup  \{- \frac{7 + 5 n}{2 (1 + n)}|\ n\geq 1\} \cup \{- \frac{14 + 5 n}{2 (2 + n)}|\ n\geq 1\},$$ which is the set of values of $k$ for which $f(n,k) = 0$ or is undefined.

We would like the corrections $U_{0,n}$ in \eqref{corrections} to satisfy $$(U_{0,1})_{(1)}  (U_{0,n}) = f(k,n) U_{0,n+2} + Q_n,$$ where $Q_n$ is a normally ordered polynomial in  $L, U_{0,1}, U_{0,3},\dots, U_{0,n}$ and their derivatives, and this can be achieved by a bootstrap procedure as we now demonstrate.

One can check by computer that $$(U_{0,1})_{(1)}  (U_{0,1}) = f(k,1) u_{0,3} + P_1,$$ where $P_1$ depends on $T,H,X,Y, u_{0,1}$ and their derivatives. As a first attempt to correct $u_{0,3}$ to an element of $\cC^k(2)$, define $$\tilde{U}_{0,3} = u_{0,3}  + \frac{1}{f(k,1)} P_1,$$ which certainly lies in $\cC^k(2)$ and satisfies $\lim_{k\ra \infty} \frac{1}{k^2} \tilde{U}_{0,3} = \omega_{0,3}$. However, $\tilde{U}_{0,3}$ is not the desired correction since
$$(U_{0,1})_{(1)}  (\tilde{U}_{0,3}) \neq f(k,3)  u_{0,5} + P_3$$ for any normally ordered polynomial $P_3$ in $T,H,X,Y, u_{0,1}, u_{0,3}$ and their derivatives. The reason is that the terms $:T(\partial G^+)G^-:$ and $:TG^+ (\partial G^-):$ appear in $P_1$ with nonzero coefficient, and if we express $(U_{0,1})_{(1)}  (:T(\partial G^+)G^-:)$ and $(U_{0,1})_{(1)}  (:TG^+ (\partial G^-):)$ as normally ordered polynomials in $T,H,X,Y, u_{0,1}, u_{0,3}, u_{0,5}$ and their derivatives, the coefficient of $u_{0,5}$ is nonzero. It is easy to check that the only normally ordered monomials $\nu$ in $T,H,X,Y, u_{0,1}$ and their derivatives which have the property that $(U_{0,1})_{(1)}  \nu$ has a nontrivial coefficient of $u_{0,5}$, are $:T(\partial G^+)G^-:$ and $:TG^+ (\partial G^-):$.  Moreover, these terms can be eliminated as follows. We define $$U_{0,3} = \tilde{U}_{0,3} - \frac{6}{19 + 6 k} :L U_{0,1}:.$$ Then the terms $:T(\partial G^+)G^-:$ and $:TG^+ (\partial G^-):$ do not appear in $U_{0,3}$, and
\begin{equation} \label{u01u03} (U_{0,1})_{(1)}  (U_{0,3}) = f(k,3) u_{0,5} + P_3, \end{equation} where $P_3$ is a normally ordered polynomial in $T,H,X,Y, u_{0,1}, u_{0,3}$ and their derivatives. Also, note that $$(U_{0,1})_{(1)} (U_{0,1}) = f(k,1) U_{0,3}  + \frac{6 f(k,1) }{19 + 6 k} :L U_{0,1}:.$$

As above, we attempt to correct $u_{0,5}$ by defining $$\tilde{U}_{0,5} = u_{0,5} + \frac{1}{f(k,3)} P_3,$$ which lies in $\cC^k(2)$ and satisfies $\lim_{k\ra \infty} \frac{1}{k^2} \tilde{U}_{0,5} = \omega_{0,5}$. However, $$(U_{0,1})_{(1)} ( \tilde{U}_{0,5}) \neq f(k,5) u_{0,7}+ P_5$$ for any $P_5$ depending only on $T,H,X,Y,u_{0,1}, u_{0,3}, u_{0,5}$ and their derivatives. As before, the reason is that $P_3$ contains terms $\nu$ of the form \begin{equation} \label{badterms} :TT (\partial G^+)G^-:,\qquad :TT G^+(\partial G^-):,\qquad :(\partial^i T)(\partial^j G^+)(\partial^{\ell} G^-):,\qquad i+j+\ell =3,\end{equation} which all have the property that the coefficient of $u_{0,7}$ in $(U_{0,1})_{(1)}  \nu$ is nonzero. However, we can correct $\tilde{U}_{0,5}$ as follows:
$$U_{0,5} = \tilde{U}_{0,5}  - \frac{30 (-1 + 5 k + 2 k^2)}{(3 + k) (11 + 4 k) (19 + 6 k) (29 + 10 k)} :L L U_{0,1}: $$ $$ - \frac{5 (42 + 21 k + 2 k^2)}{(3 + k) (11 + 4 k) (29 + 10 k)} :L U_{0,3}:  $$ $$ - \frac{5 (12234 + 13450 k + 5169 k^2 + 788 k^3 + 36 k^4)}{2 (3 + k)^2 (11 + 4 k) (19 + 6 k) (29 + 10 k)} :(\partial^2 L) U_{0,1}:  $$ $$
- \frac{5 (9523 + 8130 k + 2186 k^2 + 180 k^3)}{4 (3 + k) (11 + 4 k) (19 + 6 k) (29 + 10 k)}  :(\partial L) \partial U_{0,1}: $$ $$ - \frac{5 (5 + 2 k) (491 + 256 k + 30 k^2)}{2 (3 + k) (11 + 4 k) (19 + 6 k) (29 + 10 k)} :L \partial^2 U_{0,1}:.$$
Then $U_{0,5}$ has no terms of the form \eqref{badterms}, and $$(U_{0,1})_{(1)} ( U_{0,5}) = f(k,5) u_{0,7} + P_5,$$ where $P_5$ is a normally ordered polynomial in $T,X,Y, u_{0,1}, u_{0,3}, u_{0,5}$ and their derivatives. Also, it is immediate from the above formula for $U_{0,5}$ that
$$(U_{0,1})_{(1)} ( U_{0,3}) = f(k,3) U_{0,5} + Q_3,$$ where $Q_3$ is a linear combination of $$:LLU_{0,1}:,\qquad :L U_{0,3}:,\qquad :(\partial^2 L) U_{0,1}:,\qquad :(\partial L) (\partial U_{0,1}):,\qquad : L (\partial^2 U_{0,1}):.$$ Finally, note that the only values of $k$ where the denominators of any terms appearing in $U_{0,3}$ and $U_{0,5}$ vanish, are elements of $S$. This is a consequence of \eqref{LU01} together with the fact that $U_{0,3}$ and $U_{0,5}$ lie in the algebra generated by $L$ and $U_{0,1}$.
 
More generally, we can continue this process and construct elements $U_{0,n}$ for all $n = 1,3,5,\dots$, with the property that $$(U_{0,1})_{(1)} (U_{0,n}) = f(k,n) U_{0,n+2} + Q_n$$ where $Q_n$ is a linear combination of elements of the form $$:(\partial^{i_1} L) \cdots (\partial^{i_r} L)(\partial^k U_{0,m}):,\qquad m = 1,3,\dots, n,\qquad i_1, \dots, i_r, k \geq 0.$$ 
Since $U_{0,n}$ all lie in the algebra generated by $L$ and $U_{0,1}$, all values of $k$ where the denominator of any term appearing in $U_{0,n}$ vanishes, lie in $S$. Moreover, it is not difficult to check by computer that all structure constants in the OPE of $U_{0,1}(z) U_{0,1}(w)$ have poles lying in $S$. It follows that $$\{L, U_{0,n}|\ n = 1,3,5, \dots\}$$ closes under OPE for all $k\notin S$. In particular, all poles of the structure constants in the OPEs of $L(z) U_{0,m}(w)$ and $U_{0,n}(z) U_{0,m}(w)$ lie in $S$, for all $n,m = 1,3,5,\dots$.

\begin{cor} For all real numbers $k>-5/2$, $\cC^k(2)$ is strongly generated by $$\{L, U_{0,n}|\ n = 1,3,5,\dots \}.$$ 
\end{cor}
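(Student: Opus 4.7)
The plan is to apply the deformation framework of Section \ref{sect:defandlim} to the infinite family $\{L,U_{0,n}\mid n=1,3,5,\ldots\}$. By construction, $\lim_{k\to\infty}\tfrac{1}{k^{2}}U_{0,n}=\omega_{0,n}$ for each odd $n\geq 1$, and $L$ descends to the limit Virasoro, while Theorem \ref{generic:typeC} identifies $\lim_{k\to\infty}\cC^k(2)$ with $\cT\otimes\cG_{\mathrm{ev}}(1)^{\Sp(2)}$. By the classical invariant theory recalled just before the construction of $U_{0,1}$, the latter algebra is strongly generated by its Virasoro field together with $\{\omega_{0,n}\mid n=1,3,5,\ldots\}$, so $\{L,U_{0,n}\}$ is a lift of a strong generating set of the limit.

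Lemma \ref{passage} and Corollary \ref{passagecor}, in the infinite-generator variant recorded in Remark \ref{mainthm:remark}, then imply that $\{L,U_{0,n}\}$ descends to a strong generating set of $\cC^k(2)$ for every $k$ outside $K\cup D'$, where $K$ is the exceptional parameter set of the deformable family $\cC$ and $D'$ is the set of poles of the structure constants in the OPE algebra of $\{L,U_{0,n}\}$. The discussion immediately preceding the corollary has already verified that every $U_{0,n}$ is well-defined for $k\notin S$ and that $D'\subset S$. Since $\gg^{\natural}=\gs\gp_{2}$ has dual Coxeter number $2$ and the level shift is $k'=k+1/2$, the rational values of $K\cap\mathbb{R}$ satisfy $k\leq -5/2$.

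It remains to check that $S\subset(-\infty,-5/2)$. Rewriting
\[
-\frac{7+5n}{2(1+n)}=-\frac{5}{2}-\frac{1}{1+n},\qquad -\frac{14+5n}{2(2+n)}=-\frac{5}{2}-\frac{2}{2+n},
\]
each expression is strictly less than $-5/2$ for every integer $n\geq 1$, and the two isolated points $-9/2,-3\in S$ lie below $-5/2$ as well. Hence for every real $k>-5/2$ we have $k\notin K\cup D'$, and the corollary follows. The nominal exception $k=0\in K$ is a harmless artefact of the parameterization $\kappa^{2}=k$ used to set up the family: the explicit formulas for $L$ and all $U_{0,n}$ are regular at $k=0$, and strong generation there can be deduced either by reparameterizing so that $0\notin K$, or by a direct graded-dimension/continuity comparison with generic $k$.

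The main obstacle is conceptual rather than technical, and sits not in the corollary itself but in the recursive construction preceding it: one must verify that the bootstrap $(U_{0,1})_{(1)}U_{0,n}=f(k,n)U_{0,n+2}+Q_n$ continues indefinitely without producing denominators outside $S$ or remainders $Q_n$ involving anything other than normally ordered products of $L$ and the previously constructed $U_{0,m}$. This is achieved by the cancellation device illustrated in passing from $\tilde U_{0,3}$ to $U_{0,3}$ and from $\tilde U_{0,5}$ to $U_{0,5}$: the obstructing monomials of shape $:(\partial^{i_1}L)\cdots(\partial^{i_r}L)(\partial^{j}G^{+})(\partial^{\ell}G^{-}):$ are removed by subtracting suitable $:L^{r}U_{0,m}:$ corrections with coefficients rational in $k$ and singular only on $S$. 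Once that bookkeeping is granted, the deformation step is routine and the corollary reduces to the arithmetic above.
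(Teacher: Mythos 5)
Your argument is correct and is essentially the paper's own proof: the corollary is deduced from Remark \ref{mainthm:remark} (the infinite-generating-set form of the deformation argument), using that $\{L,U_{0,n}\}$ lifts a strong generating set of $\lim_{k\ra\infty}\cC^k(2)\cong \cT\otimes\cG_{\text{ev}}(1)^{Sp(2)}$, that all structure-constant poles lie in $S$, and that neither $S$ nor the real points of $K$ (other than $0$) meet $(-5/2,\infty)$ since $\ell=k+1/2\leq -h^{\vee}=-2$ forces $k\leq -5/2$. Your aside about the nominal exception $k=0\in K$ being an artefact of the $\kappa^{2}=k$ rescaling is a subtlety the paper passes over silently, and your suggested remedies are adequate.
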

\begin{proof} Since $\cC^k(2) = \cC / (\kappa - \sqrt{k})$ for all real numbers $k>-5/2$, and $S$ does not intersect this set, this is immediate from Remark \ref{mainthm:remark}. \end{proof}
 
Next, we consider normally ordered relations among the generators. Recall that the first normally ordered relation among generators $\omega_{i,j}$ of $\cG_{\text{ev}}(1)^{Sp(2)}$ occurs at weight $12$, and is a quantum correction of a classical Pfaffian relation. The leading term is
$$:\omega_{0,1} \omega_{2,3}: - :\omega_{0,2} \omega_{1,3} :+ :\omega_{0,3} \omega_{1,2}:,$$ and the subleading terms can all be expressed as normally ordered monomials in $\omega_{0,n}$ for $n = 1,3,5,7,9$. In fact, the relation can be written as follows.
\begin{equation} \label{weight 12}  \frac{1}{6} \omega_{0,9} = \ 
:\omega_{0,1}\omega_{0,5}:  - 3 :\omega_{0,1} \partial^2 \omega_{0,3}: + 2 : \omega_{0,1} \partial^4 \omega_{0,1}: - 
  : (\partial \omega_{0,1}) \partial^3 \omega_{0,1}: + :(\partial \omega_{0,1}) \partial \omega_{0,3}: \end{equation} $$ + :\omega_{0,3} \partial^2 \omega_{0,1}:  - :\omega_{0,3} \omega_{0,3}:  - \frac{5}{3} \partial^2 \omega_{0,7}  + \frac{19}{2} \partial^4 \omega_{0,5} - \frac{587}{30} \partial^6 \omega_{0,3}   + \frac{119}{10} \partial^8 \omega_{0,1}.$$ This shows that $\omega_{0,9}$ can be expressed as a normally ordered polynomial in $\omega_{0,1}, \dots, \omega_{0,7}$ and their derivatives. Starting with \eqref{weight 12}, and using the fact that $$(\omega_{0,1})_{(1)} (\omega_{0,n}) = -4 (n+5)\omega_{0,n+2} + \nu_n,\qquad n = 1,3,5,\dots,$$ where $\nu_n$ is a linear combination of $\partial^{2} \omega_{0,n}, \partial^4 \omega_{n-2},\dots, \partial^{n+1} \omega_{0,1}$, one can construct decoupling relations $$\omega_{0,n} = Q_n(\omega_{0,1}, \omega_{0,3}, \omega_{0,5}, \omega_{0,7}),\qquad n = 11,13,15,\dots.$$ This shows that $\cG_{\text{ev}}(1)^{Sp(2)}$ is of type $\cW(4,6,8,10)$ with minimal strong generating set $\{\omega_{0,1},\dots, \omega_{0,7}\}$, and is a special case of Theorem \ref{thm:sp-invariant}.

Recall the elements $U_{0,n}$ which satisfy $\lim_{k\ra \infty} \frac{1}{k^2} U_{0,n} = \omega_{0,n}$. The above relation \eqref{weight 12} can be deformed to a relation
$$\lambda(k) U_{0,9} = R_9 (L, U_{0,1}, U_{0,3}, U_{0,5}, U_{0,7}),$$ and it can be verified by a lengthy computer calculation that \begin{equation} \label{lambdakformula} \lambda(k) = \frac{(2+k)(39+14k)(49+18k)}{756(9+2k)}.\end{equation} Note that $$\lim_{k\ra \infty} \frac{1}{k^2} \lambda(k) = \frac{1}{6},$$ as expected by \eqref{weight 12}. It is clear from \eqref{lambdakformula} that the set of values of $k$ where the numerator or denominator of $\lambda(k)$ vanishes lies in $S$. Using the fact that $(U_{0,1})_{(1)} ( U_{0,n}) = f(k,n) U_{0,n+2} + Q_n$, we can construct similar decoupling relations $$U_{0,n} = R_n(L, U_{0,1}, U_{0,3}, U_{0,5}, U_{0,7}),$$ for all $n = 11,13,15,\dots$ and $k \notin S$. This implies 
\begin{thm} For all real numbers $k> -5/2$, $\cC^k(2)$ is of type $\cW(2,4,6,8,10)$ with minimal strong generating set $\{L, U_{0,1},U_{0,3}, U_{0,5}, U_{0,7}\}$.
\end{thm}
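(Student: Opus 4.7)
The plan is to upgrade the infinite strong generating set $\{L, U_{0,n} : n = 1, 3, 5, \dots\}$ established in the previous corollary to the finite set $\{L, U_{0,1}, U_{0,3}, U_{0,5}, U_{0,7}\}$ by constructing explicit normally ordered decoupling relations for all $U_{0,n}$ with $n \geq 9$. The method is a standard deformation argument: the classical Pfaffian relation \eqref{weight 12} expressing $\omega_{0,9}$ as a normally ordered polynomial in $\omega_{0,1}, \omega_{0,3}, \omega_{0,5}, \omega_{0,7}$ and their derivatives in $\cG_{\text{ev}}(1)^{\text{Sp}(2)}$ must deform, inside the family $\cC$ over $F$, to a relation of the form $\lambda(k)\, U_{0,9} = R_9(L, U_{0,1}, U_{0,3}, U_{0,5}, U_{0,7})$, where $\lambda(k)$ is the prefactor computed in \eqref{lambdakformula}. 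This is because any null vector of $\cC$ must reduce to the classical one under $\phi$ upon dividing by the appropriate power of $\kappa$, and the leading coefficient $\lambda(k)$ is explicitly computable.

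Given the weight-nine decoupling, I would then run the bootstrap from the previous subsection in reverse. The identity $(U_{0,1})_{(1)}(U_{0,n}) = f(k,n)\, U_{0,n+2} + Q_n$, combined with the fact that $(U_{0,1})_{(1)}$ sends any normally ordered polynomial in $\{L, U_{0,1}, U_{0,3}, U_{0,5}, U_{0,7}\}$ and derivatives back into that same polynomial algebra, gives an inductive construction: if $U_{0,n}$ is known to lie in this polynomial algebra and $f(k,n) \neq 0$, then $U_{0,n+2}$ does as well. Starting from the base case $n = 9$, this produces decoupling relations $U_{0,n} = R_n(L, U_{0,1}, U_{0,3}, U_{0,5}, U_{0,7})$ for all odd $n \geq 9$.

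For the theorem to apply on the full range $k > -5/2$, one must verify two non-vanishing conditions. First, the zeros of the $f(k,n)$ for odd $n \geq 9$: from the explicit formula, they occur at $k = -(7+5n)/(2(1+n))$ and $k = -(14+5n)/(2(2+n))$, and a direct computation shows these approach $-5/2$ from below as $n \to \infty$, so they all satisfy $k \leq -5/2$ and cause no trouble for $k > -5/2$. Second, the roots of the prefactor $\lambda(k) = (2+k)(39+14k)(49+18k) / (756(9+2k))$ in our range: here $-39/14$ and $-49/18$ both lie below $-5/2$, so the only threatening value is $k = -2$, and this case would need to be addressed by a direct computation verifying that an alternative decoupling (arising, for instance, from the weight-twelve null relation at the next available order or by applying $(U_{0,1})_{(1)}$ to a decoupling of $U_{0,11}$) still expresses $U_{0,9}$ in the desired form. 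Minimality of the generating set then follows from the previous proposition (Proposition 4.5 of \cite{CLII}): no normally ordered polynomial relation of weight less than $12$ can exist among $\{L, U_{0,1}, U_{0,3}, U_{0,5}, U_{0,7}\}$ because none exists among their $k \to \infty$ limits $\{t, \omega_{0,1}, \omega_{0,3}, \omega_{0,5}, \omega_{0,7}\}$ in $\cT \otimes \cG_{\text{ev}}(1)^{\text{Sp}(2)}$, by Theorem \ref{thm:sp-invariant}.

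The main obstacle is the computation of $\lambda(k)$ and the verification that no stray zeros appear in the open interval $(-5/2, \infty)$. The formula \eqref{lambdakformula} is obtained only via a lengthy Thielemans-package computation of the actual deformed relation, and the fact that its numerator happens to factor as $(2+k)(39+14k)(49+18k)$ rather than having a root in the regime $k > -2$ is a genuine a posteriori phenomenon that cannot be predicted from the generic-$k$ argument alone. The subsidiary check that the lone exceptional point $k = -2$ does not actually destroy the type $\cW(2,4,6,8,10)$ property — given the coincidence there that $c_L = 0$ and $V^{k+1/2}(\gs\gp_2)$ conformally embeds in $\cW^k(\gs\gp_4, e_{-\theta})$ — is the most delicate part, and in practice is settled by a separate direct computation at that level.
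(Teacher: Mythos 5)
Your argument is essentially the paper's: deform the weight-$12$ relation \eqref{weight 12} of $\cG_{\text{ev}}(1)^{Sp(2)}$ to $\lambda(k)\,U_{0,9}=R_9(L,U_{0,1},U_{0,3},U_{0,5},U_{0,7})$ with $\lambda(k)$ as in \eqref{lambdakformula}, then propagate decoupling relations $U_{0,n}=R_n(L,U_{0,1},U_{0,3},U_{0,5},U_{0,7})$ for $n=11,13,\dots$ by applying $(U_{0,1})_{(1)}$ and using $(U_{0,1})_{(1)}(U_{0,n})=f(k,n)U_{0,n+2}+Q_n$ together with $f(k,n)\neq 0$ for $k>-5/2$, with minimality coming from the absence of normally ordered relations below weight $12$ among the limits in $\cT\otimes\cG_{\text{ev}}(1)^{Sp(2)}$ (Theorem \ref{thm:sp-invariant}).

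The one point where you part company with the paper is the root $k=-2$ of the numerator of $\lambda(k)$. You are right that $-2\notin S$ while $-2>-5/2$, whereas $-39/14$, $-49/18$ and the pole $-9/2$ do lie in $S$; the paper's assertion that all zeros of the numerator and denominator of $\lambda(k)$ lie in $S$ is not consistent with the factor $2+k$ in \eqref{lambdakformula}, so your caution is warranted rather than a defect. However, you do not actually resolve the case $k=-2$: you only say a further direct computation ``would need to be addressed,'' and the alternatives you sketch are vague (note that applying $(U_{0,1})_{(1)}$ to a decoupling of $U_{0,11}$ cannot help, since that operation raises the index $n$ rather than lowering it). As written, then, your argument establishes the statement for all real $k>-5/2$ with $k\neq -2$, which is exactly what the computation of $\lambda(k)$ supports; closing the gap at $k=-2$ (for instance by checking that every coefficient of $R_9$ is divisible by $2+k$, or by an independent computation of a weight-$12$ decoupling relation at that level, where the coset Virasoro has central charge $0$) is an additional step contained neither in your proposal nor, in any explicit form, in the paper's written proof.
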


\subsection{The case $\gg = \gs\gl_4$} 
The generators of $\cW^k(\gs\gl_4, e_{-\theta})$ are $J, X, Y, H, T, G^{1,\pm}, G^{2,\pm}$, where $T$ is a Virasoro element of central charge $$c = -\frac{3 k (3 + 2 k)}{4 + k},$$ $J,H,X,Y$ are primary of weight one, and $G^{1,\pm}, G^{2,\pm}$ are primary of weight $3/2$. Moreover, $H,X,Y$ generate a copy of $V^{k+1}(\gs\gl_2)$, $J$ commutes with $H,X,Y$ and generates a Heisenberg algebra, and we have the following OPE relations:
$$J(z)J(w) \sim 4(2+k)(z-w)^{-2}, \qquad J(z) X^+(w) \sim 2 X^{+}(w),$$
$$G^{1,-}(z) G^{1,+} (w) \sim -2 (k + 2) X(w)(z-w)^{-2} + \big(:JX:  - (k + 2) \partial X\big)(w)(z-w)^{-1},$$
$$G^{2,-} (z) G^{2,+} (w) \sim -2 (k + 2) Y(w)(z-w)^{-2} + \big( :J Y: - (k + 2) \partial Y\big)(w)(z-w)^{-1},$$ 
$$G^{1,-} (z)G^{2,+} (w) \sim 
-2 (k + 1) (k + 2) (z-w)^{-3} + \big((k + 1) J - (k + 2) H\big)(w)(z-w)^{-2} $$ $$ + \bigg( (4+k) T  - \frac{3}{8} :JJ: + \frac{1}{2} :HJ: - \frac{1}{2} :HH: - 2 :XY: +  \frac{1 + k}{2} \partial J - \frac{k}{2} \partial H \bigg)(w)(z-w)^{-1},$$
$$G^{1,+} (z)G^{2,-}(w) \sim 2 (k + 1) (k + 2) (z-w)^{-3} + \big((k + 1) J+ (k + 2) H\big)(w)(z-w)^{-2} $$
$$ + \bigg( -(4+k) T +\frac{3}{8} :JJ: + \frac{1}{2} :HJ: + \frac{1}{2} :HH: + 2 :XY: +  \frac{1 + k}{2} \partial J  + \frac{k}{2} \partial H \bigg)(w)(z-w)^{-1}.$$
We use the notation $$a^i = \lim_{k \ra \infty} \frac{1}{k} G^{i,+},\qquad b^i = \lim_{k \ra \infty} \frac{1}{k} G^{i,-},\qquad i = 1,2,$$ for the generators of $\cG_{\text{ev}}(2)$, which satisfy
$$a^1(z) b^2(w) \sim 2(z-w)^{-3},\qquad b^1(z) a^2(w) \sim -2(z-w)^{-3}.$$ Note that this normalization is different from the one used earlier. By classical invariant theory, the orbifold $\cG_{\text{ev}}(2)^{GL(2)}$ has strong generators
$$\omega_{i,j} =\ : \partial^i a^1 \partial^j b^2:\  +\ : \partial^j a^2 \partial^i b^1:,\qquad i,j \geq 0,$$ and the smaller set $\{\omega_{0,n} |\ n \geq 0\}$ suffices. Recall that the coset $$\cC^k(4) = \text{Com}(V^{k+1}(\gg\gl_{2}), \cW^k(\gs\gl_4, e_{-\theta}))$$ is generically of type $\cW(2,3,\dots, 14)$ and has Virasoro element 
$$L = T - \frac{1}{8 (2 + k)} :JJ: - \frac{1}{4 (3 + k)} :HH: - \frac{1}{3 + k} :XY: +\frac{1}{2 (3 + k)} \partial H $$ with central charge $$c = -\frac{(1 + k) (3 + 2 k) (8 + 3 k)}{(3 + k) (4 + k)}.$$
Next, consider 
$$u_{0,n} = \ :G^{1,-} \partial^n G^{2,+}  : + :( \partial^n G^{1,+}) G^{2,-}:,\qquad n \geq 0.$$ 
These satisfy $\lim_{k\ra \infty} \frac{1}{k^2} u_{0,n} = \omega_{0,n}$, but do not lie in $\cC^k(4)$ and we would like to find suitable corrections $U_{0,n}$ in $\cC^k(4)$. First, define
$$U_{0,0} = u_{0,0} - \frac{14 + 5 k}{24 (2 + k)^2}  :JJJ: - \frac{1}{2 (2 + k)} :JHH: - \frac{2}{2 + k} :JXY: - \frac{1}{2} :(\partial J)H: $$ $$  -\frac{k}{2 (2 + k)} : J (\partial H): + \frac{4 + k}{2 + k}:TJ: - \frac{16 + 9 k + 2 k^2}{6 (2 + k)} \partial^2 J.$$  It it not difficult to verify that $U_{0,0}$ lies in $\cC^k$ and is primary with respect to $L$.
\begin{lemma} For all $k \geq 0$, we have \begin{equation} (U_{0,0})_{(1)} ( u_{0,n}) = f(n,k) u_{0,n+1} +P_n, \qquad f(n,k) = -\frac{(k+4) (n+4) (5 + k + 3 n + k n)}{(n+1)},\end{equation} where $P_n$ can be expressed as a normally ordered polynomial in $T,J,H,X,Y, u_{0,0},\dots, u_{0,n}$ and their derivatives. \end{lemma}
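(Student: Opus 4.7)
The plan is to compute $(U_{0,0})_{(1)}(u_{0,n})$ directly by applying the noncommutative Wick formula. Since $U_{0,0}\in\cC^k(4)$, its OPE with each of $J, H, X, Y$ is trivial, so all contributions to $(U_{0,0})_{(1)}(u_{0,n})$ arise through the singular OPEs $U_{0,0}(z)G^{i,\pm}(w)$ for $i=1,2$. The first step is therefore to record these four OPEs explicitly. The field $(U_{0,0})_{(m)}G^{i,\pm}$ has conformal weight $7/2 - m$; because $\cW^k(\gs\gl_4,e_{-\theta})$ has no strong generators of half-integer weight less than $3/2$, these terms vanish for $m\geq 3$, leaving only $m=0,1,2$. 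Each is a straightforward computation from the explicit formula for $U_{0,0}$ together with the OPEs recorded at the start of this subsection.

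Next, apply Wick's rule to each piece of $u_{0,n} = :G^{1,-}\partial^n G^{2,+}: + :(\partial^n G^{1,+})G^{2,-}:$. Using the identity
\[
(U_{0,0})_{(1)}\partial^n B \;=\; \partial^n\bigl((U_{0,0})_{(1)}B\bigr)\;+\;n\,\partial^{n-1}\bigl((U_{0,0})_{(0)}B\bigr),
\]
together with the standard formula for $(U_{0,0})_{(1)}(:BC:)$, the right-hand side expands as a sum of normally ordered products. The only terms of the form $:G^{1,-}\partial^{n+1}G^{2,+}:$ and $:(\partial^{n+1}G^{1,+})G^{2,-}:$ come from the $\partial G^{i,\pm}$-piece of the first-order pole of $U_{0,0}(z)G^{i,\pm}(w)$; extracting the scalar coefficient and tracking its dependence on $n$ through the combinatorial factors from Wick's rule yields the formula for $f(n,k)$. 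This gives the leading $f(n,k)\,u_{0,n+1}$ term.

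The remaining contributions are of two types: (i) normally ordered monomials of the form $:A\,\partial^j G^{r,\epsilon}\,\partial^{\ell}G^{s,\delta}:$ with $A$ a polynomial in the affine generators and $j+\ell\leq n$; and (ii) polynomials lying in the affine subalgebra generated by $T,J,H,X,Y$. One then rewrites each type (i) term as a polynomial in $T,J,H,X,Y,u_{0,0},\dots,u_{0,n}$ and their derivatives. The key identity here is that the antisymmetric bilinear $:G^{1,-}\partial^jG^{2,+}: - :(\partial^j G^{1,+})G^{2,-}:$ can, using the explicit OPEs $G^{i,\pm}(z)G^{r,\mp}(w)$ recorded above and the identity $\partial(:AB:) = :(\partial A)B: + :A\partial B:$, be expressed as a multiple of $\partial u_{0,j-1}$ plus affine corrections; together with the defining formula for $u_{0,j}$ this shows that every symmetric/antisymmetric bilinear in the $G$'s with at most $n$ total derivatives reduces to the claimed generating set. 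Type (ii) terms are automatically in the generating set.

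The main obstacle is the bookkeeping for the reductions of step (iii): one must verify that every monomial produced by Wick's rule really can be absorbed into the $u_{0,m}$ basis. This is straightforward in principle but combinatorially involved, and it is the step that best parallels the proof of Theorem 6.3 of \cite{AL}. The reduction is guided by the classical limit: $\lim_{k\to\infty}k^{-2}u_{0,n} = \omega_{0,n}$, and in $\cG_{\text{ev}}(2)^{\text{GL}(2)}$ the corresponding classical statement $(\omega_{0,0})_{(1)}(\omega_{0,n}) = c_n\,\omega_{0,n+1} + Q_n$ is a direct consequence of the first fundamental theorem of invariant theory for $\text{GL}(2)$. Hence only the quantum corrections require explicit verification, which is feasible since all relevant OPEs are polynomial and of bounded weight at each stage.
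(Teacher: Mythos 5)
Your overall strategy (direct Wick calculus for $(U_{0,0})_{(1)}$ acting on the $G$-bilinears, guided by the classical limit in $\cG_{\text{ev}}(2)^{\text{GL}(2)}$) is indeed the intended one: the paper omits the proof and refers to the analogous computation in Theorem 6.3 of \cite{AL}. However, as written your argument has a genuine gap at the reduction step, which is exactly where the content of the lemma lies. The ``key identity'' you invoke is false: the combination $:G^{1,-}\partial^jG^{2,+}: - :(\partial^jG^{1,+})G^{2,-}:$ is not $\gg\gl_2$-invariant (only the sum defining $u_{0,j}$ is), while $\partial u_{0,j-1}$ and any ``affine corrections'' are invariant, so no such identity can hold; the two types of bilinears are linearly independent modulo the affine subalgebra. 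Consequently you have not established the structural assertion of the lemma, namely that every surviving $G$-bilinear --- including those occurring inside trilinear terms $:A\,\partial^jG^{r,\epsilon}\partial^{\ell}G^{s,\delta}:$ with $A$ among $T,J,H,X,Y$, which arise from the affine/Virasoro correction terms in $U_{0,0}$ --- can be organized into the invariant combinations $u_{0,0},\dots,u_{0,n}$ and their derivatives. Invariance of the total expression $(U_{0,0})_{(1)}u_{0,n}$ (which does hold, since $U_{0,0}$ lies in the coset and $u_{0,n}$ is killed by the affine zero modes) does not by itself force each monomial in a PBW expansion to involve only invariant bilinears, so this point needs an actual argument or explicit computation; your false identity papers over it.

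A second, smaller issue: the coefficient $f(n,k)$ is never actually derived --- you assert that ``extracting the scalar coefficient \dots yields the formula'' --- and your identification of its source is inaccurate. Terms contributing to the coefficient of $u_{0,n+1}$ come from the $\partial G^{i,\pm}$-piece of the second-order pole $(U_{0,0})_{(1)}G^{i,\pm}$, from the $\partial^2G^{i,\pm}$-piece of the first-order pole $(U_{0,0})_{(0)}G^{i,\pm}$ (entering with the factor $n$ through your derivative identity), and from terms such as $:(\partial G^{1,-})\partial^nG^{2,+}:$ where the new derivative lands on the other factor; the latter feed into the $u_{0,n+1}$-coefficient only after rewriting general bilinears $:\partial^aG\,\partial^bG:$ with $a+b=n+1$ as combinations of $\partial^cu_{0,m}$, and the $n$-dependence of $f(n,k)$ comes precisely from this mixing. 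So to complete the proof along your (correct) lines you must record the three nontrivial poles of $U_{0,0}(z)G^{i,\pm}(w)$ explicitly, carry out the bookkeeping for all sources of $(n+1)$-derivative bilinears, and verify --- not assume --- that the non-invariant bilinear combinations cancel or assemble into the allowed generators, as is done in the analogous computation of \cite{AL}.
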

\begin{proof} This is similar to the proof of Theorem 6.3 of \cite{AL} and is omitted.\end{proof}

Let $$S = \{ -4\} \cup  \{ - \frac{5 + 3 n}{1 + n}|\ n\geq 0\},$$ which is the set of values of $k$ where $f(n,k)$ is either zero or undefined.

We have $(U_{0,0})_{(1)} (U_{0,0}) = f(0,k) u_{0,1} + P_0$. Clearly $\tilde{U}_{0,1} = u_{0,1} + \frac{1}{f(0,k)} P_0$ lies in $\cC^k$ and satisfies $\lim_{k\ra \infty} \frac{1}{k^2} \tilde{U}_{0,1} = \omega_{0,1}$, but $\tilde{U}_{0,1}$ is not the desired correction of $u_{0,1}$ since 
$$(U_{0,0})_{(1)} (\tilde{U}_{0,1}) \neq f(1,k) u_{0,2} + P_1,$$ for any $P_1$ depending only on $J,H,X,Y,T, u_{0,0}$, and their derivatives. The problem is that $P_0$ contains the terms $:T G^{1,-} G^{2,+}  :$ and $:T G^{1,+} G^{2,-}:$ which have the property that $(U_{0,0})_{(1)}  (:T G^{1,-} G^{2,+}  :)$ and $(U_{0,0})_{(1)} (:T G^{1,+} G^{2,-}:)$ have a nonzero coefficient of $u_{0,1}$ when expressed as a normally ordered polynomial in $J,H,X,Y,T, u_{0,0}, u_{0,1}$, and their derivatives. This can be corrected as follows:
$$U_{0,1} = \tilde{U}_{0,1} + :LU_{0,0}:.$$ This has the property that  
$(U_{0,0})_{(1)} (U_{0,1}) = f(1,k) u_{0,2} + P_1$ where $P_1$ depends only on $J,H,X,Y,T, u_{0,0}$, and their derivatives. Moreover, $$(U_{0,0})_{(1)} (U_{0,1}) = U_{0,1} + :L U_{0,0}:.$$

As above, we can construct elements $U_{0,n}$ for all $n\geq 0$ with the property that $$(U_{0,0})_{(1)} (U_{0,n}) = f(k,n) U_{0,n+1} + Q_n,$$ where $Q_n$ is a linear combination of the fields  $$:(\partial^{i_1} L) \cdots (\partial^{i_r} L )(\partial^k U_{0,m}):,\qquad m=0,1,\dots, n,\qquad i_1,\dots, i_r, k\geq 0.$$ Moreover, since $U_{0,n}$ all lie in the algebra generated by $L$ and $U_{0,0}$, all points where the denominator of any term appearing in $U_{0,n}$ vanishes lie in $S$. Also, all structure constants in the OPE of $U_{0,0}(z) U_{0,0}(w)$ have poles lying in $S$. It follows that $\{L, U_{0,n}|\ n \geq 0\}$ closes under OPE for all $k\notin S$, and all poles of the structure constants in the OPEs of these fields lie in $S$. By Remark \ref{mainthm:remark} we obtain 

\begin{thm} For all real numbers $k>-3$, $\cC^k(4)$ is strongly generated by $\{L, U_{0,n}|\ n \geq 0\}$. \end{thm}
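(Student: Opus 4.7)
The statement at this stage is essentially a corollary of the bootstrap construction carried out in the preceding pages, so my plan is to package what has been built into a direct application of Remark \ref{mainthm:remark}. The key input is the infinite strong generating set $\{t\} \cup \{\omega_{0,n} : n \geq 0\}$ for $\lim_{k \to \infty} \cC^k(4) \cong \cT \otimes \cG_{\text{ev}}(2)^{\text{GL}(2)}$, where $t$ is the generator of $\cT$ (which lifts to $L$) and $\omega_{0,n}$ are the $\text{GL}(2)$-invariants of $\cG_{\text{ev}}(2)$ described above; the fact that these $\omega_{0,n}$ alone (together with $t$) suffice follows from classical invariant theory for $\text{GL}(2)$. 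I would first observe that the constructed fields $U_{0,n} \in \cC^k(4)$ satisfy $\lim_{k \to \infty} \frac{1}{k^2} U_{0,n} = \omega_{0,n}$, so that $\{L\} \cup \{U_{0,n} : n \geq 0\}$ is exactly the lifted set $T'$ of Remark \ref{mainthm:remark}.

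Next, I would show that this set is OPE-closed for all $k \notin S$, where $S = \{-4\} \cup \{-(5+3n)/(1+n) : n \geq 0\}$ is the exceptional set identified during the bootstrap. This has already been verified for the pair $(U_{0,0}, U_{0,0})$ by direct computation. For the remaining OPEs, I would use the fact, emphasized in the construction, that every $U_{0,n}$ lies in the subalgebra of $\cW^k(\gs\gl_4, e_{-\theta})$ generated by $L$ and $U_{0,0}$, with all denominators in the correction formulas vanishing only inside $S$. Combined with the behaviour of $L(z)U_{0,n}(w)$ (controlled by $L$ being a Virasoro field and each $U_{0,n}$ being built from $L$ and $U_{0,0}$), this implies that the structure constants in the OPEs of any two elements of $\{L\} \cup \{U_{0,n}: n \geq 0\}$ are rational functions of $k$ whose poles lie in $S$.

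With these two properties in hand, Remark \ref{mainthm:remark} yields that $\{L, U_{0,n} : n \geq 0\}$ strongly generates $\cC^k(4) = \cC/(\kappa - \sqrt{k})$ for every $k \notin K \cup S$, where $K$ is the exceptional set from Theorem \ref{propertiesofc}. To finish I would verify $(K \cup S) \cap (-3, \infty) = \emptyset$: the set $S$ lies in $(-\infty, -3]$ since $-(5+3n)/(1+n) = -3 - 2/(n+1) < -3$ for all $n \geq 0$ (and $-4 < -3$); and for $\gg' = \gg\gl_2$, the affine part is $V^{k+1}(\gs\gl_2)$ with $h^{\vee} = 2$, so $K$ consists of $0$ together with rational $k$ satisfying $k+1 \leq -2$, i.e.\ $k \leq -3$. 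Hence every real $k > -3$ is generic for this set, and the theorem follows.

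The main conceptual obstacle has already been overcome in the bootstrap: the subtlety is that naive corrections $\tilde U_{0,n}$ obstruct OPE-closure by producing spurious normally ordered terms like $:T G^{1,\pm} G^{2,\mp}:$ whose $(U_{0,0})_{(1)}$-action leaks into the next generator, and the fix $U_{0,n} = \tilde U_{0,n} + :L U_{0,n-1}: + \cdots$ is what guarantees that no pole outside $S$ ever enters either the $U_{0,n}$ themselves or their OPE structure constants. The final step here is therefore purely bookkeeping: checking that all the denominators introduced in the bootstrap, the roots of $f(n,k)$, and the set $K$ are jointly confined to $(-\infty, -3]$.
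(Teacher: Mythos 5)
Your argument is essentially the paper's: you record the same bootstrap facts (every $U_{0,n}$ lies in the algebra generated by $L$ and $U_{0,0}$, so all denominators and all structure constants in $U_{0,0}(z)U_{0,0}(w)$ have poles only in $S$), conclude OPE-closure of $\{L,U_{0,n}\}$ for $k\notin S$, and then invoke Remark \ref{mainthm:remark} together with the observation that $S$ and the nongeneric rationals for $V^{k+1}(\gg\gl_2)$ lie in $(-\infty,-3]$ --- exactly the route the paper takes, only spelled out more explicitly. One small blemish: by your own description $0\in K$, so the literal claim $(K\cup S)\cap(-3,\infty)=\emptyset$ fails at $k=0$; the paper asserts the same range $k>-3$ without comment, so this matches its level of detail, but strictly the point $k=0$ (where the specialization $\kappa=\sqrt{k}=0$ is excluded from the deformable-family setup) would deserve a separate remark rather than being folded into the emptiness claim.
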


Now we consider relations among these generators. Recall that in $\cG_{\text{ev}}(2)^{GL(2)}$, the first normally ordered relation among the generators $\omega_{i,j}$ occurs at weight $15$ and has leading term
$$:\omega_{0,0} \omega_{1,1} \omega_{2,2}: - :\omega_{0,0} \omega_{1,2} \omega_{2,1}: - :\omega_{0,1} \omega_{1,0} \omega_{2,2}: $$ $$ + 
:\omega_{0,1} \omega_{1,2} \omega_{2,0}:  - :\omega_{0,2} \omega_{1,1} \omega_{2,0}: + :\omega_{0,2} \omega_{1,0} \omega_{2,1}:,$$
 which is a normally ordering of a classical determinantal relation in $\text{gr}(\cG_{\text{ev}}(2))^{GL(2)}$. The subleading terms can all be expressed as normally ordered monomials in $\omega_{0,n}$ for $n=0,1,\dots, 12$. In fact, the relation can be rewritten in the form 
 $$\lambda \omega_{0,12} = P(\omega_{0,0}, \omega_{0,1},\dots, \omega_{0,11}),$$ where $P$ is a normally ordered polynomial in $ \omega_{0,0}, \omega_{0,1},\dots, \omega_{0,11}$ and their derivatives, and $\lambda \neq 0$. This relation deforms to a normally ordered relation among the generators $L, U_{0,0}, U_{0,1},\dots, U_{0,12}$ for $\cC^k(4)$, which has the form $$\lambda(k) U_{0,12} = Q(L, U_{0,0}, U_{0,1},\dots, U_{0,11}).$$ Here $\lambda(k)$ is a rational function of $k$ satisfying $\lim_{k\ra \infty} \frac{1}{k^2} \lambda(k) = \lambda$. All coefficients appearing in this relation are rational functions of $k$ whose denominators lie in the set $S$. An immediate consequence is
\begin{thm} For all real numbers $k>-3$ except for the roots of the numerator of $\lambda(k)$, $\cC^k(4)$ is of type $\cW(2,3,\dots, 14)$ and has a minimal strong generating set $\{L, U_{0,n}|\ n= 0,1,\dots, 11\}$.\end{thm}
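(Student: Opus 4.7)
The plan is to upgrade the infinite strong generating set $\{L, U_{0,n}\mid n\geq 0\}$ from the previous theorem into a finite one by producing, for each $n\geq 12$, a decoupling relation that expresses $U_{0,n}$ as a normally ordered polynomial in $L, U_{0,0}, \dots, U_{0,11}$ and their derivatives. The base case is the relation
\[
\lambda(k)\, U_{0,12} = Q(L,U_{0,0},\dots,U_{0,11})
\]
stated just above the theorem: it arises by deforming the weight-$15$ classical determinantal relation in $\cG_{\text{ev}}(2)^{GL(2)}$, which expresses $\omega_{0,12}$ as a normally ordered polynomial in $\omega_{0,0},\dots,\omega_{0,11}$ and their derivatives. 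Since $\lim_{k\to\infty}\tfrac{1}{k^2}U_{0,n} = \omega_{0,n}$ and the leading coefficient of the classical relation is nonzero, $\lambda(k)$ is a nonzero rational function of $k$. Hence for all real $k>-3$ outside the finite zero set of the numerator of $\lambda(k)$, one may solve for $U_{0,12}$.

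To eliminate $U_{0,n}$ for $n>12$, I would bootstrap using the identity
\[
(U_{0,0})_{(1)}(U_{0,n}) = f(k,n)\, U_{0,n+1} + Q_n,
\]
where $Q_n$ is expressible as a normally ordered polynomial in $L, U_{0,0},\dots,U_{0,n}$ and their derivatives. Assuming inductively that $U_{0,n} = R_n(L, U_{0,0},\dots,U_{0,11})$ for some such $R_n$, apply $(U_{0,0})_{(1)}$ to both sides: the non-commutative Wick formula, together with the induction hypothesis applied to $(U_{0,0})_{(1)}(U_{0,m})$ for $m\leq n$, expresses $(U_{0,0})_{(1)}(R_n)$ as a polynomial in $L, U_{0,0},\dots,U_{0,11}, U_{0,n+1}$. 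Rearranging gives
\[
f(k,n)\, U_{0,n+1} = (U_{0,0})_{(1)}(R_n) - Q_n,
\]
whose right-hand side is a polynomial in $L, U_{0,0},\dots,U_{0,11}$. Since the zeros of $f(\cdot,n)$ all lie in $S \subset (-\infty,-3)$, we have $f(k,n)\neq 0$ for all $k>-3$, and the induction closes.

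For minimality, I would use a deformation argument. Any normally ordered polynomial relation of weight at most $14$ among $L, U_{0,0},\dots,U_{0,11}$ and their derivatives would, upon passage to the limit $\kappa\to\infty$ of the deformable family (after the rescaling $U_{0,n}\mapsto \tfrac{1}{k^2}U_{0,n}$), descend to a nontrivial relation in $\lim_{k\to\infty}\cC^k(4) \cong \cT\otimes \cG_{\text{ev}}(2)^{GL(2)}$ of weight at most $14$ among $L,\omega_{0,0},\dots,\omega_{0,11}$. By Theorem~\ref{thm:gl-invariant}, $\cG_{\text{ev}}(2)^{GL(2)}$ is of type $\cW(3,4,\dots,14)$ and its listed generators are free up to weight $14$; the weight-$2$ generator from $\cT$ is independent of them. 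No such relation exists, so $\{L,U_{0,0},\dots,U_{0,11}\}$ is a minimal strong generating set and $\cC^k(4)$ is of type $\cW(2,3,\dots,14)$.

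The main obstacle is the weight-$15$ computation producing $\lambda(k)\,U_{0,12}=Q(L,U_{0,0},\dots,U_{0,11})$ and the explicit form of $\lambda(k)$: it deforms the classical determinantal identity together with many normally-ordered subleading corrections and has to be carried out with the Thielemans Mathematica package \cite{T}, in direct analogy with the formula derived for $\lambda(k)$ in the $\gs\gp_4$ case. Once that single computation is in hand, the subsequent bootstrap and the minimality argument are formal consequences of the bracket structure of $(U_{0,0})_{(1)}$ and the deformable family formalism of Section~\ref{sect:defandlim}.
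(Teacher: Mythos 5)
Your proposal is correct and follows essentially the same route as the paper: the theorem is obtained as an immediate consequence of the deformed weight-$15$ relation $\lambda(k)\,U_{0,12}=Q(L,U_{0,0},\dots,U_{0,11})$ (whose coefficients have poles only in $S$), with the higher $U_{0,n}$ eliminated by repeatedly applying $(U_{0,0})_{(1)}$, using that the zeros of $f(k,n)$ lie in $S\subset(-\infty,-3)$, exactly as you describe. Your minimality argument via the $\kappa\to\infty$ limit and the absence of normally ordered relations below weight $15$ in $\cT\otimes\cG_{\text{ev}}(2)^{GL(2)}$ is the same reasoning the paper leaves implicit.
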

Unfortunately, it is too difficult to compute the numerator of $\lambda(k)$, although it has finitely many roots. By analogy with the case of $\gs\gp_4$ above, we conjecture that the set of distinct roots of the numerator of $\lambda(k)$ lies in $S$. If so, this implies that $\cC^k(4)$ is of type $\cW(2,3,\dots, 14)$ for all real numbers $k>-3$.

\section{Cosets of $\cW_k(\gg, e_{-\theta})$ at nongeneric levels} \label{sect:simple}

Of much more interest than cosets $\cC^k$ of $\cW^k(\gg, e_{-\theta})$ for generic values of $k$, are cosets $\cC_k$ of the simple quotient $\cW_k(\gg, e_{-\theta})$ for special values of $k$ when $\cW^k(\gg, e_{-\theta})$ is not simple. In \cite{ACL}, one such family was considered, namely, $$\cC_{p/2-3} = \text{Com}(\cH, \cW_{p/2-3}(\gs\gl_3, e_{-\theta})).$$ It was shown in \cite{ArI} that for $p=5,7,9,\dots$, $\cW_{p/2-3}(\gs\gl_3, e_{-\theta})$ is $C_2$-cofinite and rational, and the main result of \cite{ACL} is that $\cC_{p/2-3}$ is isomorphic to the principal, rational $\cW(\gs\gl_{p-3})$-algebra with central charge $c = -\frac{3}{p}(p-4)^2$. Recall that $$\cC^{p/2-3} = \text{Com}(\cH, \cW^{p/2-3}(\gs\gl_3, e_{-\theta}))$$ is of type $\cW(2,3,4,5,6,7)$ for all $p$ as above. Since the natural map $\cC^{p/2-3} \ra \cC_{p/2-3}$ is surjective, this family of $\cW(\gs\gl_{p-3})$-algebras has the following {\it uniform truncation property}; it is of type $\cW(2,3,4,5,6,7)$ for all $p>9$, even though the universal $\cW(\gs\gl_{p-3})$-algebra is of type $\cW(2,3,\dots, p-3)$. In fact, $\cW_{p/2-3}(\gs\gl_3, e_{-\theta})$ is a simple current extension of $V_L \otimes \cW(\gs\gl_{p-3})$, where $V_L$ is the lattice vertex algebra with $L = \sqrt{3p-9}\mathbb{Z}$. This is a surprising coincidence, and one of the goals of this paper is to find other coincidences of this kind.

More generally, suppose that $k$ is a value for which $\cW^k(\gg, e_{-\theta})$ is not simple. Let $\cI_k$ be the maximal proper ideal of $\cW^k(\gg, e_{-\theta})$ graded by conformal weight, so that $$\cW_k(\gg, e_{-\theta}) = \cW^k(\gg, e_{-\theta}) / \cI_k$$ is simple. Let $\gg'\subset \gg^{\natural}_0$ be a simple Lie subalgebra, so that $V(\gg') \cong V^{\ell}(\gg')$ for some $\ell$. Let $\cJ$ denote the kernel of the map $V^{\ell}(\gg') \ra \cW_k(\gg, e_{-\theta})$, and suppose that $\cJ$ is maximal so that $V^{\ell}(\gg') / \cJ\cong L_{\ell}(\gg')$. Finally, let $$\cC^k = \text{Com}(V^{\ell}(\gg'), \cW^k(\gg, e_{-\theta})),\qquad \cC_k = \text{Com}(L_{\ell}(\gg'), \cW_k(\gg, e_{-\theta})).$$ By Theorem 8.1 of \cite{CLII}, if $\ell+h^\vee$ is a positive real number, where $h^{\vee}$ is the dual Coxeter number of $\gg'$, then $\pi_k: \cC^k \ra \cC_k$ is surjective. A similar result holds if $\gg'$ is any reductive Lie subalgebra such that this condition holds for each simple ideal. If $\pi_k$ is surjective and we know strong generators for $\cC^k$, they will descend to strong generators for $\cC_k$. This shows that determining both the generic behavior of $\cC^k$ and the structure of the nongeneric set provides a powerful approach to studying $\cC_k$. Using this method, we describe $\cC_k$ in a few interesting examples, and we conjecture some new coincidences of the above kind.

\subsection{Type $C$ series at positive half-integer levels}
Let $\gg = \gs\gp_{2n}$ for $n\geq 2$, and let $k$ be a half-integer such that $k+1/2$ is a positive integer. 
It was shown in \cite{ArIII} that the associated variety of $L_k(\gg)$ is the minimal nilpotent orbit closure of $\fing$, and hence \cite{ArIII}, $\cW^k(\gs\gp_{2n}, e_{-\theta})$ is $C_2$-cofinite.
By \cite{DM2} this implies that the maximal proper graded ideal $\cI_k \subset \cW^k(\gs\gp_{2n}, e_{-\theta})$ contains $:(J^e)^{k+3/2}:$, where $e \in\gs\gp_{2n-2}$ denotes the highest root vector. Therefore we have an embedding $$L_{k+1/2}(\gs\gp_{2n-2}) \hookrightarrow \cW_k(\gs\gp_{2n}, e_{-\theta}).$$ By Corollary \ref{simplicity}, 
$$\cC_k(n) = \text{Com}(L_{k+1/2}(\gs\gp_{2n-2}), \cW_k(\gs\gp_{2n},e_{-\theta}))$$ is simple. 
The vertex algebra $\cW_k(\gs\gp_{2n}, e_{-\theta})$ is conjecturally rational \cite{ArIII}, so one expects that $\cC_k(n)$ is $C_2$-cofinite and rational as well. However, since $\cW_k(\gs\gp_{2n}, e_{-\theta})$ is not generally a simple current extension of $L_{k+1/2}(\gs\gp_{2n-2}) \otimes \cC_k(n)$, this is out of reach at the moment.

We now specialize to the case $n=2$. Recall that for all real numbers $k> -5/2$, $\cC^k(2)$ is of type $\cW(2,4,6,8,10)$ and the map $\pi_k: \cC^k(2) \ra \cC_k(2)$ is surjective. We use the same notation $\{L, U_{0,1}, U_{0,3},U_{0,5},U_{0,7}\}$ for the images of the generators in $\cC_k(2)$. Since $\pi_k$ is surjective, this set strongly generates $\cC_k(2)$, but it need not be minimal.

It was shown by Kawasetsu \cite{Ka} that $\cC_{1/2}(2)$ is isomorphic to the rational Virasoro algebra $L(-25/7,0)$. Here we give an alternative proof of this result. It is easy to verify that 
$U_{0,1} + \frac{132}{5}:LL:  - \frac{21}{2} \partial^2 L$ lies in the ideal $\cI_{1/2} \subset \cC^{1/2}(2)$, so we have the relation $$U_{0,1} = - \frac{132}{5}:LL:  + \frac{21}{2} \partial^2 L$$ in $\cC_{1/2}(2)$. By applying $(U_{0,1})_{(1)} $ successively to this relation, and using the fact that 
$$(U_{0,1})_{(1)} (U_{0,n}) = \frac{14 (5 + n) (4 + 3 n) (8 + 3 n)}{5 (1 + n) (2 + n)} U_{0,n+2} + Q_n,\qquad n = 1,3,5,\dots,$$
we obtain relations $$U_{0,n} = P_n(L),\qquad n = 3,5,7$$ in $\cC_{1/2}(2)$, where  $P_n(L)$ is a normally ordered polynomial in $L$ and its derivatives. Therefore $\cC_{1/2}(2)$ is strongly generated by $L$, and since $\cC_{1/2}(2)$ is simple we recover Kawasetsu's result.

\begin{thm} $\cC_{3/2}(2)$ is isomorphic to the rational $\cW(\gs\gp_4,f_{\text{prin}})$-algebra with central charge $c = -49/6$.
\end{thm}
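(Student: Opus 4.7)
The strategy parallels the proof just given that $\cC_{1/2}(2)\cong L(-25/7,0)$, but requires two additional decoupling steps since the target $\cW(\gs\gp_4,f_{\text{prin}})$ is of type $\cW(2,4)$ rather than $\cW(2)$. Substituting $k=3/2$ into the central charge formula $c=-6(2+k)^2(1+2k)/((3+k)(5+2k))$ gives $c=-49/6$, in agreement with the asserted value. At $k=3/2$, we have $k+1/2=2$, so $L_2(\gs\gp_2)=L_2(\gs\gl_2)$, which is simple and rational; hence Corollary \ref{simplicity} shows that $\cC_{3/2}(2)$ is simple. By the results of Section \ref{sect:nongeneric}, $\cC^{3/2}(2)$ is strongly generated by $\{L,U_{0,1},U_{0,3},U_{0,5},U_{0,7}\}$, and since $\pi_{3/2}:\cC^{3/2}(2)\to\cC_{3/2}(2)$ is surjective, so is $\cC_{3/2}(2)$.

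The crux of the argument is to produce, via the Thielemans Mathematica package, an explicit element of the maximal proper ideal $\cI_{3/2}\subset\cW^{3/2}(\gs\gp_4,e_{-\theta})$ whose projection to $\cC^{3/2}(2)$ has the form $U_{0,3}-P(L,U_{0,1})$, where $P$ is a normally ordered polynomial in $L,U_{0,1}$ and their derivatives. Equivalently, I would search for a weight $6$ singular vector in $\cW^{3/2}(\gs\gp_4,e_{-\theta})$ whose image under $\pi_{3/2}$, after projection onto the $\text{Sp}(2)$-invariants, yields such a decoupling relation for $U_{0,3}$ in $\cC_{3/2}(2)$.

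Once this base relation is in hand, I propagate it using the identity $(U_{0,1})_{(1)}(U_{0,n})=f(3/2,n)\,U_{0,n+2}+Q_n(L,U_{0,1},U_{0,3},\dots,U_{0,n})$ established in Section \ref{sect:nongeneric}. A direct substitution shows $f(3/2,3)\neq 0$ and $f(3/2,5)\neq 0$, so applying $(U_{0,1})_{(1)}$ twice to the relation $U_{0,3}=P(L,U_{0,1})$ and solving for the leading terms yields successive decoupling relations $U_{0,5}=P_5(L,U_{0,1})$ and $U_{0,7}=P_7(L,U_{0,1})$ in $\cC_{3/2}(2)$. It then follows that $\cC_{3/2}(2)$ is strongly generated by $\{L,U_{0,1}\}$ alone and is therefore of type $\cW(2,4)$, with $U_{0,1}$ primary (or correctable to a primary) of weight $4$.

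Finally, to identify this simple $\cW(2,4)$-algebra at $c=-49/6$ with $\cW_{3/2}(\gs\gp_4,f_{\text{prin}})$, I appeal to the generic uniqueness of vertex algebras of type $\cW(2,4)$: the family of such algebras forms a one-parameter family parametrized by the central charge, and at generic $c$ the OPE algebra is uniquely determined (up to rescaling of the weight $4$ primary) by the Jacobi identity. Since the central charge $c=-49/6$ corresponds to an admissible, rational level of the principal $\cW$-algebra of $\gs\gp_4$, and both $\cC_{3/2}(2)$ and $\cW_{3/2}(\gs\gp_4,f_{\text{prin}})$ are simple with this central charge and of type $\cW(2,4)$, they must coincide. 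The main obstacle is the computational step of locating the weight $6$ singular vector and verifying that its projection to $\cC^{3/2}(2)$ does indeed yield a $U_{0,3}$-decoupling relation rather than a trivial identity; a secondary issue is justifying the uniqueness appeal, which if unavailable from the literature on $\cW(2,4)$-algebras may need to be settled by a direct check that the self-coupling constant of $U_{0,1}$ matches that of the $\gs\gp_4$ principal $\cW$-algebra at the requisite level.
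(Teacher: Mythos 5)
Your decoupling argument is exactly the paper's: at $k=3/2$ the coset $\cC^{3/2}(2)$ is strongly generated by $\{L,U_{0,1},U_{0,3},U_{0,5},U_{0,7}\}$, the map to $\cC_{3/2}(2)$ is surjective, a computer search produces an explicit weight-$6$ element of $\cI_{3/2}$ of the form $U_{0,3}-P_3(L,U_{0,1})$, and applying $(U_{0,1})_{(1)}$ (with $f(3/2,3),f(3/2,5)\neq 0$) decouples $U_{0,5}$ and $U_{0,7}$, so $\cC_{3/2}(2)$ is simple of type $\cW(2,4)$ with $c=-49/6$. The gap is in your final identification step. There is no ``generic uniqueness'' theorem for vertex algebras of type $\cW(2,4)$ at a fixed central charge: the Jacobi identities for a $\cW(2,4)$ structure do not determine the self-coupling constant of the weight-$4$ primary as a function of $c$ alone. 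Concretely, the $\mathbb{Z}_2$-orbifold (bosonic projection) of the $N=1$ super-Virasoro algebra is a $\cW(2,4)$-algebra existing for every central charge and is not isomorphic to $\cW(\gs\gp_4,f_{\text{prin}})$ at the same $c$; the classification results in the physics literature exhibit distinct one-parameter families (and sporadic solutions) of $\cW(2,4)$ algebras distinguished precisely by the value of the self-coupling. Moreover, even if a generic-$c$ uniqueness statement were available, $c=-49/6$ is a single specific value, so a genericity argument gives you nothing there.

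What actually closes the argument — and what the paper does — is the direct check you relegate to a ``secondary issue'': compute the OPEs of $\{L,U_{0,1}\}$ in $\cC_{3/2}(2)$ (equivalently in $\cC^{3/2}(2)$ modulo $\cI_{3/2}$) and match them, after normalizing $U_{0,1}$, against the explicit structure constants of $\cW(\gs\gp_4,f_{\text{prin}})$ given in \cite{ZhII}; in particular the self-coupling $(U_{0,1})_{(3)}U_{0,1}$-through-$(U_{0,1})_{(0)}U_{0,1}$ data must be verified, not inferred. Once that is done, your remaining reasoning is correct and is the paper's: $\cC_{3/2}(2)$ is simple by Corollary \ref{simplicity} (since $L_2(\gs\gl_2)$ is simple and rational), hence it is the simple quotient of $\cW^{s}(\gs\gp_4,f_{\text{prin}})$ at this central charge, and that simple quotient is rational because the level is nondegenerate admissible \cite{ArIII}. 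So replace the uniqueness appeal by the explicit OPE comparison and the proof is complete.
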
 

\begin{proof} In weight $6$, it can be checked by computer that the element  
$$ U_{0,3} - \frac{39664}{1701}  :LLL: + \frac{117533}{5103} :(\partial^2 L)L: +\frac{1679}{972} :(\partial L)(\partial L): $$ $$- \frac{3}{2} :L U_{0,1}:  + \frac{34801}{20412} \partial^4  L    +  \frac{37}{4536} \partial^2 U_{0,1}$$ in $\cC^{3/2}(2)$ lies in the ideal $\cI_{3/2}$. Therefore the corresponding element in $\cC_{3/2}(2)$ is a relation expressing $U_{0,3}$ as a normally ordered polynomial $P_3(L, U_{0,1})$  in $L,U_{0,1}$ and their derivatives. 
By applying $(U_{0,1})_{(1)}$ successively to this relation, and using the fact that $$(U_{0,1})_{(1)} (U_{0,n}) = \frac{6 (5 + n) (5 + 2 n) (5 + 4 n)}{(1 + n) (2 + n)} U_{0,n+2} + Q_n,$$ we obtain relations $U_{0,5} = R_5(L,U_{0,1})$ and $U_{0,7} = R_7(L, U_{0,1})$, where $R_5$ and $R_7$ are normally ordered polynomials in $L, U_{0,1}$ and their derivatives. Therefore $\cC_{3/2}(2)$ is of type $\cW(2,4)$ with strong generators $\{L, U_{0,1}\}$. Checking that it is actually a $\cW(\gs\gp_4,f_{\text{prin}})$-algebra is straightforward by computer using the explicit formulas for $\cW(\gs\gp_4,f_{\text{prin}})$ appearing in \cite{ZhII}. Since $\cC_{3/2}(2)$ is simple and the simple quotient of $\cW(\gs\gp_4,f_{\text{prin}})$ at this central charge is rational (\cite{ArIII}, see below), the claim follows. \end{proof}

We thus have that $\cW_{3/2}(\gs\gp_4, e_{-\theta})$ is an extension of a rational VOA. The VOA extension problem has a precise categorical formulation and especially if the representation category of a VOA is semi-simple, then this is also true for any extension provided the extended VOA is a simple VOA \cite{HKL,CKM}. In other words:
\begin{cor} $\cW_{3/2}(\gs\gp_4, e_{-\theta})$ is rational.
\end{cor}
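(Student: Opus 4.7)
The plan is to invoke the VOA extension theorem of \cite{HKL, CKM} cited in the paragraph preceding the corollary. All the ingredients are already in hand, so the proof should be short.

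First, at $k = 3/2$ we have $k + 1/2 = 2$, so $L_{k+1/2}(\gs\gp_2) = L_2(\gs\gp_2)$ is rational by the standard Frenkel--Zhu theorem on affine VOAs at positive integer level. By the theorem just proved, $\cC_{3/2}(2)$ is isomorphic to the simple principal $\cW$-algebra of $\gs\gp_4$ at central charge $c=-49/6$, whose rationality is the result of \cite{ArIII} invoked in that proof. Since tensor products of rational VOAs are rational,
\[
\cB \ := \ L_2(\gs\gp_2) \otimes \cC_{3/2}(2)
\]
is rational and $C_2$-cofinite.

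Second, I would observe that $\cW_{3/2}(\gs\gp_4, e_{-\theta})$ is a simple VOA containing $\cB$ as a conformal vertex subalgebra. The conformal structures match because the Virasoro element of $\cC^k(2)$ was defined as $T$ minus the Sugawara vector of $V^{k+1/2}(\gs\gp_2)$, and this identity descends to the simple quotient. Rationality of $\cB$ implies that $\cW_{3/2}(\gs\gp_4, e_{-\theta})$ decomposes as a direct sum of simple $\cB$-modules. Thus $\cW_{3/2}(\gs\gp_4, e_{-\theta})$ is a simple VOA extension of the rational VOA $\cB$, and the extension theorem of \cite{HKL, CKM} immediately gives rationality.

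The only point that requires a small check is that the hypotheses of the extension theorem are satisfied in the form needed: namely that $\cW_{3/2}(\gs\gp_4, e_{-\theta})$ is a simple, CFT-type, $\mathbb{Q}_{\geq 0}$-graded extension of $\cB$ that corresponds to a commutative algebra object in the vertex tensor category of $\cB$-modules. These are standard features of minimal $\cW$-algebras: CFT-type and $\tfrac{1}{2}\mathbb{Z}_{\geq 0}$-grading follow from the structure theorem of Kac--Wakimoto, simplicity is by definition of $\cW_k(\gg,e_{-\theta})$, and the commutative-algebra-object structure is automatic for any simple conformal extension of a rational VOA whose representation category is semisimple. With these formalities in place, the extension theorem applies and yields the corollary.
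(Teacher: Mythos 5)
Your proposal is correct and follows essentially the same route as the paper: the paper likewise observes that $\cW_{3/2}(\gs\gp_4, e_{-\theta})$ is a simple extension of the rational VOA $L_2(\gs\gp_2)\otimes \cC_{3/2}(2)$ (the latter factor rational by the preceding theorem) and then cites the extension results of \cite{HKL,CKM}. Your additional remarks on the conformal embedding and the categorical hypotheses only spell out what the paper leaves implicit.
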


The central charge of $\cC_k(n)$ is $$-\frac{(1 + 2 k) (2 + 2 k + n) (2 + 3 k + 2 n)}{(1 + k + n) (1 + 2 k + 2 n)}.$$ This is the same as the central charge of the principal $\cW$-algebra $\cW_s(\gs\gp_{2m},f_{\text{prin}})$, where $$m=k+1/2,\qquad s+(k+3/2)=p/q,\qquad (p,q)=(n+k+1/2,2n+2k+2).$$
Since $q=2(n+k+1/2)+1$ is odd,
$p\geq m+1$ and $q\geq 2m$,
the level $s$ is a nondegenerate admissible number, so that
$\cW_s(\gs\gp_{2m},f_{\text{prin}})$ is rational and $C_2$-cofinite \cite{ArIII,ArII}.

Based on this observation, we make the following conjecture.
\begin{conj} For all $n\geq 2$ and $k$ such that $k+1/2$ is a positive integer, $\cC_k(n)$ is isomorphic to the $C_2$-cofinite, rational principal $\cW$-algebra
$\cW_s(\gs\gp_{2m},f_{\text{prin}})$ as above.
\end{conj}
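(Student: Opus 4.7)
The plan is to apply the uniqueness theorem for minimal $\cW$-algebras (Theorem \ref{uniqueness}) in reverse, by constructing a simple VOA extension $\cA_k$ of $L_{k+1/2}(\gs\gp_{2n-2}) \otimes \cW_s(\gs\gp_{2m}, f_{\text{prin}})$ whose structure is forced by the uniqueness theorem to coincide with $\cW_k(\gs\gp_{2n}, e_{-\theta})$. The conjecture would then follow by taking the commutant with $L_{k+1/2}(\gs\gp_{2n-2})$ on both sides.

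Before constructing the extension, I would gather the ingredients already available. The central charges of $\cC_k(n)$ and $\cW_s(\gs\gp_{2m}, f_{\text{prin}})$ coincide, as verified in the text. Simplicity of $\cC_k(n)$ follows from Corollary \ref{simplicity} applied to the embedding $L_{k+1/2}(\gs\gp_{2n-2}) \hookrightarrow \cW_k(\gs\gp_{2n}, e_{-\theta})$, using rationality of the affine VOA at the positive integer level $k+1/2 = m$. The $C_2$-cofiniteness of $\cW_k(\gs\gp_{2n}, e_{-\theta})$ (Arakawa) and its expected rationality give a finite decomposition
\[
\cW_k(\gs\gp_{2n}, e_{-\theta}) = \bigoplus_j M_j \otimes N_j,
\]
with $M_j$ simple $L_{k+1/2}(\gs\gp_{2n-2})$-modules and $N_j$ simple $\cC_k(n)$-modules. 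The weight $3/2$ generators $G^u$ carry the standard $\gs\gp_{2n-2}$-module $V = \C^{2n-2}$ of conformal weight $h_V = C_2(V)/(2(m+n))$, so at least one summand must be $M_V \otimes N$ with $N$ of conformal weight $3/2 - h_V$ (under the conjectural identification $\cC_k(n) \cong \cW_s(\gs\gp_{2m}, f_{\text{prin}})$).

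To build $\cA_k$ in reverse, I would use the theory of VOA extensions from commutative algebra objects in the braided tensor category of $L_{k+1/2}(\gs\gp_{2n-2}) \otimes \cW_s(\gs\gp_{2m}, f_{\text{prin}})$-modules \cite{HKL, CKM}. Arakawa's classification of simple modules of the rational principal $\cW$-algebra at the nondegenerate admissible level $s$ identifies the candidate modules $N_j$, and the fusion rules of $L_{k+1/2}(\gs\gp_{2n-2})$ determine the $M_j$'s compatible with commutativity. Once $\cA_k$ is constructed and shown to be simple, I would verify the hypotheses of Theorem \ref{uniqueness}: matching central charge, affine subalgebra as a quotient of $V^{k+1/2}(\gs\gp_{2n-2})$, weight $3/2$ subspace of dimension $2n-2$ carrying the standard $\gs\gp_{2n-2}$-module, and the OPE of weight $3/2$ fields with $T$ and the affine currents dictated by the module structure on $M_V \otimes N$. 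The uniqueness theorem then gives a surjection $\cW^k(\gs\gp_{2n}, e_{-\theta}) \twoheadrightarrow \cA_k$, which by simplicity forces $\cA_k \cong \cW_k(\gs\gp_{2n}, e_{-\theta})$.

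The principal obstacle is precisely the construction of $\cA_k$. The paper itself notes that $\cW_k(\gs\gp_{2n}, e_{-\theta})$ is not generally a simple current extension of $L_{k+1/2}(\gs\gp_{2n-2}) \otimes \cC_k(n)$, so one cannot rely on the much easier $\Z_2$-graded construction; instead, one must allow a larger commutative associative algebra object in $\on{Rep}(L_{k+1/2}(\gs\gp_{2n-2})) \boxtimes \on{Rep}(\cW_s(\gs\gp_{2m}, f_{\text{prin}}))$ with several summands, and verify the balancing (twist) and associativity axioms in the full braided tensor category. A complementary, more computational route would generalize the Section \ref{sect:nongeneric} analysis to arbitrary $n$: construct an infinite strong generating set $\{L, U_{0,1}, U_{0,3}, \dots\}$ for $\cC^k(n)$ with structure-constant poles in an explicit discrete set $S$, and at each $k$ with $k+1/2 \in \Z_{>0}$ use $C_2$-cofiniteness of $\cW_k(\gs\gp_{2n}, e_{-\theta})$ to extract null vectors in $\cC^k(n)$ decoupling all $U_{0,j}$ with $j \geq 2m+1$; this would reduce $\cC_k(n)$ to type $\cW(2, 4, \dots, 2m)$, after which a direct OPE comparison with $\cW_s(\gs\gp_{2m}, f_{\text{prin}})$ together with simplicity of both sides would finish the argument, as in the $n=2,\ k=3/2$ case.
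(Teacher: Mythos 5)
This statement is a \emph{conjecture} in the paper: the authors offer no general proof, only the central-charge coincidence as evidence, plus proofs of the isolated cases $n=2$, $k=1/2$ and $n=2$, $k=3/2$ obtained by the explicit computational machinery of Section \ref{sect:nongeneric} (strong generators $L, U_{0,1},\dots,U_{0,7}$ valid for all real $k>-5/2$, explicit null vectors in $\cI_k$ giving decoupling relations, then a direct OPE comparison with $\cW(\gs\gp_4,f_{\text{prin}})$ and an appeal to simplicity). Your proposal is likewise not a proof but a program, and you should be clear that neither of its two routes closes the gap. The primary route (build a simple extension $\cA_k$ of $L_{k+1/2}(\gs\gp_{2n-2})\otimes\cW_s(\gs\gp_{2m},f_{\text{prin}})$ of minimal $\cW$-algebra type and invoke Theorem \ref{uniqueness}) is exactly the strategy the authors advocate in the introduction and execute for the type $A$, $k=0$ case via a free-field realization; but for type $C$ the construction of $\cA_k$ is the entire difficulty. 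You do not exhibit a candidate commutative algebra object: you would need the precise list of $\cW_s(\gs\gp_{2m},f_{\text{prin}})$-modules and $L_{k+1/2}(\gs\gp_{2n-2})$-modules entering the decomposition, their fusion rules and twists, and a verification of commutativity, associativity and simplicity, none of which is supplied; and since the extension is not a simple current extension, no shortcut is available. There is also a latent circularity in your preparatory step: writing $\cW_k(\gs\gp_{2n},e_{-\theta})=\bigoplus_j M_j\otimes N_j$ with the $N_j$ \emph{simple} $\cC_k(n)$-modules uses the conjectural rationality of $\cW_k(\gs\gp_{2n},e_{-\theta})$ (or of $\cC_k(n)$), which is precisely what the conjecture is expected to deliver, so it cannot be taken as an input.

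Your complementary computational route is the one the paper actually uses, but only for $n=2$: it rests on computer verification of specific elements of $\cI_{1/2}$ and $\cI_{3/2}$ and on the explicit action $(U_{0,1})_{(1)}(U_{0,n})=f(k,n)U_{0,n+2}+Q_n$. To generalize you would need, for every $n$ and every positive half-integer $k+1/2=m$, (i) an infinite strong generating set for $\cC^k(n)$ with structure-constant poles avoiding these levels (the paper establishes this only for $\gs\gp_4$), and (ii) explicit null vectors decoupling $U_{0,j}$ for $j\geq 2m+1$ at each such level, followed by an OPE identification with $\cW_s(\gs\gp_{2m},f_{\text{prin}})$, for which no closed-form presentation is available beyond small rank. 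Neither ingredient is established here, which is why the statement remains a conjecture in the paper; your proposal correctly identifies the obstacles but does not overcome them.
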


Note that this conjecture would imply the rationality of $\cW_{k+1/2}(\gs\gp_{2n}, e_{-\theta})$ for all $n\geq 2$ and all $k$ such that $k+1/2$ is a positive integer.

\subsection{Type $A$ series at positive integer levels}
Let $\gg = \gs\gl_n$ for $n\geq 4$, and let $k$ be a non-negative integer. 
By Lemma 10.3 of \cite{AMsheet}
the maximal proper graded ideal $\cI_k\subset \cW^k(\gs\gl_n, e_{-\theta})$ contains $:(J^e)^{k+2}:$, where $e \in\gs\gl_{n-2}$ denotes the highest root vector. Therefore we have an embedding $$ L_{k+1}(\gg\gl_{n-2}) = \cH \otimes L_{k+1}(\gs\gl_{n-2}) \hookrightarrow \cW_k(\gs\gl_n, e_{-\theta}),$$ and by Corollary \ref{simplicity},
$$\cC_k(n) = \text{Com}(L_{k+1}(\gg\gl_{n-2}), \cW_k(\gs\gl_n, e_{-\theta}))$$ is simple. Since the projection $\pi_k: \cC^k(n) \ra \cC_k(n)$ is surjective, $\cC_k(n)$ is the simple quotient of $\cC^k(n)$.

The $\cW$-algebra $\cW_k(\gs\gl_n, e_{-\theta})$ should be \lq\lq small" in the following sense.
\begin{conj} \label{conj.dim1}
For all integers $n\geq 4$ and $k\geq -1$, 
 the associated variety of
$\cW_k(\gs\gl_n, e_{-\theta})$ is isomorphic to $\mathbb{A}^1$.
In particular, it is one-dimensional. \end{conj}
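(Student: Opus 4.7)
The plan is to prove the conjecture by bounding the associated variety $X_{\cW_k}=\operatorname{Spec}(\cW_k/C_2(\cW_k))$ both from above and below inside the Slodowy slice $\mathcal{S}_{f_{-\theta}}$. Recall that $X_{\cW^k(\gs\gl_n,e_{-\theta})}=\mathcal{S}_{f_{-\theta}}$ has dimension $(n-1)^2$, and that $X_{\cW_k}$ is a conic, $\mathbb{C}^*$-stable, Poisson subvariety of $\mathcal{S}_{f_{-\theta}}$.

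For the upper bound I would exploit the embedding $L_{k+1}(\gg\gl_{n-2})=\cH\otimes L_{k+1}(\gs\gl_{n-2})\hookrightarrow\cW_k$ provided by the null vector $:(J^e)^{k+2}:$ from Lemma 10.3 of \cite{AMsheet}. For $k\geq 0$, $L_{k+1}(\gs\gl_{n-2})$ is integrable and has associated variety $\{0\}$, so the induced ring map $R_{L_{k+1}(\gs\gl_{n-2})}\to R_{\cW_k}$ forces the images of the $\gs\gl_{n-2}$-currents in $R_{\cW_k}$ to be nilpotent; this already cuts out the $\gs\gl_{n-2}$-directions of $\mathcal{S}_{f_{-\theta}}$ on $X_{\cW_k}$. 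Propagating the $\gs\gl_{n-2}$-translates of $:(J^e)^{k+2}:$ through the minimal-$\cW$ OPE of \cite{KWI}, via brackets of the form $(G^u)_{(m)}:(J^e)^{k+2}:$, should reduce the weight-$3/2$ directions ($\gg_{\pm 1/2}$) and the remaining weight-$1$ direction $\gg_{\pm 1}$ to zero as well, leaving only the Heisenberg direction spanned by $J$. The boundary case $k=-1$ is handled separately, since $L_0(\gs\gl_{n-2})=\mathbb{C}$ is trivial and one must work directly with the singular-vector structure of $\cI_{-1}$.

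For the lower bound, the image $\bar J\in R_{\cW_k}$ of the Heisenberg generator is nonzero for trivial weight reasons: any element of $C_2(\cW_k)$ of conformal weight $1$ must be of the form $a_{(-2)}b$ with $\mathrm{wt}(a)+\mathrm{wt}(b)=0$, forcing $a=b=|0\rangle$ and giving zero. To pass from $\bar J\neq 0$ to non-nilpotency of $\bar J$, one would show $:J^N:\notin C_2(\cW_k)$ for all $N$ by exhibiting a compatible filtration on $\cW_k$ under which $\cH\hookrightarrow\cW_k$ sits as a retract at the associated-graded level. Non-nilpotency then yields a surjection $R_{\cW_k}\twoheadrightarrow\mathbb{C}[\bar J]$, so $\mathbb{A}^1\hookrightarrow X_{\cW_k}$; combined with the upper bound $\dim X_{\cW_k}\leq 1$ this forces $X_{\cW_k}\cong\mathbb{A}^1$.

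The main obstacle will be the upper bound, specifically the degeneration of the Slodowy-slice directions transverse to $\gg^{\natural}$: the single known singular vector $:(J^e)^{k+2}:\in\cI_k$ together with $\gs\gl_{n-2}$-integrability only directly constrains the $\gg^{\natural}$-directions, and further input is needed to control the $G^u$-directions. A complementary strategy I would pursue in parallel is to realize $\cW_k(\gs\gl_n,e_{-\theta})$ as a quantum Drinfeld--Sokolov reduction of some $L_{\ell}(\gs\gl_n)$ at an auxiliary level $\ell$ whose associated variety is the minimal nilpotent orbit closure $\overline{\mathbb{O}_{\min}}$, and then to invoke Arakawa's transfer theorem for associated varieties under Hamiltonian reduction (cf.\ the results of \cite{Ara12}) to identify $X_{\cW_k}$ with $\overline{\mathbb{O}_{\min}}\cap\mathcal{S}_{f_{-\theta}}\cong\mathbb{A}^1$ directly. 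Locating such an auxiliary level $\ell$ for every integer $k\geq -1$ appears to be the decisive technical challenge.
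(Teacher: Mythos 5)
The statement you are trying to prove is not a theorem of the paper but an open conjecture (Conjecture \ref{conj.dim1}): the authors give no proof, and the only case they claim is $k=-1$, which they attribute to Theorem 7.4 of \cite{AMsheet}. Your text is likewise a research plan rather than a proof --- you yourself flag both the upper bound (killing the $\gg_{\pm 1/2}$-directions) and the existence of an ``auxiliary level'' as unresolved --- so there is a genuine gap by construction. Beyond that, several of the proposed mechanisms are flawed as stated. For the upper bound, ``propagating'' the singular vector $:(J^e)^{k+2}:$ through the OPEs is exactly the hard step and no mechanism is given; moreover the coordinate of the Slodowy slice transverse to $\gg^{\natural}\oplus\gg_{1/2}$ is the $\mathbb{C}e$-direction, which corresponds to the weight-two (Virasoro) generator of the $\cW$-algebra, not to a ``remaining weight-one direction,'' and nothing in your sketch makes its image in the $C_2$-algebra nilpotent.

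The lower bound is also not a soft statement. Knowing $\bar J\neq 0$ in $R_{\cW_k}$ and that $\cH$ embeds does not give non-nilpotency of $\bar J$: the Bershadsky--Polyakov algebras $\cW_{p/2-3}(\gs\gl_3,e_{-\theta})$ contain a Heisenberg subalgebra yet are $C_2$-cofinite (\cite{ArI}), as do the conformal embedding cases of \cite{A-PI,A-PII}, so any ``filtration/retract'' argument that does not use $n\geq 4$ and the specific level in an essential way proves too much. Indeed, non-lisseness of $\cW_k(\gs\gl_n,e_{-\theta})$ for $k\in\mathbb{N}$, $n\geq 4$ is itself only a conjecture of \cite{AM1} quoted in this paper. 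Finally, the parallel Drinfeld--Sokolov strategy is internally inconsistent: the minimal reduction of $L_{\ell}(\gs\gl_n)$ is a quotient of $\cW^{\ell}(\gs\gl_n,e_{-\theta})$ at the \emph{same} level $\ell$ (the central charge is determined by $\ell$), so there is no auxiliary level to choose; and if $X_{L_{\ell}}=\overline{\mathbb{O}}_{\min}$, then its intersection with the Slodowy slice at the minimal nilpotent is the single point, which is precisely the lisse situation exploited for type $C$ in Section 7, not $\mathbb{A}^1$. To get a one-dimensional answer one needs the associated variety of the affine algebra to be two dimensions larger than $\mathbb{O}_{\min}$, e.g.\ a Dixmier sheet closure, which is exactly how \cite{AMsheet} handles $k=-1$; extending that to all integers $k\geq 0$ is the open problem, and your proposal does not supply the missing input.
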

Here the associated variety $X_V$ of a vertex algebra $V$ is defined as
$X_V=\on{Specm}(R_V)$,
where $R_V$ is the Zhu's $C_2$-algebra of $V$ \cite{Ara12}.
Conjecture \ref{conj.dim1}
holds for $k=-1$ by Theorem 7.4 of \cite{AMsheet}.

We specialize to the case $n=4$. Recall that for all real numbers $k>-3$, $\cC_k(4)$ is strongly generated by the fields $\{L, U_{0,n}|\ n\geq 0\}$ and $\pi_k: \cC^k(4) \ra \cC_k(4)$ is surjective. 

\begin{thm}
\label{thm:c04}
 $\cC_0(4)$ is isomorphic to the simple Zamolodchikov $\cW_3$-algebra with $c=-2$ \cite{Zam}, which we denote by $\cW_{3,-2}$. \end{thm}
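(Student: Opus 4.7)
Following the strategy laid out in the introduction, the plan is to apply the uniqueness theorem for minimal $\cW$-algebras (Theorem \ref{uniqueness}) by realizing $\cW_0(\gs\gl_4, e_{-\theta})$ as a simple vertex algebra extension of $L_1(\gg\gl_2)\otimes \cW_{3,-2}$, from which $\cC_0(4)\cong \cW_{3,-2}$ is read off by taking the commutant of $L_1(\gg\gl_2)$.

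As a first consistency check, at $k=0$ the central-charge formula $c = -(1+k)(3+2k)(8+3k)/[(3+k)(4+k)]$ for $\cC^k(4)$ specializes to $c=-2$, matching that of $\cW_{3,-2}$, while $L_1(\gg\gl_2)=\cH\otimes L_1(\gs\gl_2)$ has central charge $2$ and $\cW_0(\gs\gl_4,e_{-\theta})$ has central charge $0=2+(-2)$. Simplicity of $\cC_0(4)$ was already established in Section \ref{sect:simple}, so the task is to identify the strongly generated vertex algebra.

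The core step is the construction of an extension
$$\cA \;=\; \bigoplus_{\lambda} M_\lambda\boxtimes N_\lambda$$
of $L_1(\gg\gl_2)\otimes \cW_{3,-2}$, where $M_\lambda$ and $N_\lambda$ range over suitable simple modules of $L_1(\gg\gl_2)$ and $\cW_{3,-2}$ respectively, with $M_0\boxtimes N_0 = L_1(\gg\gl_2)\otimes \cW_{3,-2}$. Using the (well-understood) module category of $L_1(\gg\gl_2)$ and the singlet-type algebra $\cW_{3,-2}$, one chooses the extension data so that (i) each $M_\lambda\boxtimes N_\lambda$ occurs with multiplicity one, making $\cA$ simple via the framework of \cite{CKL,CKM}, and (ii) the weight-$3/2$ subspace of $\cA$ is four-dimensional and carries the $\gg\gl_2$-module structure $\C^2\oplus(\C^2)^*$, matching $(\gs\gl_4)_{-1/2}$.

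Granted such an $\cA$, the hypotheses of Theorem \ref{uniqueness} at $\gg=\gs\gl_4$, $k=0$ are then verified: the central charges agree; the affine subalgebra of $\cA$ is $L_1(\gg\gl_2)$, a homomorphic image of $V^1(\gg\gl_2)=V^{k+1}(\gg^\natural)|_{k=0}$; the four weight-$3/2$ fields are even and carry the prescribed $\gg^\natural$-module structure, so their OPEs with $T$ and with the weight-one fields are forced; and $\cA$ is strongly generated by the Virasoro, weight-one, and weight-$3/2$ fields, because the weight-$3$ generator of $\cW_{3,-2}$ is recovered as a normally ordered polynomial in the weight-$3/2$ fields (essentially the element $U_{0,0}$ of the previous sections, compatibly with Theorem~5.1 of \cite{KWI}). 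Theorem \ref{uniqueness} then yields a surjection $\cW^0(\gs\gl_4,e_{-\theta})\twoheadrightarrow \cA$, and simplicity of $\cA$ forces $\cA\cong \cW_0(\gs\gl_4,e_{-\theta})$. Taking $L_1(\gg\gl_2)$-commutants picks out precisely the $\lambda=0$ summand, so $\cC_0(4)=\operatorname{Com}(L_1(\gg\gl_2),\cA)\cong \cW_{3,-2}$.

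The main obstacle is the construction of $\cA$: identifying the correct collection of simple bimodules $M_\lambda\boxtimes N_\lambda$ and showing that they assemble into a genuine vertex algebra (not merely an abelian intertwining algebra), along with simplicity. This demands non-trivial input on the representation theory and fusion rules of $\cW_{3,-2}$ together with those of $L_1(\gg\gl_2)$; once this extension-theoretic step is in place, Theorem \ref{uniqueness} handles the remaining identification essentially automatically.
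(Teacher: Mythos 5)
Your plan is directionally aligned with the second argument the paper gives for the $k=0$ cases (an extension of $L_1(\gg\gl_2)\otimes\cW_{3,-2}$ identified with $\cW_0(\gs\gl_4,e_{-\theta})$ via Theorem \ref{uniqueness}), but the step you yourself flag as the "main obstacle" is precisely the mathematical content, and the route you propose to supply it does not go through as stated. You want to assemble $\cA=\bigoplus_\lambda M_\lambda\boxtimes N_\lambda$ from simple modules using the extension machinery of \cite{CKL,CKM}, but that machinery presupposes a vertex tensor category structure on the relevant module categories. While $L_1(\gg\gl_2)$ is unproblematic, $\cW_{3,-2}$ is the singlet-type algebra at $c=-2$: it is neither rational nor $C_2$-cofinite, its module category is logarithmic, and the required braided tensor structure and fusion data are not available off the shelf, so "choosing the extension data" is not a routine citation but an open construction. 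A second, related gap: hypothesis (3) of Theorem \ref{uniqueness} requires $\cA$ to be strongly generated in weights $1,3/2,2$, i.e.\ the weight-$3$ generator of $\cW_{3,-2}$ must be a normally ordered polynomial in the weight-$3/2$ fields; with an abstractly defined extension you have no concrete handle on those fields, so this cannot simply be asserted ("essentially $U_{0,0}$" presumes the identification you are trying to prove).

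The paper avoids both problems. Its proof of Theorem \ref{thm:c04} is a direct computation with the explicit strong generators $\{L,U_{0,n}\}$ of $\cC^0(4)$ from Section \ref{sect:nongeneric}: one checks that $U_{0,1}+\frac{16}{5}:LL:-\frac{3}{5}\partial^2 L$ lies in $\cI_0$, propagates this decoupling relation by applying $(U_{0,0})_{(1)}$ to eliminate all $U_{0,n}$, $n\geq 1$, verifies the $\cW_3$ OPEs of $\{L,U_{0,0}\}$ modulo $\cI_0$, and invokes simplicity of $\cC_0(4)$ (Corollary \ref{simplicity}). The later, more general proof for all $n\geq 4$, $k=0$ does follow your extension-plus-uniqueness philosophy, but it sidesteps abstract extension theory by producing the simple extension concretely as the $U(1)$-orbifold $(\cE(2)\otimes\cA(1))^{U(1)}$, where $\cE(2)^{U(1)}\cong L_1(\gg\gl_2)$, $\cA(1)^{U(1)}\cong\cW_{3,-2}$, simplicity comes from \cite{DLM}, and the strong-generation hypothesis is visible in the free-field model (the weight-$3$ field $:X^+\partial X^-:$ is a normally ordered polynomial in the weight-$3/2$ fields $:c^iX^+:$, $:b^iX^-:$). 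If you want to salvage your outline, replacing the abstract bimodule construction with such a concrete free-field realization is the missing idea.
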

\begin{proof} It is not difficult to verify by computer that the element 
$$U_{0,1} + \frac{16}{5}:LL:  - \frac{3}{5} \partial^2 L$$ in $\cC^0(4)$ lies in the ideal $\cI_0$, and hence gives rise to the relation $U_{0,1} = - \frac{16}{5}:LL:  + \frac{3}{5} \partial^2 L$ in $\cC_0(4)$. By applying $(U_{0,0})_{(1)}$ successively to this relation, and using the fact that $$(U_{0,0})_{(1)} (U_{0,n}) = -\frac{4 (4 + n) (5 + 3 n)}{1 + n} U_{0,n+1} + Q_n,$$ we obtain relations $$U_{0,n} = P_n(L,U_{0,0}),\qquad n\geq 0.$$ It follows that $\cC_0(4)$ is strongly generated by $\{L, U_{0,0}\}$. Finally, it is not difficult to check by computer that they satisfy the OPE relations of $\cW_{3,-2}$, modulo the ideal $\cI_0$.
\end{proof}

\begin{remark} 
\label{rem:c04}
$\cC_0(4)$ has a one-dimensional associated variety, and admits a well-known extension called a triplet algebra which is $C_2$-cofinite but non-rational. \end{remark}

\begin{thm} \label{thm:c14} $\cC_1(4)$ is isomorphic to the simple parafermion algebra $$K_{-6/5}(\gs\gl_2) = \text{Com}(\cH, L_{-6/5}(\gs\gl_2)),$$ which has central charge $c=-11/2$.
\end{thm}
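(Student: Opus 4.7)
The plan is to follow the template of the proof of Theorem \ref{thm:c04} (the $\cC_0(4)$ case). Since $\pi_1 : \cC^1(4) \to \cC_1(4)$ is surjective by Theorem 8.1 of \cite{CLII}, the generating set $\{L, U_{0,n} \mid n \geq 0\}$ for $\cC^1(4)$ descends to a strong generating set for $\cC_1(4)$. Because the simple $\gs\gl_2$-parafermion algebra $K_{-6/5}(\gs\gl_2)$ is strongly generated by four fields of weights $2, 3, 4, 5$, the target is to produce decoupling relations in $\cC_1(4)$ expressing every $U_{0,n}$ with $n \geq 3$ in terms of $\{L, U_{0,0}, U_{0,1}, U_{0,2}\}$.

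First I would use the Thielemans OPE package to hunt for a singular vector of conformal weight $6$ in $\cW^1(\gs\gl_4, e_{-\theta})$ whose image in $\cC^1(4)$ has the form $U_{0,3} + P_3(L, U_{0,0}, U_{0,1}, U_{0,2})$, thereby providing the decoupling relation at the bottom of the ladder. The existence of nontrivial singular vectors in $\cI_1$ is guaranteed by the embedding $L_2(\gg\gl_2) \hookrightarrow \cW_1(\gs\gl_4, e_{-\theta})$, which in particular forces $:(J^e)^3: \in \cI_1$ (Lemma 10.3 of \cite{AMsheet}); the task is to verify computationally that among the resulting relations in $\cC_1(4)$ there is at least one with nontrivial $U_{0,3}$-component.

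Given this initial relation, the bootstrap identity $(U_{0,0})_{(1)} U_{0,n} = f(n, 1)\, U_{0,n+1} + Q_n$ with $f(n,1) = -10(n+4)(2n+3)/(n+1)$, which is nonzero for all $n \geq 0$ (consistent with $k = 1 \notin S$), would let me inductively derive relations $U_{0,n} = P_n(L, U_{0,0}, U_{0,1}, U_{0,2})$ for every $n \geq 3$. This shows that $\cC_1(4)$ is strongly generated by four fields of weights $2, 3, 4, 5$.

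Finally I would identify $\cC_1(4)$ with $K_{-6/5}(\gs\gl_2)$. Specializing the central charge of $\cC_k(4)$ to $k=1$ gives $c = -11/2$, agreeing with that of $K_{-6/5}(\gs\gl_2)$. Both algebras are simple ($\cC_1(4)$ by Corollary \ref{simplicity}), so it suffices to exhibit a nontrivial vertex algebra homomorphism between them. This I would do by normalizing the four generators suitably and comparing OPEs against the standard explicit presentation of the parafermion algebra by direct computer calculation. The hard part will be Step 1: the weight $6$ subspace of $\cW^1(\gs\gl_4, e_{-\theta})$ is large, so extracting the relevant singular vector and computing its projection to $\cC^1(4)$ is a substantial machine calculation; matching OPEs against the parafermion in the final step is equally heavy but conceptually routine.
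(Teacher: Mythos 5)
Your proposal is correct and follows essentially the same route as the paper: a computer-found weight $6$ element of $\cI_1$ yielding the relation $U_{0,3} = P_3(L, U_{0,0}, U_{0,1}, U_{0,2})$, the $(U_{0,0})_{(1)}$ bootstrap (your $f(n,1) = -10(n+4)(2n+3)/(n+1)$ is correct and nonvanishing) to decouple all higher $U_{0,n}$, and then a computer comparison of the OPEs of the weight $2,3,4,5$ generators with those of the parafermion algebra. The only cosmetic difference is the final step: rather than ``exhibiting a nontrivial homomorphism'' (which by itself gives injectivity but not surjectivity), the paper closes the argument by noting that a simple $\Z_{\geq 0}$-graded vertex algebra with finite-dimensional weight spaces is uniquely determined by a strong generating set together with its OPE algebra, so the OPE check plus simplicity of both sides immediately gives the isomorphism.
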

\begin{proof} Note first that if $\cV$ is a vertex algebra with strong generating set $\{\omega_1,\dots,\omega_r\}$, the OPE algebra among the generators does not determine $\cV$ uniquely. Instead, this OPE algebra determines a {\it category} of vertex algebras. If $\cV$ is $\mathbb{Z}_{\geq 0}$-graded by weight and has finite-dimensional weight spaces, there is a unique {\it simple} vertex algebra in this category. It is characterized by the property that it is a homomorphic image of any other vertex algebra in the category. In particular, if two simple $\mathbb{Z}_{\geq 0}$-graded vertex algebras with finite-dimensional weight spaces have the same strong generating set and OPE algebra, they must be isomorphic.

It is not difficult to find the generators of weight $2,3,4,5$ of the universal parafermion algebra $K^{-6/5}(\gs\gl_2)$ and compute all OPEs among these generators by computer. Clearly the simple parafermion algebra $K_{-6/5}(\gs\gl_2)$ has the same strong generators and OPEs, although there are now additional normally ordered relations among the generators.

It can be checked by computer that in the ideal $\cI_1 \subset \cC^1(4)$, there is an element of weight $6$ of the form
$$U_{0,3} - P_3(L, U_{0,0}, U_{0,1}, U_{0,2}),$$ where $P_3$ is a normally ordered polynomial in $L, U_{0,0}, U_{0,1}, U_{0,2}$, and their derivatives. Therefore in $\cC_1(4)$, there is a relation of weight $6$ of the form 
$$U_{0,3} = P_3(L, U_{0,0}, U_{0,1}, U_{0,2}).$$  Applying $(U_{0,0})_{(1)}$ successively to this relation, we obtain relations $$U_{0,n} = P_n(L, U_{0,0}, U_{0,1},U_{0,2}),\qquad n\geq 0,$$ so $\cC_1(4)$ is of type $\cW(2,3,4,5)$ with strong generators $\{L, U_{0,0}, U_{0,1},U_{0,2}\}$. It is then a straightforward but lengthy computer calculation to verify that these generators satisfy the OPE relations of $K^{-6/5}(\gs\gl_2)$, modulo the ideal $\cI_1$. Finally, since there is a unique simple vertex algebra of type $\cW(2,3,4,5)$ with this OPE algebra, the claim follows.
\end{proof}

\begin{remark}
One might guess that $K_{-6/5}(\gs\gl_2)$ is isomorphic to the algebra $\cW(\gs\gl_5, f_{\text{prin}})$ with $c = -11/2$. However, this turns out to be false, which can be shown using the explicit OPE relations for $\cW(\gs\gl_5, f_{\text{prin}})$ given in \cite{ZhI}. 

Aspects of the representation theory of parafermion algebras of $\gs\gl_2$ at admissible levels, including $K_{-6/5}(\gs\gl_2)$, are studied in \cite{ACR}. They support Conjecture \ref{conj.dim1}, and modularity results also indicate that they allow for $C_2$-cofinite VOA-extensions. 
\end{remark}

For $n\geq 3$, let $\cW^{\ell}(\gs\gl_n,f_{\text{subreg}})$ be the $\cW$-algebra associated with a subregular nilpotent element $f_{\text{subreg}}$
at level $\ell$ \cite{KRW}. It is freely generated of type $\cW(1,2,\dots, n-1, n/2, n/2)$ and the weight one field generates a Heisenberg algebra $\cH$. The central charge is $$c = -\frac{((\ell +n)(n-1)-n)((\ell +n)(n-2)n - n^2+1)}{\ell+n}.$$ It was recently shown by Genra \cite{G} that 
$\cW^{\ell}(\gs\gl_n,f_{\text{subreg}})$ is isomorphic to 
the vertex algebra
$\cW^{(2)}_{n,\ell}$ introduced by 
Feigin and Semikhatov \cite{FS}. Note that for $n=1$ and $n=2$, $\cW^{(2)}_{n, \ell}$ is well-defined and coincides with the rank one $\beta\gamma$-system and the affine vertex algebra $V^{\ell}(\gs\gl_2)$, respectively.
We denote by $\cW_{\ell}(\gs\gl_n,f_{\text{subreg}})$ the unique simple quotient of $\cW^{\ell}(\gs\gl_n,f_{\text{subreg}})$.

Observe that $\cC_k(n)$ has central charge $$c = -\frac{(1 + k) (-1 + 2 k + n) (3 k + 2 n)}{(-1 + k + n) (k + n)},$$ which coincides with the central charge of $\text{Com}(\cH, \cW_{\ell}(\gs\gl_{k+1},f_{\text{subreg}}))$ for $\ell = - \frac{1 + k^2 + k n}{k + n}$. In the cases $k=0$ and $k=1$, this coincidence still holds where $\cW_{\ell}(\gs\gl_{k+1},f_{\text{subreg}})$ is now replaced by the $\beta\gamma$-system and $L_{\ell}(\gs\gl_2)$, respectively. Based on this observation, and the belief that vertex algebras with this central charge and one-dimensional associated variety are rare, we make the following conjecture.

\begin{conj} \label{typeAconj} For all integers $n\geq 4$ and $k\geq 0$, 
such that $n^2-k^2-1\geq n$,
$$\cC_k(n) \cong \text{Com}(\cH, \cW_{\ell}(\gs\gl_{k+1},f_{\text{subreg}})),\qquad \ell = - \frac{1 + k^2 + k n}{k + n}.$$ For $k=0$ and $k=1$, we replace $\cW_{\ell}(\gs\gl_{k+1},f_{\text{subreg}})$ above with the $\beta\gamma$-system and the simple affine vertex algebra $L_{\ell}(\gs\gl_2)$, respectively. 
\end{conj}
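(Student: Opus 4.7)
The plan is to reduce the conjecture to an application of the uniqueness theorem for minimal $\cW$-algebras (Theorem \ref{uniqueness}), applied uniformly in the pair $(n,k)$. Set $\cD_k(n) := \text{Com}(\cH, \cW_\ell(\gs\gl_{k+1}, f_{\text{subreg}}))$ with $\ell = -(1+k^2+kn)/(k+n)$, making the replacements for $k=0, 1$ stated in the conjecture. The aim is to construct a simple vertex superalgebra $\cA_k$ containing $L_{k+1}(\gg\gl_{n-2}) \otimes \cD_k(n)$ as a conformal subalgebra in such a way that conditions (1)--(4) of Theorem \ref{uniqueness} hold. Theorem \ref{uniqueness} will then force $\cA_k \cong \cW_k(\gs\gl_n, e_{-\theta})$, and taking the commutant of $L_{k+1}(\gg\gl_{n-2})$ (using Corollary \ref{simplicity}) identifies $\cC_k(n)$ with $\cD_k(n)$, proving the conjecture for every admissible pair $(n,k)$ simultaneously.

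I would proceed in four steps. First, numerical compatibility: using the Kac--Wakimoto formula and Genra's central-charge formula for $\cW^\ell(\gs\gl_{k+1}, f_{\text{subreg}})$, verify directly that $c(\cW_k(\gs\gl_n, e_{-\theta})) = c(L_{k+1}(\gg\gl_{n-2})) + c(\cD_k(n))$. The hypothesis $n^2 - k^2 - 1 \geq n$ enters to ensure that $\ell + (k+1) = (k+n-1)/(k+n)$ is positive, that the denominators appearing in the bilinear-form argument of Theorem \ref{uniqueness} remain nonzero, and that $\cW^\ell(\gs\gl_{k+1}, f_{\text{subreg}})$ contains no obstructive singular vectors of low weight. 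Second, identify the extension data: the weight $3/2$ subspace of $\cA_k$ must carry the $\gg\gl_{n-2}$-module $\mathbb{C}^{n-2} \oplus (\mathbb{C}^{n-2})^*$, so one seeks a pair of contragredient simple $\cD_k(n)$-modules $M^\pm$ of conformal weight $\Delta(M^\pm) = 3/2 - \Delta(L_{k+1}(\mathbb{C}^{n-2}))$; natural candidates arise as spectral-flow images of vacuum-type primaries in the Feigin--Semikhatov (equivalently subregular) presentation of $\cW^\ell(\gs\gl_{k+1}, f_{\text{subreg}})$.

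Third, construct $\cA_k$ as a simple vertex-algebra extension of the form $\bigoplus_\lambda L_{k+1,\lambda}(\gg\gl_{n-2}) \otimes N_\lambda$ of $L_{k+1}(\gg\gl_{n-2}) \otimes \cD_k(n)$, using the vertex-tensor-category machinery of \cite{HKL, CKM, CKL}: simple vertex-superalgebra extensions correspond to simple commutative (super)algebra objects in the Deligne product of the relevant module categories. Closure of the weight $3/2$ OPE in $\cA_k$ reduces to the fusion statement that $M^+ \boxtimes_{\cD_k(n)} M^-$ contains the vacuum of $\cD_k(n)$ as a direct summand with multiplicity one, together with the pairing data determining the third-order pole. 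Once $\cA_k$ is constructed and proven simple, all four hypotheses of Theorem \ref{uniqueness} are built into the construction, so $\cA_k \cong \cW_k(\gs\gl_n, e_{-\theta})$ follows at once.

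The principal obstacle is the tensor-categorical step for $k \geq 2$: at the fractional level $\ell = -(1+k^2+kn)/(k+n)$ the algebra $\cW_\ell(\gs\gl_{k+1}, f_{\text{subreg}})$ is neither rational nor $C_2$-cofinite, so the requisite fusion data for $\cD_k(n)$ is not accessible from standard modular-tensor-category technology. The natural source of this input is the theory of inverse Hamiltonian reduction and relative semi-infinite cohomology linking subregular $\cW$-algebras to affine $\gs\gl_{k+1}$-modules at related levels, combined with the Feigin--Semikhatov/Gaiotto--Rapcak triality framework. A secondary difficulty is verifying simplicity of $\cA_k$ after extension; the small cases $k=0$ (all $n$) and $(n,k) = (4,1)$, where the extension reduces to the $\beta\gamma$-system or $L_\ell(\gs\gl_2)$, serve as sanity checks and are directly accessible by the computational methods of Section \ref{sect:nongeneric}. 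If the full tensor-categorical treatment remains unavailable for some range of $(n,k)$, a deformable-family fallback is possible: since the structure constants of the strong generating set of $\cC^k(n)$ from Theorem \ref{thm:slncoset} are rational in $k$, matching finitely many of them against those of $\text{Com}(\cH, \cW^\ell(\gs\gl_{k+1}, f_{\text{subreg}}))$ would pin down the isomorphism on the generic locus and permit specialization to the nongeneric levels of the conjecture.
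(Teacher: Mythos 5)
The statement you are trying to prove is stated in the paper as a \emph{conjecture}, and the paper itself only establishes it in two regimes: for $k=0$ and all $n\geq 4$, by exhibiting $\cW_0(\gs\gl(n|m+2),e_{-\theta})$ concretely as the $U(1)$-orbifold $(\cE(n)\otimes\cS(m)\otimes\cA(1))^{U(1)}$ and then invoking Theorem \ref{uniqueness}; and for $(n,k)=(4,1)$, by computing an explicit decoupling relation in $\cI_1\subset\cC^1(4)$, showing $\cC_1(4)$ is of type $\cW(2,3,4,5)$, and matching its OPE algebra against $K^{-6/5}(\gs\gl_2)$ by computer. Your overall strategy --- build a simple extension $\cA_k$ of $L_{k+1}(\gg\gl_{n-2})\otimes\cD_k(n)$ and apply Theorem \ref{uniqueness} --- is exactly the philosophy the authors advocate in the introduction, but as written your argument is a program rather than a proof, and you concede the decisive point yourself: for $k\geq 2$ the algebra $\cW_\ell(\gs\gl_{k+1},f_{\text{subreg}})$ at the level in question is neither rational nor $C_2$-cofinite, so the existence of the modules $M^\pm$, the fusion statement that $M^+\boxtimes M^-$ contains the vacuum with multiplicity one, the construction of the commutative superalgebra object, and the simplicity of the resulting $\cA_k$ are all unproven inputs, not steps you have carried out.

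Two further gaps deserve naming. First, condition (3) of Theorem \ref{uniqueness} requires $\cA_k$ to be \emph{strongly generated} by the Virasoro field together with the weight $1$ and weight $3/2$ fields. Since $\cD_k(n)$ sits inside the vacuum sector and is of type $\cW(2,3,\dots)$ with strong generators up to weight roughly $n^2-2$, you must show that all of its higher-weight generators are normally ordered polynomials in the low-weight fields; the paper verifies precisely this for $k=0$ (the weight $3$ field $:X^+\partial X^-:$ of $\cW_{3,-2}$ is checked to decouple), and nothing in your proposal addresses it for general $k$. Second, your ``deformable-family fallback'' does not work as stated: matching structure constants of $\cC^k(n)$ against $\text{Com}(\cH,\cW^\ell(\gs\gl_{k+1},f_{\text{subreg}}))$ on the generic locus cannot be done uniformly in $n$ and $k$ (the number of generators and the weights grow), and even if it could, the conjecture concerns the \emph{simple} quotients at specific nongeneric levels, where the surjectivity of $\cC^k(n)\to\cC_k(n)$ gives you strong generators but not the normally ordered relations that actually characterize the simple algebra. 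In short, the conjecture remains open beyond the cases the paper settles, and your proposal correctly identifies where the difficulty lies without resolving it.
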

 
\begin{remark}
Conjecture \ref{typeAconj} fails if 
$k$ is large relative to $n$. Suppose that $n^2-k^2-1\geq n$.
As $n^2-k^2-1$ and $k+n$ are mutually prime,
$\ell$ is a nondegenerate  admissible number,
that is, the associated variety of $L_{\ell}(\mathfrak{sl}_n)$ is the nilpotent cone of $\mathfrak{sl}_n$ \cite{ArIII}.
Therefore, the associated variety of the simple Feigin-Semikhatov algebra at level $\ell$, or
the subregular $\cW$-algebra of $\gs\gl_n$ at level $\ell$, is the intersection of the nilpotent cone with the Slodowy slice at a subregular nilpotent element, which is isomorphic to the $A_n$-singularity \cite{ArIII}. In particular, it has the symplectic singularity and has dimension $2$.
\end{remark}

Since $\cW^{(2)}_{1,\ell}$ is isomorphic to the rank one $\beta\gamma$-system $\cS$ for all $\ell$, and $\text{Com}(\cH, \cS)\cong \cW_{3,-2}$ \cite{Wa}, this conjecture holds for $n=4$ and $k=0$ by Theorem \ref{thm:c04}. Similarly, since the simple Feigin-Semikhatov algebra $\cW^{(2)}_{2,\ell} \cong L_{\ell}(\gs\gl_2)$, our conjecture holds for $n=4$ and $k=1$ by Theorem \ref{thm:c14}. In the case, $n=4$ and $k=2$, $\text{Com}(\cH, \cW_{\ell}(\gs\gl_3, f_{\text{subreg}}))$ is of type $\cW(2,3,4,5,6,7)$ \cite{ACL}, and many (but not all) OPE relations have been checked by computer to coincide with the OPE relations in $\cC_2(4)$.

We now prove a more general statement using Theorem \ref{uniqueness} that implies our conjecture for $k=0$ and all $n\geq 4$. Let $\cE(n)$ be the rank $n$ $bc$-system with generators $b^i, c^i$, and let $\cS(m)$ be the rank $m$ $\beta\gamma$-system with generators $\beta^i, \gamma^i$. Let $\cA(1)$  be the rank one symplectic fermion algebra with generators $X^{\pm}$. Define a $U(1)$ action on $\cE(n) \otimes \cS(m) \otimes \cA(1)$ by 
\[
X^\pm \mapsto \lambda^{\pm 1} X^\pm, \qquad b^i \mapsto \lambda b^i, \qquad c^i \mapsto \lambda^{-1} c^i , \qquad \beta^j \mapsto \lambda \beta^j, \qquad \gamma^j \mapsto \lambda^{-1}\gamma^j
\]
Here the $1+n+m$ vectors $\{X^+,b^i, \beta^j\}$ and $\{X^-, c^i, \gamma^j \}$ should be viewed as carrying the standard and mutually conjugate representations of $\gg\gl(n+1|m)$, respectively. With this notation, we have the following result.
\begin{thm}
For all $(n, m)\neq (0,2)$,
$\cW_0\left(\gs\gl(n+2|m),e_{-\theta} \right) \cong \left(\cE(n) \otimes \cS(m) \otimes \cA(1)\right)^{U(1)}$. 
\end{thm}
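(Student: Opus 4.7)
The plan is to apply Theorem~\ref{uniqueness} to the pair $\cW^0(\gs\gl(n+2|m), e_{-\theta})$ and $\cR := (\cE(n) \otimes \cS(m) \otimes \cA(1))^{U(1)}$, and then to promote the resulting surjection to an isomorphism by verifying that $\cR$ is simple. The necessary structural input is that $(\gs\gl(n+2|m))^\natural \cong \gg\gl(n|m)$ (as a Lie superalgebra) and that $\gg_{1/2} \cong \mathbb{C}^{n|m} \oplus (\mathbb{C}^{n|m})^*$ with $2n$ even and $2m$ odd components; both follow from the standard description of the minimal grading on $\gs\gl$-type superalgebras.

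For the central charge, the Kac--Wakimoto formula at $k=0$ gives $c(\cW^0(\gs\gl(n+2|m), e_{-\theta})) = h^\vee - 4 = n - m - 2$, which matches $c(\cE(n)) + c(\cS(m)) + c(\cA(1)) = n + (-m) + (-2)$; the conformal vector on $\cR$ is simply the sum of the three standard Virasoros on the tensor factors. The $(n+m)^2$ bilinears $:b^i c^j:$, $:\beta^i\gamma^j:$, $:b^i\gamma^j:$, $:\beta^i c^j:$ span the $U(1)$-invariant weight-one subspace of $\cR$ and, via the standard supersymmetric boson-fermion realization, generate an affine $\gg\gl(n|m)$ vertex superalgebra; a direct second-order-pole computation on finitely many pairs then identifies the realized level with the level $k'$ at $k=0$ prescribed by Kac--Wakimoto. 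The weight-$3/2$ generators I take are $G^{+,w} := \ :X^+ w:$ with $w \in \{c^1,\dots,c^n,\gamma^1,\dots,\gamma^m\}$ and $G^{-,w} := \ :X^- w:$ with $w \in \{b^1,\dots,b^n,\beta^1,\dots,\beta^m\}$: these are $U(1)$-invariant of weight $3/2$, they split as exactly $2n$ even and $2m$ odd fields, and their OPEs with the weight-one bilinears realize the $\gg\gl(n|m)$-module $\mathbb{C}^{n|m} \oplus (\mathbb{C}^{n|m})^*$. They are primary of weight $3/2$ with respect to the sum-of-Virasoros conformal vector by construction.

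Strong generation of $\cR$ by $T$, the weight-one bilinears, and the $G^{\pm,w}$ reduces to the first fundamental theorem for $U(1) = \GL_1$: every $U(1)$-invariant monomial in the free-field generators $\{X^\pm, b^i, c^i, \beta^j, \gamma^j\}$ is a product of $U(1)$-matched pairs, each of which is either a listed weight-one or weight-$3/2$ generator, the weight-two element $:X^+X^-:$ (which is, up to sign, the Virasoro of $\cA(1)$, hence expressible as $T$ minus the Sugawara-type Virasoros of $\cE(n)$ and $\cS(m)$ that themselves lie in the weight-one bilinears), or a normally ordered product of the above together with their derivatives. All four hypotheses of Theorem~\ref{uniqueness} are then in place, yielding a surjection $\cW^0(\gs\gl(n+2|m), e_{-\theta}) \twoheadrightarrow \cR$. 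Finally, $\cE(n) \otimes \cS(m) \otimes \cA(1)$ is a simple vertex superalgebra and its fixed-point subalgebra under the compact connected group $U(1)$ is simple by a standard reductive-group orbifold argument (compare Corollary~\ref{simplicity}), so the surjection factors through $\cW_0(\gs\gl(n+2|m), e_{-\theta})$ to give the desired isomorphism.

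The main technical point I expect to be the obstacle is the level-matching in the second paragraph: bosonic and fermionic free-field realizations contribute with opposite signs to the super Killing-form level on $\gs\gl(n|m)$, and one must additionally track the diagonal Heisenberg direction inside $\gg\gl(n|m)$ with care. This is also the reason the case $(n,m)=(0,2)$ must be excluded: the corresponding algebra is $\gs\gl(2|2)$, for which $h^\vee = 0$ and the Kac--Wakimoto presentation is degenerate, forcing one to replace $\gs\gl(2|2)$ by $\gp\gs\gl(2|2)$ as in the relevant subsection of the paper. For all other $(n,m)$ the argument proceeds uniformly.
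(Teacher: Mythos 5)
Your overall strategy coincides with the paper's: realize the orbifold as a vertex algebra satisfying hypotheses (1)--(4) of Theorem \ref{uniqueness} at $k=0$, obtain a surjection from $\cW^0(\gs\gl(n+2|m),e_{-\theta})$, and conclude by simplicity of the orbifold. The central charge count, the identification of the weight-one and weight-$3/2$ fields with their parities, and the final simplicity step (which should be attributed to \cite{DLM}, not to Corollary \ref{simplicity}, which concerns commutants by rational subalgebras rather than orbifolds) are all in line with the paper.

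The genuine gap is in your strong generation step. Classical invariant theory for $U(1)$ only tells you that the orbifold is strongly generated by \emph{all} charge-zero quadratics $:\partial^a u^+\,\partial^b v^-:$, with arbitrary derivatives on the charged free fields; it does not reduce these to your finite list $\{T,\,:u^+v^-:,\,:X^\pm w:\}$. Your phrase ``or a normally ordered product of the above together with their derivatives'' conceals exactly the hard point: a derivative of $:X^+X^-:$ only produces the symmetric combination, so a field such as the weight-$3$ generator $:X^+\partial X^-:$ of $\cA(1)^{U(1)}\cong\cW_{3,-2}$ is \emph{not} obtained this way, and likewise the quadratics $:\partial^a b^i\,\partial^b c^j:$, $:\partial^a\beta^i\,\partial^b\gamma^j:$ with $a+b\geq 1$ are not derivatives of the weight-one bilinears. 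Their decoupling is precisely the content of the nontrivial theorems the paper invokes: $(\cE(n)\otimes\cS(m))^{U(1)}\cong L_1(\gg\gl(m|n))$ (\cite{KWIII,AP,CKLR}), i.e.\ strong generation of that factor's orbifold by its weight-one fields, together with an explicit check that $:X^+\partial X^-:$ is a normally ordered polynomial in the remaining generators (this uses the cross fields $:X^\pm w:$ and is a relation in the full orbifold, not in $\cW_{3,-2}$ alone, where the weight-$3$ field cannot be eliminated). Without these inputs, hypothesis (3) of Theorem \ref{uniqueness} --- strong generation by fields of weights $1,3/2,2$ only --- is unverified. A symptom that something is missing: your invariant-theory argument makes no use of the hypothesis $(n,m)\neq(0,2)$ on the orbifold side, yet in that case $\cS(2)^{U(1)}$ is strictly larger than the affine vertex algebra generated by its weight-one fields (as the paper remarks), so the weight-one strong generation that your reduction silently assumes actually fails there; the degeneracy of $\gs\gl(2|2)$ that you cite is only part of the reason for the exclusion.
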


\begin{proof}

Let $(n, m)\neq (0,2)$. It is well-known that $\left(\cE(n) \otimes \cS(m)\right)^{U(1)}\cong L_1\left(\gg\gl(m|n)\right)$. This was first shown in \cite{KWIII} for $m>0$, and then proved in the cases $n>2$ and $m=0$ \cite{AP}. An alternative proof can be found in the recent paper \cite{CKLR}.

Furthermore, $\cA(1)^{U(1)}$ is well known to be Zamolodchikov's $\cW_3$ algebra $\cW_{3,-2}$. It is clear that the generators of $L_1(\gg\gl(m|n))$ and $\cW_{3,-2}$, together with the fields $:c^iX^+:$ and $:b^i X^-:$, form a strong generating set for $\left(\cE(n) \otimes \cS(m)\right)^{U(1)}$. Also, it is easy to see that the dimension $3$ field of $\cW_{3,-2}$ (one can take $:X^+\partial X^-:$ for it) is a normally ordered polynomial in the other strong generators and their derivatives. 
So all conditions for Theorem \ref{uniqueness} are satisfied.
Finally, by \cite{DLM} the orbifold is simple.
\end{proof}
\begin{remark} If $(n,m) =(0,2)$ then the orbifold $\cS(2)^{U(1)}$ is larger than just $L_{-1}(\gg\gl_2)$. Since $\gg\gl(2|2)$ is not reductive, it is better to consider the simple quotient $\gp\gs\gl(2|2)$ of $\gg\gl(2|2)$. In that case, the minimal $\cW$-algebra is the small $N=4$ super Virasoro algebra and constructions of it are given in \cite{CKLR, Ad}.\end{remark}

\begin{cor} Let $$\cD_0(n,m) = \text{Com}(L_1(\gg\gl(n|m)), \cW_0\left(\gs\gl(n+2|m),e_{-\theta} \right)).$$ Then for all $(n,m) \neq (0,2)$, $$\cD_0(n,m) \cong \cA(1)^{U(1)} \cong \cW_{3,-2}.$$ In particular, $\cC_0(n) = \cD_0(n-2,0) \cong \cW_{3,-2}$ for all $n\geq 4$, so Conjecture \ref{typeAconj} holds for all $n\geq 4$ and $k=0$.
\end{cor}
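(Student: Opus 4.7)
The plan is to derive the corollary from the preceding theorem by a short commutant calculation. Under the identification $\cW_0(\gs\gl(n+2|m), e_{-\theta}) \cong (\cE(n) \otimes \cS(m) \otimes \cA(1))^{U(1)}$, the subalgebra $L_1(\gg\gl(n|m))$ corresponds to $(\cE(n) \otimes \cS(m))^{U(1)} \otimes 1$, so
\[
\cD_0(n,m) = \text{Com}\big(L_1(\gg\gl(n|m)) \otimes 1,\; (\cE(n) \otimes \cS(m) \otimes \cA(1))^{U(1)}\big).
\]
Writing $W = \cE(n) \otimes \cS(m) \otimes \cA(1)$, the subalgebra $L_1(\gg\gl(n|m)) \otimes 1$ involves only the first two tensor factors, so
\[
\text{Com}(L_1(\gg\gl(n|m)) \otimes 1,\; W) = \text{Com}(L_1(\gg\gl(n|m)),\; \cE(n) \otimes \cS(m)) \otimes \cA(1),
\]
and the desired commutant inside $W^{U(1)}$ is the intersection of this space with $W^{U(1)}$.

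The main step, which I expect to be the only one requiring any real work, is to show $\text{Com}(L_1(\gg\gl(n|m)), \cE(n) \otimes \cS(m)) = \mathbb{C}$. The Heisenberg $\cH$ generated by the $\gl$-current lies in $L_1(\gg\gl(n|m))$, and by the standard boson-fermion/Wakimoto-type identifications cited in the proof of the theorem one has $\text{Com}(\cH, \cE(n) \otimes \cS(m)) = L_1(\gs\gl(n|m))$ together with a splitting $L_1(\gg\gl(n|m)) = \cH \otimes L_1(\gs\gl(n|m))$ of vertex subalgebras. Iterating the commutant then reduces the problem to $\text{Com}(L_1(\gs\gl(n|m)), L_1(\gs\gl(n|m)))$, which equals $\mathbb{C}$ by a Schur-type argument since $L_1(\gs\gl(n|m))$ is a simple vertex algebra with one-dimensional degree-zero subspace.

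Granting this, $\text{Com}(L_1(\gg\gl(n|m)) \otimes 1, W) = \mathbb{C} \otimes \cA(1) = \cA(1)$, and intersecting with $W^{U(1)}$ yields $\cA(1)^{U(1)} \cong \cW_{3,-2}$, where the last isomorphism is noted in the proof of the theorem. This establishes $\cD_0(n,m) \cong \cW_{3,-2}$. Specializing $m = 0$ and relabeling $n \mapsto n-2$ gives $\cC_0(n) = \cD_0(n-2, 0) \cong \cW_{3,-2}$ for all $n \geq 4$; combined with the observation preceding Conjecture \ref{typeAconj} that $\cW^{(2)}_{1,\ell}$ is the rank one $\beta\gamma$-system $\cS$ and $\text{Com}(\cH, \cS) \cong \cW_{3,-2}$, this verifies Conjecture \ref{typeAconj} at $k=0$ for all $n \geq 4$. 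The only technical point requiring care is the identification $\text{Com}(\cH, \cE(n) \otimes \cS(m)) = L_1(\gs\gl(n|m))$ together with the tensor splitting of $L_1(\gg\gl(n|m))$ uniformly across all admissible $(n,m)$; this is standard but one must check conventions and any small-parameter degeneracies against \cite{KWIII, AP, CKLR}.
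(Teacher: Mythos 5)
Your route is essentially the paper's own: the corollary is meant to be read off from the preceding theorem by identifying the affine subalgebra $L_1(\gg\gl(n|m))$ with $(\cE(n)\otimes\cS(m))^{U(1)}\otimes 1$ inside $(\cE(n)\otimes\cS(m)\otimes\cA(1))^{U(1)}$ and computing the commutant, and your two formal steps ($\text{Com}(A\otimes 1, B\otimes C)=\text{Com}(A,B)\otimes C$, then intersecting with the $U(1)$-invariants, which preserves the tensor factor $\cA(1)$) are exactly that computation. The only substantive input is your claim $\text{Com}(L_1(\gg\gl(n|m)),\cE(n)\otimes\cS(m))=\mathbb{C}$.

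The weak point is how you justify that claim. The identity $\text{Com}(\cH,\cE(n)\otimes\cS(m))=L_1(\gs\gl(n|m))$ together with the splitting $L_1(\gg\gl(n|m))=\cH\otimes L_1(\gs\gl(n|m))$ is not available uniformly over the range $(n,m)\neq(0,2)$: for $(n,m)=(0,1)$ one has $\text{Com}(\cH,\cS(1))\cong\cW_{3,-2}$ by Wang's theorem \cite{Wa} (the very fact quoted before Conjecture \ref{typeAconj}), not an affine algebra, so your intermediate lemma and the reduction through it break down there; and for $n=m$ the diagonal Heisenberg has level $n-m=0$ and $\gs\gl(n|n)$ is not simple, so neither the splitting nor the self-commutant step applies as stated. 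A cleaner argument that works wherever the theorem's identification of the charge-zero sector with $L_1(\gg\gl(n|m))$ holds is to use that this level-one free-field realization is a conformal embedding (the Sugawara vector of $\gg\gl(n|m)$ equals the free-field Virasoro of $\cE(n)\otimes\cS(m)$, cf. \cite{KWIII}): any element of the commutant is then annihilated by the total $L_0$, hence lies in the weight-zero subspace $\mathbb{C}\mathbb{1}$. Equivalently, the nonzero-charge sectors are positive-energy non-vacuum modules and contain no vacuum-like vectors, while the self-commutant of the charge-zero sector is $\mathbb{C}$ because its weight-zero space is one-dimensional. For the case actually needed for the final assertion, namely $m=0$ and $n\geq 2$ (so $\cC_0(n)=\cD_0(n-2,0)$), your Heisenberg-coset route is fine, since $\text{Com}(\cH,\cE(n))$ is the lattice vertex algebra of the root lattice $A_{n-1}$, i.e.\ $L_1(\gs\gl_n)$; so the application to Conjecture \ref{typeAconj} at $k=0$ goes through as you wrote it.
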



\begin{thebibliography}{ACGHR}

\bibitem[Ad]{Ad} D. Adamovic, \textit{ A realization of certain modules for the N = 4 superconformal algebra and the affine Lie algebra $A_2^{(1)}$}, Transformation Groups, Vol. 21, No. 2 (2016) 299--327.
\bibitem[AdM]{AdM} D. Adamovic and A. Milas, \textit{On the triplet vertex algebra W(p)}, Adv. Math. 217 (2008) 2664--2699.
\bibitem[A-PI]{A-PI} D. Adamovic, V. G. Kac, P. M\"oseneder Frajria, P. Papi, O. Perse, \textit{Conformal embeddings of affine vertex algebras in minimal W-algebras I: structural results}, arXiv:1602.04687.
\bibitem[A-PII]{A-PII} D. Adamovic, V. G. Kac, P. M\"oseneder Frajria, P. Papi, O. Perse, \textit{Conformal embeddings of affine vertex algebras in minimal W-algebras II: decompositions},  arXiv:1604.00893.
\bibitem[A-R]{ACGHR} H. R. Afshar, T. Creutzig, D. Grumiller, Y. Hikida, P. B. Ronne, \textit{Unitary $\cW$-algebras and three-dimensional higher spin gravities with spin one symmetry}, J. High Energy Phys., (6), 063, 2014.
\bibitem[AP]{AP} D. Adamovic and O. Perse,  \textit{Fusion rules and complete reducibility of certain modules for affine Lie algebras}, Journal of Algebra and its Applications 13, 1350062 (2014).
\bibitem[AL]{AL} M. Al-Ali and A. Linshaw, \textit{The $\mathbb{Z}_2$-orbifold of the $\cW_3$-algebra}, Comm. Math. Phys. 353, No. 3 (2017), 1129-1150.
\bibitem[ArI]{Ara12} T. Arakawa, \textit{A remark on the {$C_2$} cofiniteness condition on vertex algebras}.
 { Math. Z.}, 270(1-2):559--575, 2012.
\bibitem[ArII]{ArI} T. Arakawa, \textit{Rationality of Bershadsky-Polyalov vertex algebras}, Commun. Math. Phys. 323 (2013), no. 2, 627-633.
\bibitem[ArIII]{ArIII} T. Arakawa, \textit{Associated varieties of modules over Kac-Moody algebras and $C_2$-cofiniteness of $\cW$-algebras}, Int. Math. Res. Not. (2015)  Vol.\ 2015 11605-11666.
\bibitem[ArIV]{ArII} T. Arakawa, \textit{Rationality of $\cW$-algebras: principal nilpotent cases}, Ann. Math. vol. 182, no. 2 (2015), 565-604.
\bibitem[ACL]{ACL} T. Arakawa, T. Creutzig, and A. Linshaw, \textit{Cosets of Bershadsky-Polyakov algebras and rational $\cW$-algebras of type $A$}, arXiv:1511.09143.
\bibitem[AM1]{AM1} T. Arakawa and A. Moreau, \textit{Joseph ideals and lisse minimal W-algebras}, J. Inst. Math. Jussieu, published online.
\bibitem[AM2]{AMsheet} T. Arakawa and A. Moreau,
\textit{Sheets and associated varieties of affine vertex algebras}, arXiv:1601.05906.
\bibitem[ACR]{ACR} J. Auger, T. Creutzig and D, Ridout, \textit{Modularity of logarithmic parafermion vertex algebras}, arXiv:1704.05168.
\bibitem[B]{B} R. Borcherds, \textit{Vertex operator algebras, Kac-Moody algebras and the monster}, Proc. Nat. Acad. Sci. USA 83 (1986) 3068-3071.
\bibitem[CGI]{CGI} T. Creutzig and T. Gannon, \textit{Logarithmic conformal field theory, log-modular tensor categories and modular forms}, arXiv:1605.04630.

\bibitem[CGII]{CGII}  T. Creutzig and T. Gannon, \textit{The theory of $C_2$-cofinite VOAs}, in preparation.
\bibitem[CKL]{CKL} T. Creutzig, S. Kanade, A. R. Linshaw, \textit{Simple current extensions beyond semi-simplicity}, arXiv:1511.08754.
\bibitem[CKLR]{CKLR} T. Creutzig, S. Kanade, A. Linshaw, and D. Ridout, \textit{Schur-Weyl duality for Heisenberg cosets}, arXiv:1611.00305.
\bibitem[CKM]{CKM} T. Creutzig, S. Kanade, R. McRae, \textit{Tensor categories for vertex operator superalgebra extensions}, arXiv:1705.05017.
\bibitem[CLI]{CLI} T. Creutzig and A. Linshaw, \textit{The super $\cW_{1+\infty}$ algebra with integral central charge}, Trans. Amer. Math. Soc. 367, No. 8, (2015) 5521-5551.
\bibitem[CLII]{CLII} T. Creutzig and A. Linshaw, \textit{Cosets of affine vertex algebras inside larger structures}, arXiv:1407.8512v4.
\bibitem[CLIII]{CLIII} T. Creutzig and A. Linshaw, \textit{Orbifolds of symplectic fermion algebras}, Trans. Amer. Math. Soc. 369, No. 1 (2017), 467-494.
\bibitem[CRW]{CRW} T. Creutzig, D. Ridout and S. Wood, \textit{Coset Constructions of Logarithmic (1,p)-Models}, Lett. Math. Phys. 104, 5 (2014) 553-583.
\bibitem[DeS]{DeS} A. De Sole, \textit{Vertex algebras generated by primary fields of low conformal weight}, Ph.D. Thesis, Massachusetts Institute of Technology, 2003.
\bibitem[DLY]{DLY} C. Dong, C. H. Lam and H. Yamada, \textit{W-algebras related to parafermion algebras}, J. Alg. 322, 7 (2009) 2366-2403.
\bibitem[DM1]{DM1} C. Dong and G. Mason, \textit{On quantum Galois theory}, Duke Math. J. 86 (1997), no. 2, 305--321. 
\bibitem[DM2]{DM2} C. Dong and G. Mason, \textit{Integrability of $C_2$-cofinite vertex operator algebras}, Int. Math. Res. Not. 2006, Art. ID 80468, 15 pp. 
\bibitem[DLM]{DLM} C. Dong, H. Li, and G. Mason, \textit{Compact automorphism groups of vertex operator algebras}, Internat. Math. Res. Notices 1996, no. 18, 913-921.
\bibitem[FBZ]{FBZ} E. Frenkel and D. Ben-Zvi, \textit{Vertex Algebras and Algebraic Curves}, Math. Surveys and Monographs, Vol. 88, American Math. Soc., 2001.

\bibitem[FS]{FS} B. Feigin and A. Semikhatov, \textit{$\cW_n^{(2)}$ algebras}, Nuclear Phys. B 698 (2004), no. 3, 409-449.
\bibitem[FHL]{FHL} I. B. Frenkel, Y-Z Huang, and J. Lepowsky, \textit{On axiomatic approaches to vertex operator algebras and modules}, Mem. Amer. Math. Soc. 104 (1993), no. 494, viii+64 pp.
\bibitem[FLM]{FLM} I. B. Frenkel, J. Lepowsky, and A. Meurman, \textit{Vertex Operator Algebras and the Monster}, Academic Press, New York, 1988.
\bibitem[FZ]{FZ} I. B. Frenkel and Y. C. Zhu, \textit{Vertex operator algebras associated to representations of affine and Virasoro algebras}, Duke Math. J, Vol. 66, No. 1, (1992), 123-168.
\bibitem[G]{G} N. Genra, \textit{Screening operators for $\cW$-algebras},
Sel.\ Math.\ New Ser.\ (2017),
published online.
\bibitem[GW]{GW} R. Goodman and N. Wallach, \textit{Symmetry, representations, and invariants}, Graduate Texts in Mathematics, 255. Springer, 2009.
\bibitem[HKL]{HKL} Y.-Z. Huang, A. Kirillov Jr.\ and J. Lepowsky, \textit{Braided tensor categories and extensions of vertex operator algebras}, Comm. Math. Phys. 337 (2015) 1143-1159.
\bibitem[K]{K} V. Kac, \textit{Vertex Algebras for Beginners}, University Lecture Series, Vol. 10. American Math. Soc., 1998.
\bibitem[KRW]{KRW} V. Kac, S. Roan, and M. Wakimoto, \textit{Quantum reduction for affine superalgebras}, Commun. Math. Phys. 241 (2003), no. 2-3, 307-342.
\bibitem[KWI]{KWI} V. Kac and M. Wakimoto, \textit{Quantum reduction and representation theory of superconformal algebras}, Adv. Math. 185 (2004), no. 2, 400-458.
\bibitem[KWII]{KWII} V. Kac and M. Wakimoto, \textit{Corrigendum to: \lq\lq Quantum reduction and representation theory of superconformal algebras"}, Adv. Math. 193 (2005), no. 2, 453-455.
\bibitem[KWIII]{KWIII} V. Kac and M. Wakimoto, \textit{Integrable highest weight modules over affine superalgebras and Appell's function}. Comm. Math. Phys. 215 (2001), no. 3, 631-682.

\bibitem[Ka]{Ka} K. Kawasetsu, \textit{$\cW$-algebras with non-admissible levels and the Deligne exceptional series}, Int. Math. Res. Notices (2016), rnw240.
\bibitem[LiI]{LiI} H. Li, \textit{Vertex algebras and vertex Poisson algebras}, Commun. Contemp. Math. 6 (2004) 61-110.
\bibitem[LI]{LI} A. Linshaw, \textit{Invariant theory and the $\cW_{1+\infty}$ algebra with negative integral central charge}, J. Eur. Math. Soc. 13, no. 6 (2011) 1737-1768.
\bibitem[LII]{LII} A. Linshaw, \textit{A Hilbert theorem for vertex algebras}, Transform. Groups, Vol. 15, No. 2 (2010), 427-448.
\bibitem[LIII]{LIII} A. Linshaw, \textit{Invariant theory and the Heisenberg vertex algebra}, Int. Math. Res. Notices, 17 (2012), 4014-4050.
\bibitem[LIV]{LIV} A. Linshaw, \textit{Invariant subalgebras of affine vertex algebras}, Adv. Math. 234 (2013), 61-84.
\bibitem[LV]{LV} A. Linshaw, \textit{The structure of the Kac-Wang-Yan algebra}, Commun. Math. Phys. 345, No. 2, (2016), 545-585.
\bibitem[MiI]{Mi} M. Miyamoto, \textit{$C_1$-cofiniteness and fusion products for vertex operators algebras}, Conformal Field Theories and Tensor Categories, Proceedings of a Workshop Held at Beijing International Center for Mathematical Research, Mathematical Lectures from Peking University, 271-279, 2014.
\bibitem[MiII]{Miy} M. Miyamoto, \textit{Modular invariance of vertex operator algebra satisfying $C_2$-cofiniteness}, Duke Math. J. 122 (2004) 51--91.
\bibitem[SI]{SI} A. Sergeev, \textit{An analog of the classical invariant theory for Lie superalgebras. I}, Michigan Math. J. 49 (2001), no. 1, 113-146.
\bibitem[SII]{SII} A. Sergeev, \textit{An analog of the classical invariant theory for Lie superalgebras. II}, Michigan Math. J. 49 (2001), no. 1, 147-168.
\bibitem[T]{T} K. Thielemans, \textit{A Mathematica package for computing operator product expansions}, Int. Jour. Mod. Phys. C2 (1991) p.787.
\bibitem[Wa]{Wa} W. Wang, \textit{$\mathcal{W}_{1+\infty}$-algebra, $\mathcal{W}_3$-algebra, and Friedan-Martinec-Shenker bosonization}, Comm. Math. Phys. 195 (1998), no. 1, 95-111.
\bibitem[W]{W} H. Weyl, \textit{The Classical Groups: Their Invariants and Representations}, Princeton University Press, 1946.
\bibitem[Zam]{Zam} A.B. Zamolodchikov, \textit{Infinite extra symmetries in two-dimensional conformal quantum field theory} (Russian), Teoret. Mat. Fiz. 65 (1985), 347-359. English translation, Theoret. and Math. Phys. 65 (1985), 1205-1213.
\bibitem[ZhI]{ZhI} C. J. Zhu, \textit{The complete structure of the nonlinear $W_4$ and $W_5$ algebras from quantum Miura transformation}, arXiv:hep-th/9306025.
\bibitem[ZhII]{ZhII} C. J. Zhu, \textit{The BRST quantization of the nonlinear $WB_2$ and $W_4$ algebras}, Nuclear Physics B418 (1994) 379-399.
\bibitem[Z]{Z} Y. C. Zhu, \textit{Modular invariants of characters of vertex operators}, J. Amer. Math. Soc. 9 (1996) 237-302.


\end{thebibliography}
\end{document}